\newtheorem*{T1}{Theorem~\ref{point is square}}
\newtheorem*{T2}{Theorem~\ref{square is discrete}}
\newtheorem*{T3}{Theorem~\ref{point is discrete}}
\newtheorem*{T4}{Theorem~\ref{Abrams theorem}}
\newtheorem{thm}{Theorem}[section]
\newtheorem{lem}[thm]{Lemma}
\newtheorem{prop}[thm]{Proposition}
\newtheorem{conj}[thm]{Conjecture}
\newtheorem{cor}[thm]{Corollary}
\newtheorem{exam}[thm]{Example}
\newtheorem{question}[thm]{Question}
\theoremstyle{definition}
\theoremstyle{definition}
\newtheorem{defn}[thm]{Definition}
\newtheorem{remark}[thm]{Remark}
\newtheorem*{claim*}{Claim}
\newtheorem*{quest*}{Question}
\newtheorem*{remark*}{Remark}
\newtheorem*{fact*}{Fact}
\newcommand{\thmtext}{
For $m\le n$, $g\ge 1$, and $b=0$ or $1$, let $K_{g,b}(n)$ be the cube complex defined in Definition \ref{K(n) definition}.
Then, if $d\le1$,
\[
SF_{m}\big(K_{g,b}(n),d\big)\simeq F_{m}\big(K_{g,b}(n)\big)\simeq F_{m}(\Sigma_{g,b}).
\]
}
\newcommand{\thmtextone}{
For $m\le n$, the subcomplex $DF_{m}\big(K_{g,0}(n)\big)$ is a deformation retract of $SF_{m}\big(K_{g,0}(n)\big)$; similarly, the subcomplex $DF_{m}\big(K^{*}_{g,1}(n)\big)$ is a deformation retract of $SF_{m}\big(K_{g,1}(n)\big)$.
}
\newcommand{\thmtexttwo}{
Let $\Sigma_{g,b}$ be a surface of genus $g\ge 1$ with $b=0$ or $1$ boundary components.
If $m\le n$, then 
\[
F_{m}(\Sigma_{g,0})\simeq DF_{m}\big(K_{g,0}(n)\big)\indent\text{and}\indent F_{m}(\Sigma_{g,1})\simeq DF_{m}\big(K^{*}_{g,1}(n)\big),
\]
where $K_{g,0}(n)$ and $K^{*}_{g,1}(n)$ are the square-tiled surfaces defined in Definition \ref{K(n) definition}.
}
\newcommand{\thmabrams}{
(Abrams \cite[Theorem 2.1]{abrams2000configuration})
Let $\Gamma$ be a connected graph, i.e., a $1$-dimensional cube complex, with at least $n$ vertices.
Then $F_{n}(\Gamma)$ deformation retracts onto $DF_{n}(\Gamma)$ if
\begin{enumerate}
    \item each path connecting distinct essential vertices of $\Gamma$ has length at least $n+1$, and
    \item each homotopically essential path connecting a vertex to itself has length at least $n+1$.
\end{enumerate}
}
\newcommand{\Z}{\ensuremath{\mathbb{Z}}}
\newcommand{\R}{\ensuremath{\mathbb{R}}}
\title{A discrete model for surface configuration spaces}
\author{Nicholas Wawrykow}
\date{}
\begin{document}
\maketitle
\begin{abstract}
One of the primary methods of studying the topology of configurations of points in a graph and configurations of disks in a planar region has been to examine discrete combinatorial models arising from the underlying spaces.
Despite the success of these models in the graph and disk settings, they have not been constructed for the vast majority of surface configuration spaces.
In this paper, we construct such a model for the ordered configuration space of $m$ points in an oriented surface $\Sigma$.
More specifically, we prove that if we give $\Sigma$ a certain cube complex structure $K$, then the ordered configuration space of $m$ points in $\Sigma$ is homotopy equivalent to a subcomplex of $K^{m}$.
\end{abstract}

\section{Introduction}

We introduce a discrete combinatorial model for the ordered configuration space of points in an oriented surface.
Namely, we show that if we give an oriented surface $\Sigma_{g,b}$ of genus $g$ with $b=0$ or $1$ boundary components the cube complex structure $K_{g,b}(n)$ defined in Definition \ref{K(n) definition}, then, for $m\le n$, the ordered configuration space of $m$ points in $\Sigma_{g,b}$, denoted $F_{m}(\Sigma_{g,b})$, is homotopy equivalent to a closed subcomplex of $\big(K_{g,b}(n)\big)^{m}$ called the \emph{$m^{\text{th}}$-discrete ordered configuration space} of $K_{g,b}(n)$.
Given a cellular complex $X$, this discrete configuration space, denoted $DF_{m}(X)$, is the subcomplex of $X^{m}$ consisting of $m$-fold products of cells in $X$ with disjoint closures, i.e.,
\[
DF_{m}(X):=\left\{\sigma_{1}\times \cdots\times \sigma_{m}\in X^{m}|\overline{\sigma_{i}}\cap \overline{\sigma_{j}}=\emptyset\text{ if }i\neq j\right\}.
\]
That is, we prove the following theorem describing an explicit $S_{m}$-equivariant CW-complex structure for $F_{m}(\Sigma_{g,b})$.

\begin{thm}\label{point is discrete}
\thmtexttwo
\end{thm}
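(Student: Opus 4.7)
The plan is to deduce Theorem \ref{point is discrete} by composing the two principal technical results of the paper, namely Theorem \ref{point is square} and Theorem \ref{square is discrete}. Concretely, for each $b\in\{0,1\}$ and each $m\le n$, I would first invoke Theorem \ref{point is square} with $d=1$ to obtain the chain of homotopy equivalences
\[
F_{m}(\Sigma_{g,b})\;\simeq\;F_{m}\bigl(K_{g,b}(n)\bigr)\;\simeq\;SF_{m}\bigl(K_{g,b}(n),1\bigr),
\]
where the first equivalence uses only the fact that $K_{g,b}(n)$ is a cube-complex structure on the surface $\Sigma_{g,b}$ and hence is homeomorphic to it. I would then apply Theorem \ref{square is discrete}: for $b=0$ this produces a deformation retraction of $SF_{m}(K_{g,0}(n))$ onto $DF_{m}(K_{g,0}(n))$, and for $b=1$ it produces a deformation retraction of $SF_{m}(K_{g,1}(n))$ onto $DF_{m}(K^{*}_{g,1}(n))$. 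Concatenating these equivalences gives the two homotopy equivalences claimed in the theorem. Because Theorem \ref{square is discrete} produces an honest deformation retract of a subcomplex of a product, and the equivalences supplied by Theorem \ref{point is square} are built from symmetric constructions on $(K_{g,b}(n))^{m}$, the resulting chain is automatically $S_{m}$-equivariant, which is the content needed to identify $DF_{m}(K_{g,0}(n))$ and $DF_{m}(K^{*}_{g,1}(n))$ as the explicit equivariant CW models advertised in the introduction.

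Because the argument is a direct compilation of Theorems \ref{point is square} and \ref{square is discrete}, there is no genuine obstruction inside the proof of Theorem \ref{point is discrete} itself; all of the work has been packaged into those two preceding statements. The main obstacle for the overall program is therefore one step earlier: Theorem \ref{point is square}, which presumably requires a carefully controlled deformation of $F_{m}(\Sigma_{g,b})$ onto the thickened discrete model $SF_{m}(K_{g,b}(n),d)$ by shrinking points and sliding them off of lower-dimensional strata without creating collisions. Theorem \ref{square is discrete} plays the role of a two-dimensional analogue of Abrams's theorem (Theorem \ref{Abrams theorem}) and will require the hypothesis $m\le n$ on the cube-complex structure to guarantee that any two cells used by distinct particles can be pushed to have disjoint closures; in the $b=1$ case the passage from $K_{g,1}(n)$ to $K^{*}_{g,1}(n)$ is presumably needed precisely to rectify the boundary squares where this disjointness would otherwise fail.
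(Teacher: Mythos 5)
Your proposal is correct and matches the paper's argument exactly: the paper states Theorem \ref{point is discrete} as the immediate composition of Theorem \ref{point is square} (with $d=1$) and Theorem \ref{square is discrete}, with no further work needed. Your parenthetical remarks about where the real content lies (the flow argument behind Theorem \ref{point is square} and the two-dimensional Abrams-style retraction behind Theorem \ref{square is discrete}, with the passage to $K^{*}_{g,1}(n)$ handling the boundary) are likewise consistent with the paper.
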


This model has two conspicuous upsides: First, it is amenable to the methods of discrete Morse theory, which have been used to great effect in the study of configurations of points in graphs and configurations of disks in planar regions; second, it allows one to easily visualize representatives of (co)homology classes in terms of the motion of particles about the surface.

Our method of constructing a model for configuration space is qualitatively different from previous approaches in that it is fundamentally geometric.
Our model, which takes its inspiration from Abrams's discrete model for graph configuration spaces, is constructed by considering disk configuration spaces.
Historically, the studies of graph configuration spaces, disk configuration spaces, and manifold configuration spaces have largely been disjoint; in this paper, we intend to connect these areas of research. 

\emph{A (very) brief history:} The study of configurations of points in a surface began over a century ago with Hurwitz's \cite{hurwitz1891riemann} and Artin's \cite{artin1925isotopie, artin1947theory} work on the (pure) braid groups.
Since then, configuration spaces have been studied via a wide variety of techniques.
Fadell and Neuwirth used fibrations to prove that surface configuration spaces are classifying spaces for the surface braid groups \cite[Theorem 1 and Corollary 2.2]{fadell1962configuration}.
Arnol'd used differential forms \cite{arnold1969cohomology} and Cohen used iterated loop spaces \cite{cohen2007homology} to find presentations for the cohomology ring of the ordered configuration space of points in the plane; McDuff \cite{mcduff1975configuration} and Segal \cite{segal1979topology} built on these ideas to prove that the homology groups of the unordered configuration space of a connected non-compact manifold stabilize as the number of points tends to infinity.
Church, Ellenberg, and Farb used spectral sequences and combinatorial categories to prove a representation-theoretic version of these stability results for the homology of the ordered configuration space of an oriented non-compact finite type manifold of dimension at least $2$ \cite{church2012homological, church2014fi, church2015fi}.
Despite these results---and the numerous others we omit for the sake of brevity---much is still unknown about surface configuration spaces, e.g., most of the Betti numbers of the ordered space of the torus remain unknown \cite[Conjecture 2.10]{pagaria2022asymptotic}.
As a result, we seek new ways to approach these spaces.

In the last thirty years, one approach to the problem of finding new methods has been to construct models for configuration spaces.
This paper is another contribution in this vein.
Totaro \cite{totaro1996configuration} and Kriz \cite{kriz1994rational} gave a model that corresponds to the $E^{2}$-page of a certain spectral sequence.
Alternatively, one can construct the Salvetti complex \cite{salvetti1994homotopy} for these spaces using the presentations for the surface pure braid groups given in \cite{bellingeri2001presentation}.
More recently, Campos, Idrissi, Lambrechts, and Willwacher used commutative differential graded algebras arising from Kontsevich's graph complexes to give real models for ordered configuration spaces, e.g., \cite{idrissi2019lambrechts, campos2019configuration, campos2018configuration,campos2023model}.
Though these models have provided great insight, there is still much to be learned about configuration spaces, prompting us to look elsewhere for inspiration.

\subsection{Connections to graph and square configuration spaces}

To prove Theorem \ref{point is discrete} we avoid most of the classical techniques used in the study of surface configuration spaces.
Instead, we draw much of our inspiration from two newer areas of research that are studied via broadly different methods, namely configurations of points in graphs and configurations of disks in planar regions.

Unlike manifold configuration spaces, the study of graph configuration spaces started merely twenty-five years ago with the work of Ghrist \cite{Ghr01} and Abrams \cite{abrams2000configuration}.
Using the underlying cube complex structure of a graph, Ghrist proved that graph configuration spaces are classifying spaces for the graph braid groups.
Abrams used this inherent cube complex structure in a slightly different way to give graph configuration space the structure of a cube complex, showing that if a graph $\Gamma$ is sufficiently subdivided, then the $m^{\text{th}}$-ordered configuration space of $\Gamma$ deformation retracts onto to the $m^{\text{th}}$-discrete ordered configuration space of $\Gamma$.
That is, Abrams proved the following theorem.

\begin{thm}\label{Abrams theorem}
\thmabrams  
\end{thm}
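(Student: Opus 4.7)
The plan is to produce an explicit deformation retraction of $F_{n}(\Gamma)$ onto $DF_{n}(\Gamma)$ by successively pushing apart pairs of particles whose occupied cells have overlapping closures. First, I would use the natural cube complex structure on $\Gamma^{n}$ coming from the $1$-dimensional cube complex structure of $\Gamma$; then $DF_{n}(\Gamma)$ is the full subcomplex whose cells are products $\sigma_{1}\times\cdots\times\sigma_{n}$ with pairwise disjoint closures, and $F_{n}(\Gamma)$ is the complement in $\Gamma^{n}$ of the topological fat diagonal.

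Second, for a configuration $(x_{1},\ldots,x_{n})\in F_{n}(\Gamma)$, let $c_{i}$ be the unique open cell containing $x_{i}$. The configuration lies in $DF_{n}(\Gamma)$ precisely when $\overline{c_{1}},\ldots,\overline{c_{n}}$ are pairwise disjoint, so the aim is to flow any configuration to one satisfying this condition. I would define a local rule that, whenever two particles $x_{i},x_{j}$ occupy cells sharing a vertex, slides one of them along its edge, away from the offending vertex, until the bad adjacency is resolved. The subdivision hypotheses are exactly what make such a slide always possible: condition (1) guarantees that whenever a particle sits near an essential vertex, there is an incident edge long enough to absorb the motion without colliding with a distant essential vertex, and condition (2) rules out the obstruction that would arise from a short essential loop forcing particles to pile up when pushed cyclically around it. Without these hypotheses one can arrange $n$ particles in a cluster that has no room to spread out.

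Third, I would globalize the local slides into a continuous deformation by fixing an orientation of every edge and an ordering of the particles, using these to select a canonical push direction for each bad pair, and then performing the slides simultaneously in cube coordinates. On each open cube of $\Gamma^{n}$ the prescription is piecewise linear, and the resulting maps agree on common boundary faces, so they assemble into a continuous homotopy $H:F_{n}(\Gamma)\times[0,1]\to F_{n}(\Gamma)$ that fixes $DF_{n}(\Gamma)$ pointwise and ends in $DF_{n}(\Gamma)$.

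The main obstacle is compatibility of the local slides across cube boundaries: as a particle crosses a vertex, its occupied cell changes discontinuously, and a naive definition of the retraction jumps. The cleanest way around this is to stratify $F_{n}(\Gamma)$ by the combinatorial type of the tuple $(c_{1},\ldots,c_{n})$, define the slide on each stratum so that when a particle enters a new cell the new push rule continues the previous one, and verify the gluing. Checking this gluing—and confirming that hypotheses (1) and (2) really do furnish the room each slide needs—is where the subdivision conditions enter decisively and where the technical heart of the proof lies.
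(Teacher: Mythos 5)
Your proposal aims for an explicit deformation retraction via local slides, which is a reasonable goal—Abrams himself could not write such a retraction for $n>2$—but as stated it has a real gap precisely where you locate it, and the gap is not closed by the stratification you sketch. The naive slide scheme requires, for every bad pair $(x_i,x_j)$, a choice of \emph{which} particle to move and \emph{which way} to push it, and that choice must vary continuously as the combinatorics change. You propose to fix it with an edge-orientation and a particle-ordering, and then ``define the slide on each stratum so that when a particle enters a new cell the new push rule continues the previous one, and verify the gluing.'' That sentence restates the problem rather than solving it: as a particle crosses a vertex of valence $\ge 3$, there is no canonical continuation of the push direction, and at a configuration where $x_i$ sits \emph{on} a vertex the two neighbouring strata prescribe genuinely different directions of motion. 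A second, unaddressed difficulty is cascading: pushing $x_i$ off a bad adjacency with $x_j$ can create a new adjacency with $x_k$, and with purely local combinatorial rules there is no monotone quantity guaranteeing the process terminates, let alone that it gives a homotopy over $[0,1]$.

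The approach outlined in the paper (Section~\ref{remarks and future directions}) avoids both issues by factoring the retraction through the square configuration space $SF_n(\Gamma,1)$. The first stage replaces discrete slides by a smooth flow: one constructs, on a neighbourhood of each configuration $\textbf{z}$ with $\theta_n(\textbf{z})<\tfrac{1}{2}$, a map that scales radially away from the nearest essential vertex (the graph analogue of the cone point $p$ in $K_{g,0}(n)$), and these local flows are averaged by a partition of unity; since each local field lies in the open cone $\mathrm{Cone}_{\textbf{z}}(\theta_n)$ of directions increasing the tautological function, so does their convex combination, and the resulting flow increases $\theta_n$ monotonically. This solves the cascade problem automatically, and continuity across cube boundaries is handled by the smooth averaging rather than by a combinatorial case analysis. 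The second stage, $SF_n(\Gamma,1)\to DF_n(\Gamma)$, retracts each partially contained product cell $\Sigma$ by pushing radially away from an explicitly constructed ``maximally bad'' point (the analogue of $\hat{\textbf{m}}-\hat{\textbf{b}}$ in Lemma~\ref{retract partially contained cell no boundary}); the inequalities of Proposition~\ref{local coordinate check boundary} govern which points of $\Sigma$ lie in $SF_n$ and certify that the radial push stays inside. This is precisely where hypotheses (1) and (2) enter: they bound path lengths so that the lifting (Lemma~\ref{section of small set no boundary} type arguments) and the diameter estimates behind $\theta_n$ go through. If you want to salvage the direct-slide idea, you would essentially be reinventing $\theta_n$ as the monotone invariant and the radial flow as the smoothed slide—at which point you have arrived at the paper's construction.
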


Outside of the case $n=2$, Abrams was unable to write down the deformation retract implied by Theorem \ref{Abrams theorem}.
Instead, Abrams proved that $DF_{n}(\Gamma)$ is naturally a subspace of $F_{n}(\Gamma)$, and showed that both spaces are connected and have the same fundamental group.
Since Ghrist proved that $F_{n}(\Gamma)$ is a classifying space for the pure braid group on $n$ strands in $\Gamma$ \cite[Corollary 2.4]{Ghr01}, by Whitehead's Theorem it suffices to show that the higher homotopy groups of $DF_{n}(\Gamma)$ are trivial.
Abrams did this via a graphs of spaces argument that does not translate to the surface configuration space setting.
As such, we need further inspiration to prove Theorem \ref{point is discrete}.

The spark comes from the even newer field of disk configuration spaces.
These disk configuration spaces arise when we consider a metric space $(X, g)$ and insist that all the points in a configuration in $X$ are sufficiently far from each other and from the boundary.
They were first studied asymptotically in \cite{diaconis2009markov, carlsson2012computational, baryshnikov2014min} in the late 2000s and early 2010s, and in the last decade these spaces have received a flurry of interest, e.g., \cite{alpert2020generalized, alpert2021configuration, alpert2021configuration1,  alpert2024asymptotic, wawrykow2023representation, wawrykow2022On}.
In particular, if $X$ is a surface with well-defined notions of left and right, up and down, as is the case for translation surfaces $(X, g)$, then there is a reasonable notion of an $l^{\infty}$-ball, or square, in $X$.
In the context of this paper, the resulting \emph{ordered square configuration spaces $SF_{m}(X, d)$}, where
\[
SF_{m}\big(X, d\big):=\left\{(z_{1},\dots, z_{m})\in X^{m}|d_{\infty}(z_{i}, z_{j})\ge d\text{ for all }i\neq j\text{ and }d_{\infty}(z_{i},\partial X)\ge \frac{d}{2}\text{ for all }i\right\},
\]
are especially important, as they serve as the missing link between point and discrete surface configuration spaces.

The papers of Plachta \cite{plachta2021configuration} and Alpert, Bauer, Kahle, MacPherson, and Spendlove \cite{alpert2023homology}, which discuss configurations of squares in rectangles, are of special import as they inspire two theorems that combine to yield Theorem \ref{point is discrete}.
Plachta proved that the homotopy type of the configuration space of squares in a rectangle is solely dependent on the size $d$ and multiplicity $m$ of the squares.
By giving the surface $\Sigma_{g, b}$ the square-tiled structure $K_{g,b}(n)$ defined in Definition \ref{K(n) definition}, and carefully handling its genus and singularities, we are able to prove the following theorem, a surface version of one of Plachta's results \cite[Theorem 18]{plachta2021configuration}.

\begin{thm}\label{point is square}
\thmtext
\end{thm}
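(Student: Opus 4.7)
The second equivalence $F_{m}(K_{g,b}(n))\simeq F_{m}(\Sigma_{g,b})$ is tautological: by Definition \ref{K(n) definition}, $K_{g,b}(n)$ is a square-tiled cube complex whose underlying topological space is $\Sigma_{g,b}$, so the identity map induces a homeomorphism of ordered configuration spaces. The substance of the theorem is therefore the first equivalence $SF_{m}(K_{g,b}(n),d)\simeq F_{m}(K_{g,b}(n))$, and my plan is to establish it by transferring Plachta's planar argument \cite[Theorem 18]{plachta2021configuration} to the square-tiled surface setting.

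The strategy factors through a one-parameter family. First I would show that for $0<d_{1}\le d_{2}\le 1$, the natural inclusion $SF_{m}(K_{g,b}(n),d_{2})\hookrightarrow SF_{m}(K_{g,b}(n),d_{1})$ is a homotopy equivalence, producing a homotopy inverse by an explicit inflation that continuously grows the $l^{\infty}$-balls of side length $d_{1}$ around the configuration points up to side length $d_{2}$, pushing centers apart whenever two squares are on the verge of collision. Both hypotheses of the theorem enter here: the bound $d\le 1$ keeps every inflating square inside the closed star of its center, so the deformation can be defined by purely local reasoning on $K_{g,b}(n)$, while the bound $m\le n$ supplies enough global room on $K_{g,b}(n)$ to carry out the $m$ inflations simultaneously and disjointly.

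Once this monotonicity is in hand, the equivalence with $F_{m}(K_{g,b}(n))$ is assembled as a colimit. Every configuration in $F_{m}(K_{g,b}(n))$ has strictly positive minimum pairwise $l^{\infty}$-distance and strictly positive distance from the boundary, so $F_{m}(K_{g,b}(n))=\bigcup_{k\ge 1}SF_{m}(K_{g,b}(n),1/k)$. Each successive inclusion in this filtration is a homotopy equivalence by the first step, and each stage sits as a closed NDR-subspace of $F_{m}(K_{g,b}(n))$, so a standard colimit lemma identifies the homotopy type of the union with that of any single term $SF_{m}(K_{g,b}(n),d)$ with $d\le 1$.

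The main obstacle will be defining the inflation continuously across the identifications of $K_{g,b}(n)$. Unlike a planar rectangle, the square-tiled surface has identified edges and, for $g\ge 2$, a cone singularity at the vertex where all tile corners meet; the inflating squares must be routed to avoid each other and to avoid the singular vertex in a way that breaks continuity. I anticipate building the inflation piecewise using the flat translation structure on the smooth part of $K_{g,b}(n)$, excising a small open neighborhood of the cone point (permitted because $m\le n$ leaves combinatorial slack), and patching across identified edges by parallel transport. Checking that this patched construction is continuous on all of $SF_{m}(K_{g,b}(n),d_{1})$, and that the reverse-time inflation is a genuine deformation retract onto $SF_{m}(K_{g,b}(n),d_{2})$, is the technical crux of the argument.
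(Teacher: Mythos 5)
Your overall skeleton matches the paper's: both proceed via a filtration by $SF_{m}\big(K_{g,b}(n),d\big)$, show the inclusions are homotopy equivalences, and pass to a limit since $F_{m}\big(K(n)\big)=\bigcup_{k}SF_{m}\big(K(n),1/k\big)$. The identification $F_{m}\big(K_{g,b}(n)\big)\simeq F_{m}(\Sigma_{g,b})$ is likewise dismissed as immediate in both.

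However, the technical heart of the theorem is exactly what you label ``the crux'' and leave unfinished, and the strategy you sketch there diverges from the paper in a way that would break. You propose to \emph{excise} a neighborhood of the cone point $p$ and build the inflation on the flat remainder. But configurations in $F_{m}\big(K_{g,0}(n)\big)$ can contain $p$ or cluster arbitrarily close to it, so removing a neighborhood of $p$ changes the space you are retracting and loses precisely the configurations that are hardest to separate. The paper does the opposite: it puts $p$ at the \emph{center} of the construction, lifting a small neighborhood of a configuration to $\tilde{K}_{g,0}(n)$ via the sections of Lemma \ref{section of small set no boundary} and then radially expanding about the origin (which maps to $p$) by the map $\hat{\phi}_{R}(\hat{z})=t_{R}\hat{z}$. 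This simultaneously pushes tight clusters apart and pushes points away from $p$, which is why the degenerate tangent space at $p$ causes no difficulty (Remark \ref{no need to worry about tangent space}).

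Two further gaps. First, ``pushing centers apart whenever two squares are on the verge of collision'' is not a well-defined procedure; the paper spends Propositions \ref{nearby points have similar contact graphs} and \ref{move points away from boundary} and Lemma \ref{increase tautological function} introducing contact graphs, open cones $\text{Cone}_{\textbf{z}}(\theta_{m})$, and separate radial expansions for each connected component of the contact graph, glued by a partition of unity (Lemma \ref{deformation retract for non crit values}), precisely to make this continuous and consistent. Second, you do not address the endpoint $d=1$: the paper only proves that $r\in(0,\tfrac{1}{2})$ are regular values of $\theta_{m}$ (Corollary \ref{small regular values}), and extending the retraction to $r_{2}=\tfrac{1}{2}$ requires the affine Morse--Bott structure of $\theta_{m}$ (Propositions \ref{distanct function between two points is affine Morse--Bott}--\ref{tautological function is affine Morse--Bott} and Lemma \ref{retract from unit square to small square}), a piece of machinery your outline omits entirely.
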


In Alpert, Bauer, Kahle, MacPherson, and Spendlove's paper they prove that the configuration space $m$ of squares in a rectangle can be discretized in a manner similar to graph configuration spaces \cite[Theorem 3.5]{alpert2023homology}.
We prove that this is also true for the configuration space of $m$ unit-squares in $K_{g,b}(n)$ provided $m\le n$.

\begin{thm}\label{square is discrete}
\thmtextone
\end{thm}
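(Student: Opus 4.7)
The plan is to adapt the deformation-retraction argument of Alpert, Bauer, Kahle, MacPherson, and Spendlove in \cite[Theorem 3.5]{alpert2023homology}, which constructs a retract from the configuration space of unit squares in a rectangle onto its discrete subcomplex via an explicit coordinate-wise straight-line homotopy. The goal is to realize an analogous homotopy on the square-tiled surface $K_{g,b}(n)$, exploiting the fact that, away from its cone points and (when $b=1$) from the boundary, $K_{g,b}(n)$ is locally Euclidean, so the Alpert et al.\ argument applies in local charts.

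First I would analyze the local combinatorics. Given a configuration $(z_{1},\ldots,z_{m})\in SF_{m}(K_{g,b}(n),1)$, let $\sigma_{i}$ denote the open cell of $K_{g,b}(n)$ containing $z_{i}$. The $l^{\infty}$-distance constraint $d_{\infty}(z_{i},z_{j})\ge 1$, combined with the unit-square geometry of the tiling, already rules out many cell-incidence patterns---for instance, $z_{i}$ cannot be at a vertex $v$ while $z_{j}$ lies in a cell whose closure contains $v$, because every such cell is within $l^{\infty}$-distance less than $1$ of $v$. The "bad" configurations in $SF_{m}\setminus DF_{m}$ are exactly those in which some pair of cells $\overline{\sigma_{i}}$ and $\overline{\sigma_{j}}$ still meets, and the retract must move each such pair into disjoint-closure position.

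Second I would define the retract by running the Alpert et al.\ coordinate-wise push in local Euclidean charts around each $z_{i}$: each configuration point slides monotonically toward a canonical position, chosen (for instance lexicographically within the chart) so that the incidences causing the failure to lie in $DF_{m}$ are resolved one by one. Because each individual push is compactly supported near a single $z_{i}$ and moves each coordinate monotonically, the pairwise $l^{\infty}$-constraints and (in the $b=1$ case) the boundary constraint are preserved throughout the homotopy. At time $1$ the configuration lies in $DF_{m}(K_{g,0}(n))$ or $DF_{m}(K^{*}_{g,1}(n))$ as appropriate, and the straight-line nature of the homotopy makes it a deformation retraction.

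The main obstacle will be verifying that the local retracts glue coherently across the surface. Two genuinely new features not present in the Alpert et al.\ rectangle argument arise: the cone points of $K_{g,b}(n)$, where multiple squares meet in a non-Euclidean way, and (when $b=1$) the interaction with $\partial\Sigma_{g,1}$, which is precisely why one must work with the modified complex $K^{*}_{g,1}(n)$ rather than $K_{g,1}(n)$. The hypothesis $m\le n$ is what makes the gluing possible: it guarantees that the tiling is fine enough that no configuration point is forced to move across a cone point or between inconsistently-matched local charts in order to reach $DF_{m}$, so the purely local pushes assemble into a well-defined global deformation retract.
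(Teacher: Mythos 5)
Your high-level plan---adapt the Alpert--Bauer--Kahle--MacPherson--Spendlove collapse cell by cell, handle the cone point and boundary as new features, and invoke $m\le n$ to control the non-Euclidean locus---is the right skeleton, but the mechanism you describe for the retract would not work, and it misses the technical content that actually makes the paper's argument go through.

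First, the retract cannot be built from ``pushes in local Euclidean charts around each $z_i$ toward a canonical (e.g., lexicographic) position.'' The constraint defining $SF_m$ is a pairwise $l^\infty$-constraint $d_\infty(z_i,z_j)\ge 1$, so a push must coordinate the motion of \emph{pairs} (and by transitivity, whole clusters) of close-together points. Sliding two nearby points independently toward lexicographic canonical positions can easily decrease their $l^\infty$-distance, and ``monotone in each coordinate'' does not prevent that. The paper instead works with cells $\Sigma=\sigma_1\times\cdots\times\sigma_m$ of the \emph{product} $\big(K_{g,0}(n)\big)^m$ or $\big(K^*_{g,1}(n)\big)^m$ that are only partially contained in $SF_m$, and for each such cell builds a single ray direction (from a ``maximally bad point'' $\mathbf{m}-\mathbf{b}$, constructed from the barycenter) whose flow simultaneously increases all the relevant inequalities; these cell-by-cell collapses are then composed in order of non-increasing dimension. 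That the barycenter of a partially contained cell is a valid square configuration (Proposition~\ref{barycenter in square}) is what seeds the construction.

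Second, the cone point is not something the hypothesis $m\le n$ lets you \emph{avoid}; the retract does move cells whose closure contains $p$, and the genuine work of the proof is determining what the constraint $d_\infty(z_i,z_j)\ge 1$ looks like in those cells. Proposition~\ref{well-defined lift} uses $m\le n$ together with the injectivity-radius bound so that each component of $\bigcup_i\overline{\sigma_i}$ lifts to $\tilde{K}_{g,0}(n)$, and Proposition~\ref{local coordinate check boundary} shows that, after projecting to $\R^2$, the defining inequalities near $p$ are of a qualitatively different form (of the shape $|\hat{x}_k+\hat{x}_l|\ge 1$ rather than $|\hat{x}_k-\hat{x}_l|\ge 1$). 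The ray-push in Lemma~\ref{retract partially contained cell no boundary} is then verified against \emph{all} of these inequality types, including the cone-point ones. Your proposal contains no analogue of this inequality analysis and no mechanism for the cone-point cells; saying the tiling is fine enough that ``no configuration point is forced to move across a cone point'' mischaracterizes the role of $m\le n$, which is to make the polar-coordinate lifts of Lemmas~\ref{section of small set no boundary} and~\ref{section of small set with boundary} available, not to keep the retract away from $p$.
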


Plachta \cite{plachta2021configuration} and Alpert, Bauer, Kahle, MacPherson, and Spendlove \cite{alpert2023homology} heavily leaned on the fact that their underlying spaces are rectangles, hence can be embedded in $\R^{2}$, in the proofs of their theorems, something that is not true for all other surfaces.
To circumvent this issue, we use a version of polar coordinates to derive our results, using the fact that we can treat the sole singularity of $K_{g,0}(n)$ as if it were the origin.
Using these coordinates allows us to a find a flow on the ``tangent space'' of $K_{g,b}(n)$, a vital step in our proof of Theorem \ref{point is square}.
We also use these polar coordinates to find inequalities that allow us to determine if a square configuration lies in our discrete configuration space.

Beyond yielding Theorem \ref{point is discrete}, our geometric argument can be used to recover Theorem 2.1 of \cite{abrams2000configuration}, giving an explicit retraction of $F_{m}(\Gamma)$ onto $DF_{m}(\Gamma)$.
Moreover, we argue that it can be adapted to give discrete models for the configuration space of points in certain manifolds of dimension at least $3$.

\subsection{Outline}
We begin Section \ref{cube complexes} by recalling the definition of a cubical complex.
Next, we define the cube complex structure $K_{g,b}(n)$ on the surface $\Sigma_{g,b}$ that we use throughout the paper.
We make several remarks about the geometry of this complex and prove Lemma \ref{section of small set no boundary}, showing that we can parametrize small subspaces of $K_{g,b}(n)$ via a version of polar coordinates.
In Section \ref{square configuration spaces} we define our square configuration spaces by considering what we call the $l^{\infty}$-metric on $K_{g,b}(n)$.
Next, we recall Plachta's definition of the tautogical function $\theta_{m}$ on spaces with an $l^{\infty}$-metric, and determine its regular values on $\big(K_{g,b}(n)\big)^{m}$ via the parametrization given by Lemma \ref{section of small set no boundary}.
We use this to prove that the point configuration space of $\Sigma_{g,b}$ can be retracted onto the square configuration space of $K_{g,b}(n)$, i.e., we prove Theorem \ref{point is square}.
Section \ref{discrete configuration spaces} contains our proof of Theorem \ref{square is discrete}, showing that the configuration space of unit-squares in $K_{g,0}(n)$ can be retracted to the discrete configuration of $K_{g,0}(n)$, completing the proof of Theorem \ref{point is discrete}.
Finally, in Section \ref{remarks and future directions} we show how our proof Theorem \ref{point is discrete} can be adapted to give an explicit version of Abrams's Theorem 2.1 in \cite{abrams2000configuration}.
We give several suggestions of how discrete Morse theory might be used in conjunction with Theorem \ref{point is discrete} to determine the (co)homology of ordered surface configuration spaces.
Additionally, we suggest how our model might be generalized to other cube complex structures on surfaces and higher dimensional spaces.

\subsection{Acknowledgements}
The author owes a special debt of gratitude to Matthew Kahle and Bradley Zykoski for their suggestions and insights.
Additionally, this work greatly benefited from conversations with Karen Butt, Nir Gadish, Jes\'{u}s Gonz\'{a}lez, Ben Knudsen, and Shmuel Weinberger.
This research was supported in part by an AMS-Simons travel grant.

\section{Cube Complexes}\label{cube complexes}
Our process for discretizing $F_{m}(\Sigma_{g,b})$ begins with finding an appropriate cellular decomposition for the surface $\Sigma_{g,b}$.
To do this, we recall the definition of a special type of cellular complex called a cube complex.
While one can give $\Sigma_{g,b}$ a number of different cube complex structures, we will focus on one family of these structures, which we denote by $K_{g,b}(n)$, whose simple combinatorial and geometric nature will prove to be especially attractive for our purposes.
In particular, we prove that there exists a useful notion of polar coordinates on $K_{g,b}(n)$ in Lemmas \ref{section of small set no boundary} and \ref{section of small set with boundary}; this idea will play a central role throughout the paper.

One of the first constructions one learns when studying algebraic topology is that of a simplicial complex.
Such a complex arises when one glues together triangles (and their higher dimensional analogues) along their sides assuming a few minor restrictions.
Unfortunately, the product of two simplices tends not to be a simplex, making it hard to give a natural combinatorial description of a triangulation of the product of a pair of simplicial complexes. 
If we replace simplices with cubes, taking products becomes much simpler, as a product of two cubes is a cube, and it is easy to name the cells in this product.
We say that a \emph{cubical complex} (or \emph{cube complex}) $K$ is a disjoint union of cubes $X=\bigsqcup_{\lambda\in \Lambda}[0,1]^{k_{\lambda}}$ quotiented by an equivalence relation $\sim$.
The restrictions $p_{\lambda}:[0,1]^{k_{\lambda}}\to K$ of the natural projection map $p:X\to K=X/\sim$ are required to satisfy 
\begin{enumerate}
\item for every $\lambda\in \Lambda$ the map $p_{\lambda}$ is injective, and
\item if $p_{\lambda}\big([0,1]^{k_{\lambda}}\big)\cap p_{\lambda'}\big([0,1]^{k_{\lambda'}}\big)\neq\emptyset$, then there is an isometry $h_{\lambda, \lambda'}$ from a face $T_{\lambda}$ of $[0,1]^{k_{\lambda}}$ to a face $T_{\lambda'}$ of $[0,1]^{k_{\lambda'}}$ such that $p_{\lambda}(x)=p_{\lambda'}(x')$ if and only if $h_{\lambda, \lambda'}(x)=x'$.
\end{enumerate}

In this paper we concern ourselves with certain cube complexes called \emph{square-tiled surfaces}.
These cube complexes are orientable connected surfaces obtained from a finite collection of
unit-squares in $\R^{2}$ after identifications of pairs of parallel sides via translations.
In particular, we are concerned with two families of square-tiled surfaces, which we denote $K_{g,0}(n)$ and $K_{g,1}(n)$, that are homotopy equivalent to the abstract surfaces $\Sigma_{g,0}$ and $\Sigma_{g,1}$, respectively.
\begin{defn}\label{K(n) definition}
Let \emph{$K_{0,1}(n)$} denote the surface homotopy equivalent to $\mathbb{D}^{2}$ that arises from the natural square tiling of the region $[0,n]\times [0,n]$ in $\R^{2}$ by $1\times 1$ squares.

For $g\ge1$ and $b=0$, we define \emph{$K_{g,0}(n)$} to be the surface homotopy equivalent to $\Sigma_{g,0}$ that is obtained by tiling the region $[0, (2g-1)\times(n+1)]\times[0,n+1]$ of $\R^{2}$ with $1\times 1$ squares in the natural way, and identifying the intervals $\{0\}\times[0,n+1]$ and $\{(2g-1)\times(n+1)\}\times [0,n+1]$, as well as the pairs of intervals
$[i(n+1), (m+1)(n+1)]\times \{0\}$ with $[(2g-2-i)(n+1), (2g-1-i)(n+1)]\times\{n+1\}$ for $i\in \{0,1,\dots 2g-2\}$, e.g., Figure \ref{fig:K20(3)}.
We write \emph{$p$} for the point in $K_{g,0}(n)$ that is identified with $\{i(n+1)\}\times \{0\}$ and $\{i(n+1)\}\times \{n+1\}$ for $i\in \{0,\dots, 2g-1\}$.

\begin{figure}[h]
    \centering
    \includegraphics[width=.75\linewidth]{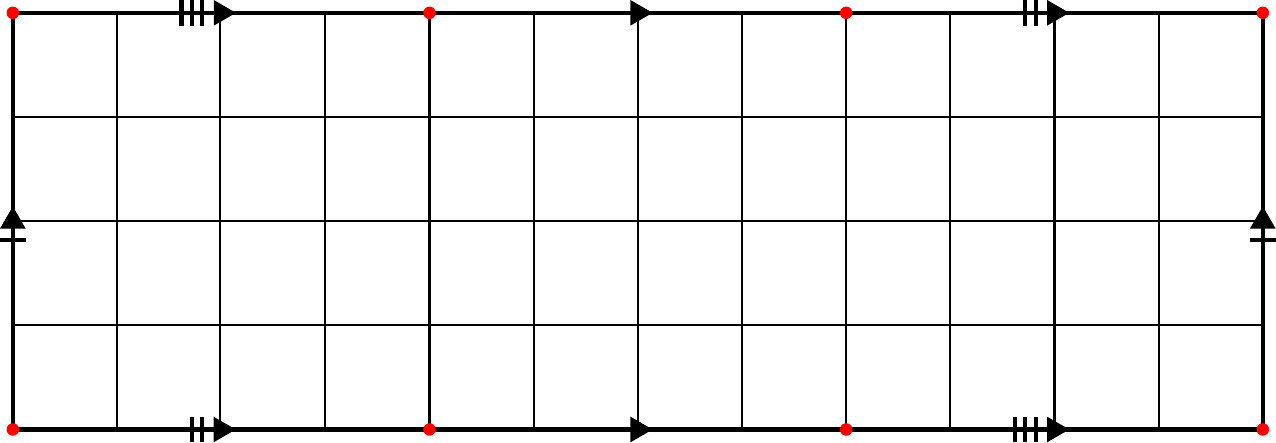}
    \caption{
    The cube complex structure $K_{2,0}(3)$ on $\Sigma_{2,0}$.
    We have highlighted the sole singular point in red. 
    Note that we have a different indexing convention for surfaces with non-zero genus than we do for the plane.
    We do this to ensure that if $m\le n$, the $m^{\text{th}}$-discrete ordered configuration space of $K_{g,0}(n)$ is homotopy equivalent to the $m^{\text{th}}$-ordered configuration space of points in $K_{g,0}(n)$.
    }
    \label{fig:K20(3)}
\end{figure}

Finally, we define \emph{$K_{g, 1}(n)$} to be the square-tiled surface homotopy equivalent to $\Sigma_{g,1}$ that arises from removing the open $l^{\infty}$-balls of radius $\frac{1}{2}$ around the points $\{i(n+1)\}\times \{0\}$ and $\{i(n+1)\}\times \{n+1\}$ for $i\in \{0,\dots, 2g-1\}$ from the rectangle $[0, (2g-1)\times(n+1)]\times[0,n+1]\subset \R^{2}$, and identifying the interval $\{0\}\times[\frac{1}{2},n+\frac{1}{2}]$ with the interval $\{(2g-1)\times(n+1)\}\times [\frac{1}{2},n+\frac{1}{2}]$, as well as the pairs of intervals $[i(n+1)+\frac{1}{2}, (m+1)(n+1)-\frac{1}{2}]\times \{0\}$ and $[(2g-2-i)(n+1)+\frac{1}{2}, (2g-1-i)(n+1)-\frac{1}{2}]\times\{n+1\}$ for $i\in \{0,1,\dots 2g-2\}$, e.g., Figure \ref{fig:K21(3)}.
Note that the unit-squares constituting $K_{g,1}(n)$ are centered at the integer lattice in $\R^{2}$.

\begin{figure}[h]
    \centering
    \includegraphics[width=.75\linewidth]{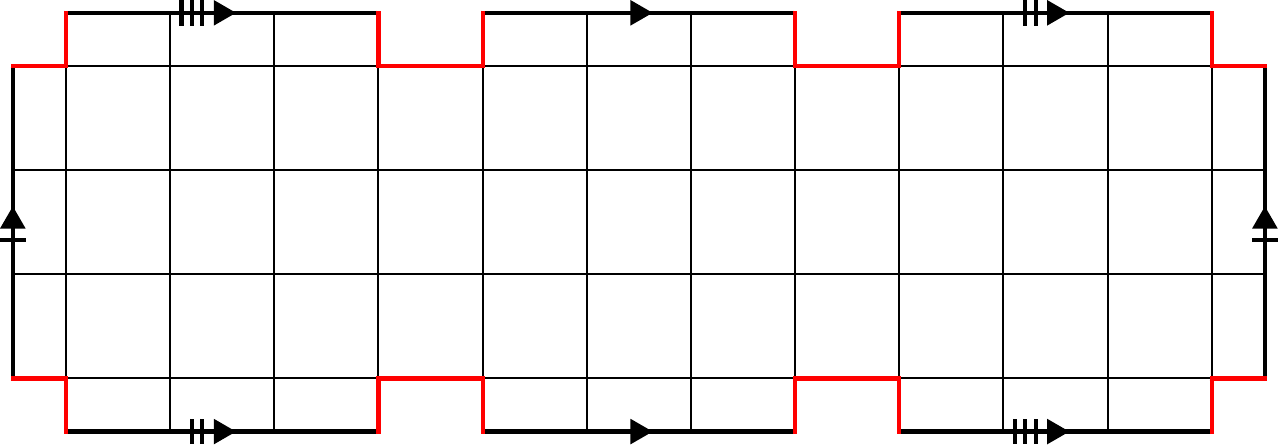}
    \caption{The cube complex structure $K_{2,1}(3)$ on $\Sigma_{2,1}$.
    The boundary of $K_{2,1}(3)$ is colored red.
    Note the thick black lines that are identified are not part of the cube complex structure, though we draw them to emphasize the relation between $K_{g,1}(n)$ and $K_{g,0}(n)$; see Figure \ref{fig:K20(3)}.
    }
    \label{fig:K21(3)}
\end{figure}
\end{defn}

Note that $K_{0,1}(n)$ is the $n\times n$ rectangle studied in \cite{plachta2021configuration, alpert2023homology}.
We will write $K(n)$ for $K_{g,0}(n)$ and $K_{g,1}(n)$ if the genus and number of boundary components are not vital to the situation.

The Euclidean metric on $\R^{2}$ induces a Euclidean metric on $K(n)$ away from $p$, the point of nonzero curvature in $K_{g,0}(n)$; moreover, since these are translation surfaces, there are reasonable notions of up and down, left and right at all other points in $K(n)$.
It follows that, there is a well-defined path metric 
on $K(n)$, and we can consider the vertical and horizontal lengths $l_{vert}$ and $l_{hor}$ of any path $\gamma$.
With this in mind, we get another metric on $K(n)$, which we call the \emph{Chebyshev} or \emph{$l^{\infty}$-metric}, and we write $d_{\infty}$ for the corresponding distance function on $K(n)$.
Given two points $z, z'$ in $K(n)$, we set
\[
d_{\infty}(z, z'):=\inf_{\gamma}\Big\{\max\big\{l_{vert}(\gamma), l_{hor}(\gamma)\big\}\Big\},
\]
where the infimum is taken over all paths $\gamma$ between $z$ and $z'$.
We will be particularly interested in the open ball of radius $R$ centered at $z$ with respect to this $l^{\infty}$-metric, i.e., the set of points $z'$ such that the Chebyshev distance between $z$ and $z'$ is less than $r$:
\[
B_{R}(z):=\{z'|d_{\infty}(z, z')<R\}.
\]
Often we will call these balls open \emph{$2R$-squares} as if $d_{\infty}(z, p)<R$, the ball $B_{R}(z)$ is isometric to a Euclidean square of side length $2R$.
If $z$ is Chebyshev distance less than $R$ from $p$ and $R$ is relatively small, then these balls look like stars, see Figure \ref{fig:ballsinK20(3)}, though we will still call them squares.
\begin{figure}[h]
    \centering
    \includegraphics[width=.75\linewidth]{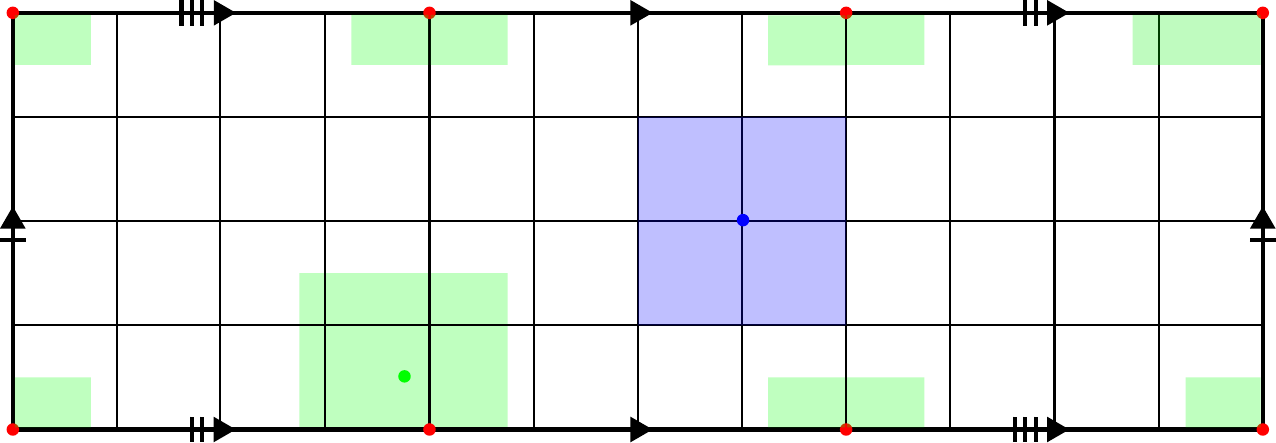}
    \caption{Two $l^{\infty}$-balls of diameter $2$ in $K_{2,0}(3)$.
    The blue ball looks like a square, whereas the green ball looks like a $12$-pointed star as it contains $p$, the point of non-zero curvature highlighted in red. 
    }
    \label{fig:ballsinK20(3)}
\end{figure}

Note that for $g\ge 1$, the surface $K_{g,1}(n)$ can be obtained from  $K_{g,0}(n)$ by removing the open $l^{\infty}$-ball of radius $\frac{1}{2}$ centered at $p$ and shifting the cube structure.
Furthermore, if $g\ge 1$, then all the curvature of $K_{g,0}(n)$ is concentrated at $p$, which has angle $(2g-1)2\pi$; every other point of $K_{g,0}(n)$, hence $K_{g,1}(n)$, has zero curvature.
With respect to either the Euclidean or Chebyshev metric, the injectivity radius of $K_{g,0}(n)$ is $\frac{n+1}{2}$, a fact that will play a small role in our proofs of Theorems \ref{point is square} and \ref{square is discrete}.

\begin{prop}\label{injectivity radius}
The injectivity radius of $K_{g,0}(n)$ is $\frac{n+1}{2}$.
\end{prop}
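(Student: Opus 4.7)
\emph{Proof plan.} The plan is to bound the injectivity radius above and below by identifying the shortest geodesic loop on $K_{g,0}(n)$, which I claim has length exactly $n+1$.

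For the upper bound, I would exhibit a short saddle connection at $p$ directly from Definition \ref{K(n) definition}. The vertical segment $\{0\}\times[0,n+1]$ in the rectangle representation has both endpoints identified with $p$, so it descends to a geodesic loop at $p$ of length $n+1$ in either the Euclidean or Chebyshev metric. The midpoint of this loop is reached from $p$ along two distinct geodesics of length $\frac{n+1}{2}$, so the exponential map from the tangent cone at $p$ fails to be injective on any ball of radius exceeding $\frac{n+1}{2}$; this forces $\mathrm{inj}(K_{g,0}(n))\le \frac{n+1}{2}$.

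For the lower bound, I would verify that for every $z\in K_{g,0}(n)$ and every $r<\frac{n+1}{2}$, the ball $B_r(z)$ embeds isometrically. Saddle connections at $p$ correspond to straight segments between two copies of $p$ in the developing picture, and the copies of $p$ form a sublattice of $\R^{2}$ with minimum separation $n+1$ in both metrics, so every saddle connection has length at least $n+1$. Closed geodesics avoiding $p$ are strictly longer: a horizontal geodesic closes only after one full horizontal period of length $(2g-1)(n+1)$, and a vertical geodesic through a smooth point requires $2g-1$ unfoldings to close (because the vertical identification shifts $x$ by $(2g-2)(n+1)$ modulo the horizontal period $(2g-1)(n+1)$, and $\gcd(2g-2,2g-1)=1$), producing a loop of length $(2g-1)(n+1)\ge n+1$. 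Thus the shortest loop on $K_{g,0}(n)$ has length exactly $n+1$, and balls of radius $r<\frac{n+1}{2}$ embed isometrically everywhere.

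The main obstacle is handling the cone singularity at $p$ carefully: the tangent cone has angle $(2g-1)2\pi$, hence $4(2g-1)$ cardinal directions, and one must check that every saddle connection emanating from these directions (including diagonal ones) has length at least $n+1$. The Chebyshev metric makes this especially clean, since the $l^{\infty}$-length of a straight segment between two distinct copies of $p$ equals the maximum of the coordinate differences, which is at least $n+1$ by the lattice structure. This combinatorial check, together with the unfolding argument for smooth points, completes the lower bound.
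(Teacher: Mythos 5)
The paper states Proposition~\ref{injectivity radius} without proof, treating it as an observation following directly from the construction; so there is no ``paper's proof'' to compare against. Your overall strategy---bound the systole below by $n+1$ and exhibit a loop of that length through $p$---is the right one and does yield the stated injectivity radius $\frac{n+1}{2}$. The cleanest version of your lower-bound argument is the one you only implicitly use: all the gluing translations in Definition~\ref{K(n) definition} lie in $(n+1)\Z\times(n+1)\Z$, so the holonomy (period) lattice of the translation surface $K_{g,0}(n)$ is contained in $(n+1)\Z^{2}$, and every developed lift of $p$ lies in $(n+1)\Z^{2}$. Hence every closed geodesic and every saddle connection has Euclidean, and a fortiori Chebyshev, length at least $n+1$, at any slope, with no case analysis needed.

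However, your explicit case analysis of vertical geodesics contains a concrete error. The vertical gluing does not shift $x$ by the fixed amount $(2g-2)(n+1)$: for a point in the band $x\in\big(i(n+1),(i+1)(n+1)\big)$ the shift is $(2g-2-2i)(n+1)$, which depends on $i$. In particular, for $i=g-1$ the shift is $0$, so the band $\big((g-1)(n+1),g(n+1)\big)$ is a vertical cylinder and already contains vertical closed geodesics of length exactly $n+1$---not $(2g-1)(n+1)$. The $\gcd$ argument you invoke applies to a shift that is constant across all bands, which is not the case here; for $g=2$, a vertical geodesic launched from the band $(0,n+1)$ closes after two crossings (length $2(n+1)$), not three. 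None of this changes the conclusion (every vertical geodesic still has length $\geq n+1$), but the stated reasoning is incorrect and would not survive scrutiny. Replacing the band-by-band analysis with the period-lattice observation above both fixes the error and removes the need to treat slopes other than horizontal and vertical separately.
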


Notably, it follows from this observation that any loop in $K_{g,0}(n)$ of Chebyshev length at most $n$ is contractible; in $K_{g,1}(n)$ any such loop is homotopic into the boundary.
This, combined with the observations that at $p\in K_{g,0}(n)$ there is a cone point of angle $(2g-1)2\pi$, lends itself to describing small subspaces of  $K(n)$ in terms of a form of polar coordinates, where the origin corresponds to $p$. 
We formalize this idea by defining a space $\tilde{K}(n)$ that surjects onto $K(n)$.

\begin{defn}
Let \emph{$S(n)$} denote the $(2n+2)\times(2n+2)$ closed square centered at the origin in $\R^{2}$, i.e., $S(n):=[-n-1, n+1]\times [-n-1, n+1]\subset \R^{2}$.
Cut $S(n)$ along the line $(0, n+1]\times\{0\}$.
We define \emph{$\tilde{K}_{g,0}(n)$} by gluing together $2g-1$ copies of this cut $S(n)$ by identifying the bottom side of slit of the $i^{\text{th}}$-copy of $S(n)$ with the top side of the slit of the $(i+1)^{\text{th}}$-copy of $S(n)$ for $i\in \Z/(n+1)\Z$---this identifies all of the origins, and we will call the resulting point the origin of $\tilde{K}_{g,0}$.
There is a well-defined projection map $\pi:\tilde{K}_{g,0}(n)\to K_{g,0}(n)$ that arises from identifying the top side of the slit along $(0,n+1]\times \{0\}$ in the first $S(n)$ with the interval $(n+1, 2n+2]\times\{0\}$ in $K_{g,0}(n)$ in an orientation preserving locally isometric manner; see Figure \ref{fig:projectionmaptosurface}.

Similarly, we define \emph{$A(n)$} to be the annulus in $\R^{2}$ given by $S(n)\backslash B(n)$, where $B(n)$ is the set of open $l^{\infty}$-balls of radius $\frac{1}{2}$ centered at the $\big((n+1)\Z\big)^{2}$ lattice points in $\R^{2}$; see Figure \ref{fig:projectionmaptosurface}.
We define \emph{$\tilde{K}_{g,1}(n)$} to the subspace of $\tilde{K}_{g,0}(n)$ corresponding to the $A(n)$s, yielding a map, which we also denote by $\pi$, from $\tilde{K}_{g,1}(n)$ to $K_{g,1}(n)$.
\end{defn}

At most points of $\tilde{K}_{g,0}(n)$ the map $\pi$ is $4$-to-$1$; however, there are a few exceptions.
Notably, along the intervals $\big(i(n+1), (n+1)(i+1)\big)\times\{0\}$ and $\big\{i(n+1)\big\}\times\big(0, n+1\big)$ where $i\in \{0, 2g-2\}$ the map is $(4g-2)$-to-$1$; moreover, at $p$, the point of potentially non-zero curvature, the map is $(16g-7)$-to-$1$; see Figure \ref{fig:projectionmaptosurface}

Next, we prove that there is a section of $\pi$ on any ``small'' set $U\subset K(n)$, i.e., reasonably small subspaces of $K(n)$ can be described in terms of	 ``polar coordinates.''
In Section \ref{square configuration spaces} we use this fact to find a way of moving points in a configuration farther apart, and in Section \ref{discrete configuration spaces} we use this fact to retract $SF_{m}\big(K_{g,0}(n)\big)$ onto $DF_{m}\big(K_{g,0}(n)\big)$.

\begin{figure}[H]
    \centering
\includegraphics[height=.8\textheight,keepaspectratio]{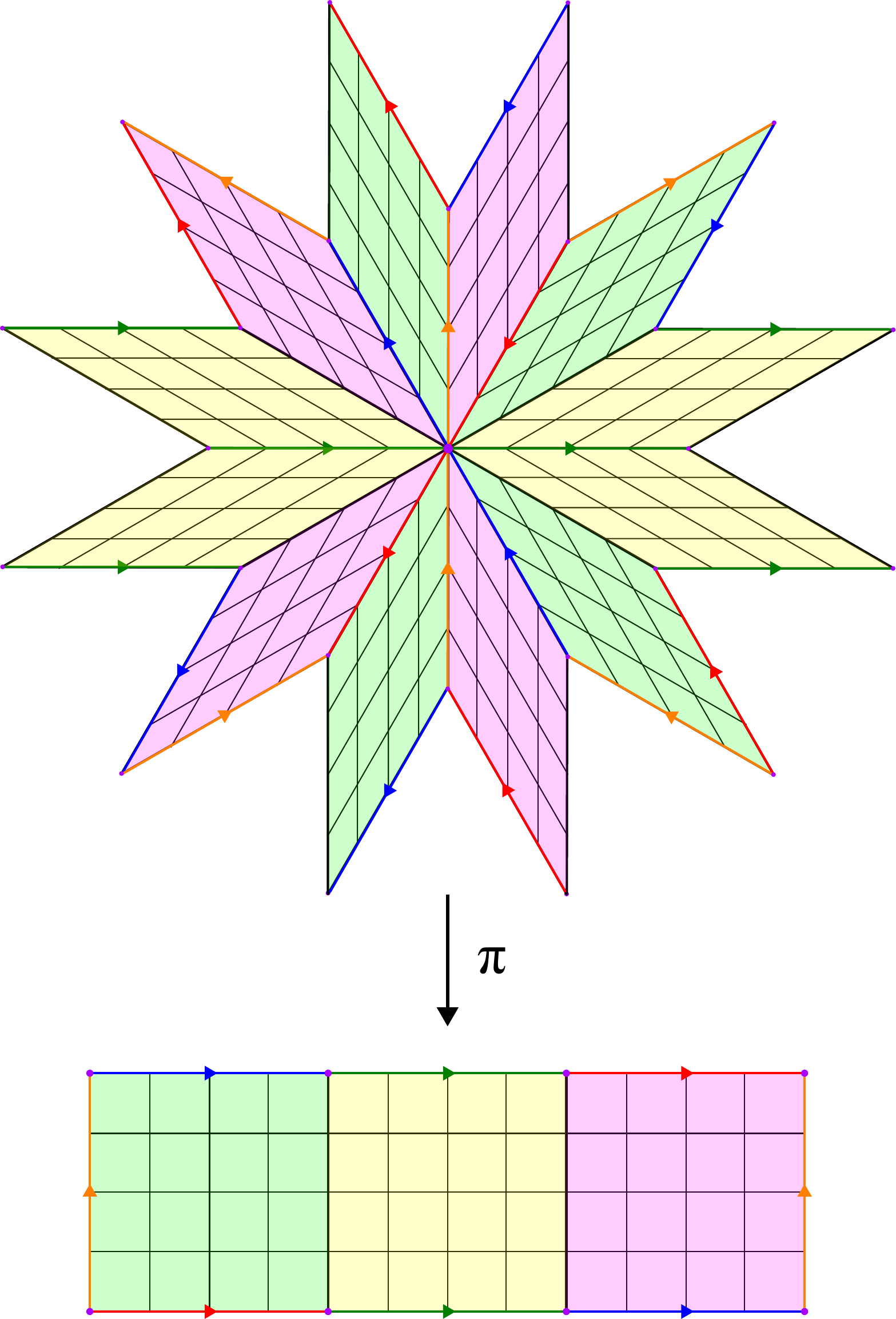}
    \caption{The projection map $\pi$ from $\tilde{K}_{2,0}(3)$ to $K_{2,0}(3)$.
    Note that we have colored the various cells of $\tilde{K}_{2,0}(3)$ and $K_{2,0}(3)$ so that $\pi$  preserves the coloring.
    Moreover, the $1$-cells in $K_{2,0}(3)$ that are colored (a color other than black) the same are identified.
    }
    \label{fig:projectionmaptosurface}
\end{figure}

\begin{lem}\label{section of small set no boundary}
Let $U$ be a path-connected contractible subspace of $K_{g,0}(n)$ of diameter at most $n$ in the intrinsic Chebyshev metric.
Then, there is a continuous section $s:U\to \tilde{K}_{g,0}(n)$ of the projection map $\pi:\tilde{K}_{g,0}(n)\to K_{g,0}(n)$, such that $s(U)$ is contained in the closed $l^{\infty}$-ball of radius $n+\frac{1}{2}$ about the origin in $\tilde{K}_{g,0}(n)$.
\end{lem}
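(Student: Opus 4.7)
The plan is to construct $s$ via path-lifting from a carefully chosen basepoint, exploiting the fact that $\pi$ is a local isometry in the Chebyshev metric away from its only branch point---the origin of $\tilde{K}_{g,0}(n)$, which is the unique preimage of the cone point $p$.

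First, I would pick a basepoint $z_0 \in U$ together with a lift $\tilde{z}_0 \in \pi^{-1}(z_0)$. A short calculation from the rectangle description of $K_{g,0}(n)$ in Definition \ref{K(n) definition} (whose vertices are all identified to $p$) shows that every point of $K_{g,0}(n)$ lies within Chebyshev distance $(n+1)/2$ of $p$, so one can choose $\tilde{z}_0$ with $d_\infty(\tilde{z}_0, 0) \le (n+1)/2$. In addition I insist that $z_0$ be an intrinsic Chebyshev center of $U$, meaning $\sup_{z\in U} d_\infty^U(z_0, z) \le n/2$; such a center exists by an $\ell^\infty$ bounding-box argument once $U$ is unfolded into a flat chart of $\tilde{K}_{g,0}(n)$, which is possible because $U$ is contractible with intrinsic Chebyshev diameter at most $n$.

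Next, for each $z \in U$ I would define $s(z) := \tilde{\gamma}(1)$, where $\gamma$ is any path in $U$ from $z_0$ to $z$ and $\tilde{\gamma}$ is its unique lift starting at $\tilde{z}_0$. Path-lifting is unambiguous because $\pi$ is a local homeomorphism off the origin, and because at the origin the incoming/outgoing angular coordinates (which are preserved by $\pi$, since both $p$ and $0$ are cone points of total angle $(2g-1)\cdot 2\pi$) pin down the lift. The standard monodromy argument using the contractibility of $U$ shows that $s(z)$ does not depend on the choice of $\gamma$, and continuity of $s$ follows immediately from the local homeomorphism property of $\pi$.

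Finally, since $\pi$ is a local Chebyshev isometry, $\tilde{\gamma}$ has the same length as $\gamma$, so choosing $\gamma$ to be a (near-)geodesic in the intrinsic metric of $U$ gives $d_\infty(s(z), \tilde{z}_0) \le d_\infty^U(z, z_0) \le n/2$. Combined with $d_\infty(\tilde{z}_0, 0) \le (n+1)/2$, the triangle inequality yields
\[
d_\infty(s(z), 0) \;\le\; \tfrac{n}{2} + \tfrac{n+1}{2} \;=\; n + \tfrac{1}{2},
\]
so $s(U) \subset \overline{B}_{n+1/2}(0)$. The step I expect to be the main obstacle is verifying the existence of the intrinsic Chebyshev center with radius at most $n/2$; this is a Helly-type bound that is immediate in flat Euclidean $\ell^\infty$ charts but requires care when $U$ approaches or contains $p$, and is most cleanly handled by first lifting $U$ into $\tilde{K}_{g,0}(n)$ and applying the bounding-box argument there, where the metric is Euclidean $\ell^\infty$ away from the origin.
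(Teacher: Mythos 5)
The central move in your proposal---anchoring the lift at an intrinsic Chebyshev center $z_0$ of radius at most $n/2$---is circular. To produce such a $z_0$ you say you would "unfold $U$ into a flat chart of $\tilde{K}_{g,0}(n)$" and apply an $\ell^\infty$ bounding-box / Jung-type argument there; but the existence of such an unfolding is exactly the content of the lemma you are trying to prove. Without the section you cannot unfold $U$, and without unfolding you have no justification for the radius-$n/2$ center: the intrinsic Chebyshev metric on $U \subset K_{g,0}(n)$ is not a flat $\ell^\infty$ metric (there is a cone point of angle $(2g-1)2\pi$ at $p$), so the Jung bound does not come for free, and the target $\tilde{K}_{g,0}(n)$ is not flat either (it has the same cone angle at the origin for $g \ge 2$), so even after the hypothetical unfolding the bounding-box argument would not directly apply. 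There is also the smaller problem that the center of a bounding box of $s(U)$ need not lie in $s(U)$ at all. The paper sidesteps all of this: it anchors the angle-and-length parametrization at $p$ when $p \in U$ (so the image automatically lands in the ball of radius $n$ because $U$ has diameter at most $n$ and $p$ maps to the origin), and when $p \notin U$ it takes a ray from $p$ into $U$ and runs a short case analysis on which of the distinguished edges $U$ meets, choosing the ray's base point to force the $n + \tfrac{1}{2}$ bound. No center of symmetry is ever needed.

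A secondary issue is the "standard monodromy argument." You are right that $\pi$ is a local homeomorphism (indeed a local isometry), including at the origin where the cone angles match, but $\pi$ is not a covering map---it has varying local degree ($4$, $4g-2$, $16g-7$) and a manifold-with-boundary domain, so paths need not lift at all if they wander too far. Your lift exists and monodromy applies only because the lifted paths stay inside the interior of $\tilde{K}_{g,0}(n)$, which is precisely the closed ball of radius $n+1$. That, however, requires the distance bound for every path in a null-homotopy in $U$, not only for the (near-)geodesics from $z_0$; intermediate paths of a homotopy can be arbitrarily long. The paper's well-definedness argument instead shows directly that two paths from $p$ to $q$ in $U$ giving different lift endpoints would concatenate to a non-contractible loop based at $p$, contradicting contractibility of $U$---a more robust argument that does not ride on a covering-space lifting theorem. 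As written, your proposal has a genuine gap at both the Chebyshev-center step and the monodromy step.
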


\begin{proof}
If $U$ contains $p$, then the map $s$ that sends $p$ to the origin in $\tilde{K}_{g,0}(n)$ extends to a section $s:U\to \tilde{K}_{g,0}(n)$.
To see this, let $\gamma(t)$ be a path in $U\subset K_{g,0}(n)$ starting at $p$, and consider the straight-line $L_{\gamma(t)}$ in $K_{g,0}(n)$ from $p$ to $\gamma(t)$ that arises from starting with the trivial line at $p$ and moving along $\gamma$.
If $l_{\gamma(t)}$ denotes the Euclidean length of $L_{\gamma(t)}$ and $\theta_{\gamma(t)}$ denotes the angle $L_{\gamma(t)}$ makes with the line segment $[n+1, 2n+2)\times\{0\}$ at $\{n+1\}\times \{0\}=p$, we send $\gamma(t)$ to the point $\tilde{\gamma}(t)$ in $\tilde{K}_{g,0}(n)$ that is distance $l_{\gamma(t)}$ from the origin and is such that the straight-line in $\tilde{K}_{g,0}(n)$ through the origin and $\tilde{\gamma}(t)$ and the top side of the interval $[0,n+1]$ in the first copy of $S(n)$ in $\tilde{K}_{g,0}(n)$ make an angle of $\theta_{\gamma(t)}$ at the origin.
We claim that this map $s:\gamma(t)\mapsto\tilde{\gamma}(t)$ is a continuous section of $\pi:\tilde{K}_{g,0}(n)\to K_{g,0}(n)$.

First, we check that $s$ is well-defined.
It suffices to show that if $\gamma(t)$ and $\gamma'(t)$ are two different paths $[0,1]\to U$ from $p$ to $q$, then they lead to the same point $\tilde{q}$ in $\tilde{K}_{g,0}(n)$, i.e., $\tilde{\gamma}(1)=\tilde{\gamma}(1)$.
Assume they do not.
In this case the loop $\gamma^{-1}\gamma'$ in $K_{g,0}(n)$ is non-contractible, as such a concatenation of paths would yield a path starting at one of the points $\big(i(n+1), 0\big)$ or $\big(i(n+1),n+1\big)$ in  $\big[0,(2g-1)(n+1)\big]\times[0,n+1]$ identified with $p$ and ending at a different one of these points.
Any such path is a non-contractible loop; see Figure \ref{fig:longpathnopolar} for an example.
This contradicts our assumptions on the contractibility of $U$, so $s$ must be well-defined.

\begin{figure}[H]
    \centering
\includegraphics[width=.8\textwidth,keepaspectratio]{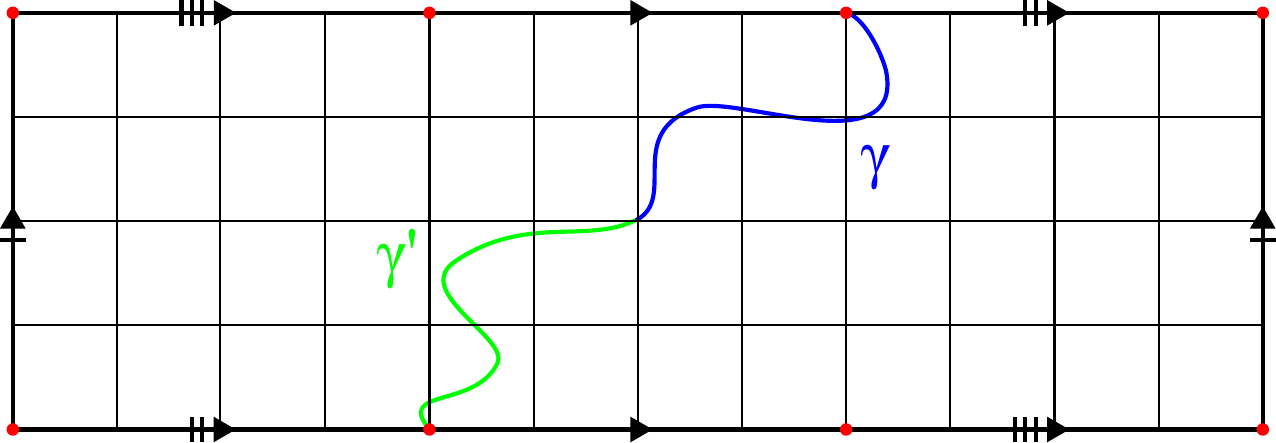}
    \caption{A non-contractible loop in $K_{2,0}(3)$ arising from the concatenation $\gamma^{-1}\gamma'$ of two paths $\gamma$ and $\gamma'$ starting at $p$.
    If $\gamma'$ is the green path and $\gamma$ is the blue path, we have that $\theta\big(\gamma'(1)\big)=\frac{\pi}{4}$ and $\theta\big(\gamma(1)\big)=\frac{13\pi}{4}$ despite $\gamma(1)=\gamma'(1)$.
    Our restrictions on $U$ allow us to avoid such a situation and the resulting concerns about the well-definedness of $s$.
    }
    \label{fig:longpathnopolar}
\end{figure}

Additionally, $s$ is continuous, and its image must be contained in $l^{\infty}$-ball of radius $n$ around the origin in $\tilde{K}_{g,0}(n)$ by our assumptions on the diameter of $U$.

If $U$ does not contain $p$, then there is some ray of Chebyshev length at most $n$ that starts at $p$ ands hits $U$.
If $U$ contains two of the segments of the form $\big(i(n+1), (n+1)(i+1)\big)\times\{0\}$ or $\big\{i(n+1)\big\}\times\big(0, n+1\big)$ for $i\in \{0, 2g-2\}$, then there must be a path $\gamma$ in $U$ that connects these two edges.
Moreover, we may take these two edges to be two adjacent sides of one of the squares of the form $\big(i(n+1), (n+1)(i+1)\big)\times(0,n+1)$ for $i\in \{0, 2g-2\}$.
To see this, note that a path in contained in one of these squares cannot have end-points on opposite sides of the square as such a path would be of length greater than $n$, too long for our restrictions on $U$.
Take the ray from $p$ that starts at the corner of the square where the sides meet and goes along one of the edges of the square.
Lifting this ray to $\tilde{K}_{g,0}$ via the angle and length method defined above yields a well-defined continuous section of $\pi:\tilde{K}_{g,0}(n)\to K_{g,0}(n)$ on $U$.
Moreover, the image of this section must be contained in the $l^{\infty}$-ball of radius $n$ about the origin in $\tilde{K}_{g,0}(n)$, as if it does not, then the diameter of $U$ would be too large.
If $U$ only intersects one of the edges of the form $\big(i(n+1), (n+1)(i+1)\big)\times\{0\}$ or $\big\{i(n+1)\big\}\times\big(0, n+1\big)$ for $i\in \{0, 2g-2\}$, then one can take either ray starting at one of the end points of this edge going along it, yielding a lift. 
Moreover, one of these choices guarantees the image of $U$ is contained in the $l^{\infty}$-ball of radius $n+\frac{1}{2}$ about the origin in $\tilde{K}_{g,0}(n)$, as otherwise the diameter of $U$ would too large.
Finally, if $U$ does not contain any of these edges, it is entirely contained in one of the squares of the form $\big(i(n+1), (n+1)(i+1)\big)\times(0,n+1)$ for $i\in \{0, 2g-2\}$.
Any ray entirely contained in this square starting at $p$ gives a section, though we may choose our section to ensure its image lies in the desired subspace of $\tilde{K}_{g,0}(n)$.
\end{proof}

This section of $\pi:\tilde{K}_{g,0}(n)\to K_{g,0}(n)$ yields a section of $\pi:\tilde{K}_{g,1}(n)\to K_{g,1}(n)$ on small subspaces of $K_{g,1}(n)$.

\begin{lem}\label{section of small set with boundary}
Let $U$ be a connected subspace of $K_{g,1}(n)$ such that the diameter of $U$ is at most $n$.
Then there is a section $s:U\to \tilde{K}_{g,1}(n)$ of the projection map $\pi:\tilde{K}_{g,1}\to K_{g,1}(n)$.
\end{lem}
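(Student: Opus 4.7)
The plan is to reduce to Lemma \ref{section of small set no boundary} by viewing $U$ as a subset of $K_{g,0}(n)$. Since $K_{g,1}(n)$ is obtained from $K_{g,0}(n)$ by deleting the open $l^\infty$-ball of radius $\frac{1}{2}$ around $p$, the set $U$ lies at Chebyshev distance at least $\frac{1}{2}$ from $p$. Moreover, paths in $K_{g,0}(n)$ are permitted to traverse the removed neighborhood of $p$ and so are no longer than the corresponding paths in $K_{g,1}(n)$; hence the intrinsic Chebyshev diameter of $U$ measured inside $K_{g,0}(n)$ is also at most $n$. These observations place $U$ squarely in the setting of the ``$p\notin U$'' case of the proof of Lemma \ref{section of small set no boundary}.

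Next, I would invoke the polar-coordinate construction from that case: choose a ray from $p$ running along an edge of $K_{g,0}(n)$ adjacent to $U$ (selected according to which segments of the form $\big(i(n+1),(n+1)(i+1)\big)\times\{0\}$ or $\big\{i(n+1)\big\}\times\big(0,n+1\big)$ meet $U$), lift this ray deterministically to $\tilde{K}_{g,0}(n)$, and then extend to a section $s:U\to\tilde{K}_{g,0}(n)$ by sending each $x\in U$ to the point at the same $l^\infty$-distance from the origin and at the corresponding angle, measured within the $(2g-1)\cdot 2\pi$ cone at the origin. The diameter bound keeps the angle coordinate within a single full cone rotation; any loop in $U$ that winds once around $p$ in $K_{g,0}(n)$ corresponds to a rotation by the full cone angle $(2g-1)\cdot 2\pi$, which matches the cone angle at the origin of $\tilde{K}_{g,0}(n)$, so the lift closes up and $s$ is a well-defined continuous section of $\pi$.

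Finally, I would verify that $s(U)\subseteq\tilde{K}_{g,1}(n)$. By construction $d_\infty(s(x),\text{origin})=d_\infty(x,p)\ge\frac{1}{2}$, so $s(U)$ avoids the open $l^\infty$-ball of radius $\frac{1}{2}$ removed at the origin of each sheet $S(n)$ of $\tilde{K}_{g,0}(n)$. Combined with the fact from Lemma \ref{section of small set no boundary} that $s(U)$ lies in the closed $l^\infty$-ball of radius $n+\frac{1}{2}$ about the origin, an analogous distance estimate shows that $s(U)$ also avoids the $l^\infty$-balls of radius $\frac{1}{2}$ removed at the remaining $\big((n+1)\Z\big)^{2}$-lattice points in the sheets constituting $\tilde{K}_{g,0}(n)$, since each such lattice point is a preimage of $p$ and is therefore at Chebyshev distance at least $\frac{1}{2}$ from any candidate image point of $U$.

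The expected main obstacle is verifying well-definedness under the weaker hypothesis that $U$ be merely connected rather than contractible: the preceding lemma used contractibility decisively in its ``$p\in U$'' case, and here we must rely instead on the matching cone angles at $p$ and at the origin of $\tilde{K}_{g,0}(n)$ to make the polar-coordinate lift single-valued even when $U$ winds around the hole.
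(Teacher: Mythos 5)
Your overall strategy matches the paper's: view $U$ as a subspace of $K_{g,0}(n)$, apply Lemma~\ref{section of small set no boundary} to obtain a section $s\colon U\to\tilde{K}_{g,0}(n)$, and then check that $s(U)\subset\tilde{K}_{g,1}(n)$. For the last step the paper simply observes that $\pi^{-1}\big(K_{g,1}(n)\big)=\tilde{K}_{g,1}(n)$, so $s(U)\subset\pi^{-1}(U)\subset\pi^{-1}\big(K_{g,1}(n)\big)=\tilde{K}_{g,1}(n)$ automatically, for any section whatsoever; your computation of $d_\infty\big(s(x),\text{origin}\big)=d_\infty(x,p)\ge\frac{1}{2}$ and the case-by-case check against the other lattice points of $\big((n+1)\Z\big)^{2}$ reaches the same conclusion, but it is unnecessary and ties the argument to the specific polar-coordinate construction of $s$, when the preimage identity handles it uniformly. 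On the other hand, your closing remark about the weakening of ``contractible'' to ``connected'' flags a subtlety that the paper's one-line reduction passes over: once $n$ is large relative to $g$, a connected $U\subset K_{g,1}(n)$ of diameter at most $n$ can encircle the deleted neighbourhood of $p$ (e.g.\ a loop hugging $\partial K_{g,1}(n)$), in which case $U$ is not a contractible space, so Lemma~\ref{section of small set no boundary} as stated does not literally apply. The paper does not address this; the reduction is salvageable because $p\notin U$ here and the proof of Lemma~\ref{section of small set no boundary} only invokes contractibility in the case $p\in U$, but some version of your cone-angle observation (that a loop around $p$ lifts to a closed loop around the origin because both cone points have angle $(2g-1)\cdot 2\pi$) would be needed to make the $p\notin U$ case airtight for non-simply-connected $U$. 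As written that part of your proposal is a sketch rather than a complete verification, but it is the right idea.
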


\begin{proof}
By construction, $K_{g,1}(n)$ can be viewed as a subspace of $K_{g,0}(n)$; moreover, the preimage of $K_{g,1}(n)$ under $\pi$ is $\tilde{K}_{g,1}(n)$, which in turn is a subspace of $\tilde{K}_{g, 0}(n)$.
Therefore, we can view $U$ as a subspace of $K_{g,0}(n)$, and consider the section $s$ of Lemma \ref{section of small set no boundary} on $U$. 
Since the image of this section lies in the preimage of $K_{g,1}(n)$, that is $\tilde{K}_{g,1}(n)$, this section restricts to a section $s$ of $\pi:\tilde{K}_{g,1}(n)\to K_{g,1}(n)$ on $U$.
\end{proof}

Note that the sections $s$ are distance preserving (in the inherit metric) on $U$, a fact that will come in handy in Sections \ref{square configuration spaces} and \ref{discrete configuration spaces}.

In Section \ref{discrete configuration spaces} we will need the notion of a dual complex due to the boundary of $K_{g,1}(n)$, so we recall its definition here.
Given a cube complex $K$ coming from a square-tiled surface, there is a natural \emph{dual complex}, which we denote by $K^{*}$.
For each $2$-cell of $K$, there is a $0$-cell of $K^{*}$, and we connect two $0$-cells of $K^{*}$ with a $1$-cell if the corresponding $2$-cells of $K$ share a $1$-dimensional face.
Additionally, there is a $2$-cell in $K^{*}$ for every $0$-cell in the interior of $K$.
The boundary of this $2$-cell consists of the $0$- and $1$-cells of $K^{*}$ dual to the cells of $K$ containing the corresponding $0$-cell.
In general, this dual complex is not a cube complex; however, if $K$ is of the form $K_{1,0}(n)$ or $K_{g,1}(n)$, then the dual complex $K^{*}$ is a cube complex.
For $g\ge 1$, the complex $K^{*}_{g,1}(n)$ arises from $K_{g,0}(n)$ by removing the open $l^{\infty}$-ball of radius $1$ around $p$, that is, $K^{*}_{g,1}(n)$ is the subcomplex of $K_{g,0}(n)$ consisting of all cells whose closure does not contain $p$; see Figure \ref{fig:K21(3)dual} for an example.
We will use this dual complex to construct a discrete configuration space homotopy equivalent to $F_{m}(\Sigma_{g,1})$.

\begin{figure}[h]
    \centering
    \includegraphics[width=.75\linewidth]{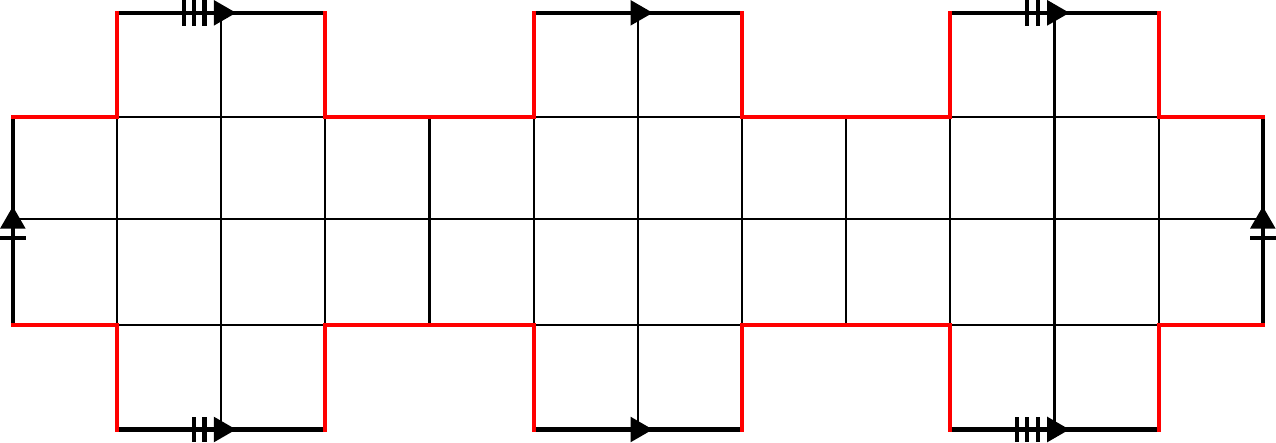}
    \caption{
    The cube complex $K^{*}_{2,1}(3)$, with boundary in red.
    Note that $K^{*}_{g,1}(n)$ is a subcomplex of $K_{g,0}(n)$, a fact we will use in Section \ref{discrete configuration spaces} to prove that if $m\le n$, the $m^{\text{th}}$-discrete ordered configuration of $K^{*}_{g,1}(n)$ is homotopy equivalent to $F_{m}(\Sigma_{g,1})$.
    }
    \label{fig:K21(3)dual}
\end{figure}

In the next two sections we will use the complexes $K_{g,b}(n)$ and $K^{*}_{g,b}(n)$ to find a discrete model for the ordered configuration space of points in a surface (with boundary).
We begin by noting that the cube complex structure of $K_{g,b}(n)$ lends itself to the notion of the configuration space of squares in $K_{g,b}(n)$.
We will show that as long as the squares are small enough, this square configuration space is homotopy equivalent to point configuration space.

\section{Square Configuration Spaces}\label{square configuration spaces}

Our proof that there is a simple discrete model for the configuration space of points in a surface factors through square configuration spaces.
In this section, we consider the Chebyshev metric on our model $K_{g,b}(n)$ of the surface $\Sigma_{g,b}$, and see that there is a notion of the ordered configuration space of squares in $\Sigma_{g,b}$. 
Further, we prove Theorem \ref{point is square}, showing that if the squares are small enough, then the configuration space of $m\le n$ squares in $K_{g,b}(n)$ is homotopy equivalent to the configuration space of $m$ points in $\Sigma_{g,b}$.
This equivalence and that of Theorem \ref{square is discrete} combine to yield Theorem \ref{point is discrete}.

In order to prove that the configuration space of points in a surface $\Sigma_{g,b}$ is homotopy equivalent to the configuration space of squares in the homeomorphic cube complex, we must define square configuration space.
In the previous section, we noted that there is a Chebyshev or $l^{\infty}$-metric on $K_{g,b}(n)$ arising from the Euclidean metric on $\R^{2}$ and that there well-defined notions of left and right, up and down at every point other than the sole singular point $p$ in $K_{g,0}(n)$.
This metric is flat everywhere in $K_{g,1}(n)$ and everywhere in $K_{g,0}(n)$ other than $p$, which is a cone point of angle $2\pi(2g-1)$.
In particular, this metric leads to the notion of an open $l^{\infty}$-ball of diameter $d$ centered at $z\in K$, being the set of all points $z'$ whose horizontal and vertical distances to $z$ is at most $\frac{d}{2}$, i.e., the set of points $z'$ such that $d_{\infty}(z', z')<\frac{d}{2}$.
If this open $l^{\infty}$-ball does not intersect the boundary of $K$, then we call it an \emph{open $d$-square}.
Note that if our surface $K(n)$ is of the form $K_{g,1}(n)$ or $K_{1,0}(n)$ and $d$ is less than $\frac{n+1}{2}$, then the open $d$-squares look like squares, i.e., consist of four segments of length $d$ joined at right-angles.
This is also true at most points $z$ of $K_{g,0}(n)$ for $g>1$ as well, though if $d_{\infty}(z, p)<\frac{d}{2}$, the open $l^{\infty}$-ball of diameter $d$ centered at $z$ is distorted and becomes star-shaped; see Figure \ref{fig:ballsinK20(3)}.
Despite these deformities, we call such balls squares.

Given a square-tiled surface $K(n)$ of the form $K_{g,0}(n)$ or $K_{g,1}(n)$ we write $SF_{m}\big(K(n), d\big)$ for the \emph{$m^{\text{th}}$-ordered configuration space of open $d$-squares in $K(n)$}.
This is the subspace of points in $\big(K(n)\big)^{m}$ whose coordinates are Chebyshev distance at least $d$ apart and Chebyshev distance at least $\frac{d}{2}$ from the boundary of $K(n)$, i.e., 
\[
SF_{m}\big(K(n), d\big):=\left\{(z_{1},\dots, z_{m})\in \big(K(n)\big)^{m}|d_{\infty}(z_{i}, z_{j})\ge d\;\forall i\neq j, d_{\infty}\big(z_{i},\partial K(n)\big)\ge \frac{d}{2}\;\forall i\right\};
\]
see Figure \ref{fig:pointinSF3K21(3)} for an example of a configuration in a square configuration space.
Note that the symmetric group $S_{m}$ acts on $SF_{m}\big(K(n),d\big)$ by permuting the labels of the points.
Taking the quotient of $SF_{m}\big(K(n),d\big)$ by this action yields $SC_{m}\big(K(n), d\big)$ the \emph{$m^{\text{th}}$-unordered configuration space of $d$-squares in $K(n)$}.
Though we only work with ordered square configuration spaces, our results also hold for unordered square configuration spaces.
When $d=1$, we write $SF_{m}\big(K(n)\big)$ for $SF_{m}\big(K(n),1\big)$.

\begin{figure}[h]
    \centering
    \includegraphics[width=.75\linewidth]{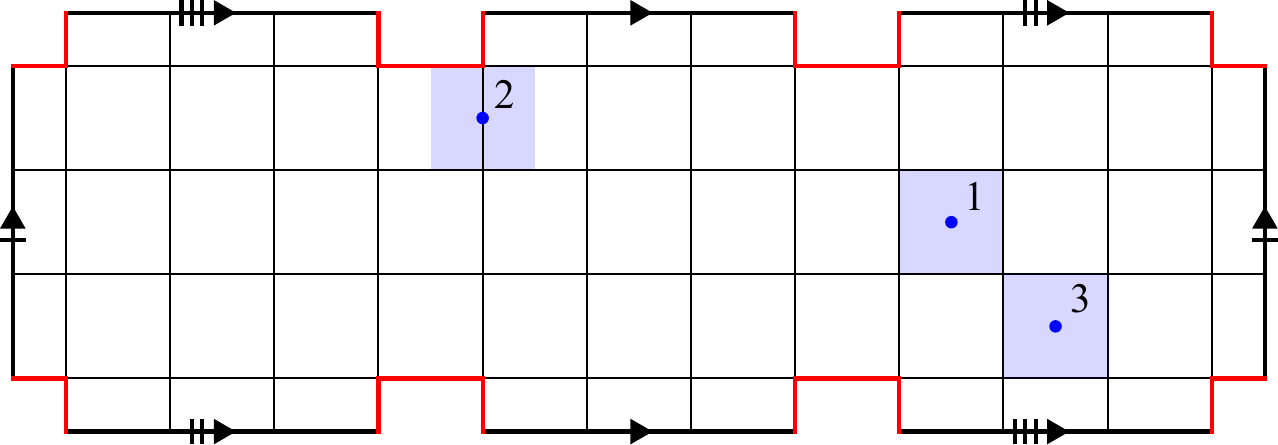}
    \caption{
    A configuration in $SF_{3}\big(K_{2,1}(3)\big)$.
    Since $d_{\infty}\big(z_{2},\partial K_{2,1}(3)\big)=\frac{1}{2}$ and $d_{\infty}(z_{1},z_{3})=1$, this configuration does not belong in $SF_{3}\big(K_{2,1}(3),d\big)$ for $d>1$.
    }
    \label{fig:pointinSF3K21(3)}
\end{figure}

We will use affine Morse--Bott theory to prove that if $m\le n$, then $F_{m}\big(K(n)\big)$ and $SF_{m}\big(K(n),d\big)$ are homotopy equivalent for $d\le1$.
To use these techniques we generalize Plachta's definition of the tautological function $\theta_{m}$ on $\big(K_{0,1}(n)\big)^{m}$ \cite{plachta2021configuration} to general $\big(K(n)\big)^{m}$.
We will show that these functions are affine Morse--Bott, and prove that their smallest critical values are always at least $\frac{1}{2}$.
This will allow us to use a standard discrete Morse theoretical argument to prove Theorem \ref{point is square}.

We begin by recalling the definitions of affine polytope complexes and affine Morse--Bott functions.
A CW-complex $X$ of dimension $l\ge 1$ is an \emph{affine polytope complex of dimension $l$} if each closed cell $c$ of $X$  is equipped with a characteristic function $\chi_{c}:p_{c}\to X$ such that
\begin{enumerate}
\item $p_{c}$ is a convex polyhedral cell in $\R^{l}$ for each $c$;
\item $\chi_{c}$ is an embedding for each closed cell $c$;
\item the restriction of $\chi_{c}$ to any face of $p_{c}$ coincides with the characteristic function of another cell precomposed with an affine homeomorphism of $\R^{l}$.
\end{enumerate}

It is immediate from their definitions that the complexes $K_{g,0}(n)$ and $K_{g,1}(n)$ are affine polytope complexes of dimension $2$.
This implies that $(K_{g,0}(n))^{m}$ and $(K_{g,1}(n))^{m}$ are also affine polytope complexes.

Given $d>0$ and a point $\textbf{z}=(z_{1}, \dots, z_{m})\in \big(K(n)\big)^{m}$, where $K(n)$ is of the form $K_{g,0}(n)$ or $K_{g,1}(n)$, we would like to determine whether $\textbf{z}$ is also in $SF_{m}\big(K(n), d\big)$.
We do this by considering the \emph{particular metric functions}
\[
d_{i,j}(\textbf{z}):=d_{\infty}(z_{i}, z_{j})
\]
and 
\[
d_{i}(\textbf{z}):=d_{\infty}\big(z_{i}, \partial K_{g,1}(n)\big),
\]
which measure the distance between pairs of points in a configuration and the distance from a point in a configuration to the boundary of $K_{g,1}(n)$, respectively.
We use these functions to define the \emph{tautological function} $\theta_{m}$, which we use to determine whether or not a configuration $\textbf{z}$ is in $SF_{m}\big(K(n), d\big)$:
\[
\theta_{m}(\textbf{z}):=\min\left\{\frac{1}{2}\min_{i\neq j}d_{\infty}(z_{i}, z_{j}),\min_{k}d_{\infty}\big(z_{k}, \partial K(n)\big)\right\}
\]
Namely, $\textbf{z}\in SF_{m}\big(K(n),d\big)$ if and only if $d\le 2\theta_{m}(\textbf{z})$; as a result, we will devote considerable time and energy to studying this function.
In order to use $\theta_{m}$ to prove the homotopy equivalence of $F_{m}\big(\Sigma_{g,b}\big)$ and $SF_{m}\big(K_{g,b}(n)\big)$ for $m\le n$, we need to show that $\theta_{m}$ is an \emph{affine Morse--Bott function}, i.e., a continuous map $f$ from an affine polytope complex $X$ to $\R$ such that $f\circ \chi_{c}:p_{c}\to \R$ extends to an affine function $\R^{l}\to \R$ for each cell $c\in X$.
Then, we must determine its critical values.
The first step in this process is to prove that $d_{\infty}(z_{1}, z_{2})$ is an affine Morse--Bott function on $\big(K(n)\big)^{2}$.

\begin{prop}\label{distanct function between two points is affine Morse--Bott}
If $K(n)$ is a cube complex of the form $K_{g,0}(n)$ or $K_{g,1}(n)$, the Chebyshev distance function $d_{1,2}:\big(K(n)\big)^{2}\to \R$ is an affine Morse--Bott with respect to some subdivision $Q_{2}$ of the complex $\big(K(n)\big)^{2}$.
\end{prop}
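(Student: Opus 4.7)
The plan is to construct $Q_{2}$ as an affine refinement of $\big(K(n)\big)^{2}$ on which $d_{1,2}$ admits an explicit piecewise affine description. The guiding observation is that, although $K(n)$ may carry a cone point, the local cover $\tilde{K}_{g,0}(n)$ sits inside $\R^{2}$ with its Euclidean $l^{\infty}$-metric, and on $\R^{2}\times\R^{2}$ the function $\max\bigl(|x_{1}-x_{2}|,|y_{1}-y_{2}|\bigr)$ is manifestly piecewise affine in the coordinates $(x_{i},y_{i})$.

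First I would fix a product cell $c_{1}\times c_{2}$ of $\big(K(n)\big)^{2}$ and enumerate the finitely many homotopy classes $[\gamma]$ of paths from a point of $c_{1}$ to a point of $c_{2}$ that could conceivably realize the Chebyshev distance. This list is finite because $K(n)$ has bounded diameter and the injectivity radius bound of Proposition \ref{injectivity radius} controls how many short geodesic classes can compete. For each such class, Lemma \ref{section of small set no boundary} supplies a section $s_{[\gamma]}$ into $\tilde{K}_{g,0}(n)\subset\R^{2}$ on a contractible neighborhood $U_{[\gamma]}$ of $\{z_{1},z_{2}\}$, chosen so that geodesic representatives of class $[\gamma]$ lift to straight Euclidean segments. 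In these lifted coordinates the candidate distance is $d_{[\gamma]}(z_{1},z_{2})=\max\bigl(|x_{1}-x_{2}|,|y_{1}-y_{2}|\bigr)$, and the true Chebyshev distance is $d_{1,2}=\min_{[\gamma]}d_{[\gamma]}$.

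I would then define $Q_{2}$ by slicing each $c_{1}\times c_{2}$ by the finitely many affine hyperplanes coming from, in order, (i) the sign equalities $x_{1}=x_{2}$ and $y_{1}=y_{2}$ for each class $[\gamma]$, (ii) the $\max$-branch equalities $|x_{1}-x_{2}|=|y_{1}-y_{2}|$, and (iii) the pairwise comparisons $d_{[\gamma]}=d_{[\gamma']}$ between competing classes, which become affine once (i) and (ii) are imposed. Because each section $s_{[\gamma]}$ is a local isometry, every such equality translates into an affine equality in the native affine coordinates of $c_{1}\times c_{2}$, so the pieces of $Q_{2}$ are convex polytopes and assemble into an affine polytope complex refining $\big(K(n)\big)^{2}$. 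On each top-dimensional piece a single class $[\gamma]$ minimizes and a definite sign/max branch is selected, so $d_{1,2}$ restricts to a single linear expression $\pm(x_{1}-x_{2})$ or $\pm(y_{1}-y_{2})$ there, and therefore extends affinely to $\R^{4}$ as required by the definition of an affine Morse--Bott function.

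The main obstacle is bookkeeping near the cone point $p\in K_{g,0}(n)$, where $\pi$ can be as much as $(16g-7)$-to-$1$ and the greatest number of homotopy classes compete for the minimum. To handle this I would use the full flexibility in Lemma \ref{section of small set no boundary} when choosing each $s_{[\gamma]}$ and then verify directly that on each codimension-one face of $Q_{2}$ shared by two adjacent cells the two prescribed affine expressions for $d_{1,2}$ agree on that face. This compatibility ensures the characteristic maps of $Q_{2}$ and their affine extensions patch together into an honest affine Morse--Bott structure on the refined complex.
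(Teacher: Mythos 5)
Your overall plan---parametrize locally by developing into $\R^2$ via $\tilde{K}_{g,0}(n)$, write $d_{1,2}$ as a minimum over finitely many affine candidates indexed by homotopy classes, and cut by the hyperplanes where candidates tie---is a genuinely different route from the paper's. The paper instead fixes $z_1$, grows the $l^{\infty}$-ball $B_r(z_1)$ inside $K(n)$, records which of the finitely many (possibly star-shaped near $p$) sides of $\partial B_r(z_1)$ first reaches $z_2$, and cuts $C_1\times C_2$ by the hyperplanes along which two sides tie; each side contributes one affine horizontal or vertical measurement, so the cone point is handled for free simply because the ball acquires extra sides there. Your lift-based approach is workable in principle, but as written it has a gap exactly at the cone point.

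The gap is your claim that each candidate distance is $\max\bigl(|x_1-x_2|,|y_1-y_2|\bigr)$ in developed coordinates, after choosing $s_{[\gamma]}$ so that the geodesic develops to a straight Euclidean segment. The composite $U\to\tilde{K}_{g,0}(n)\to\R^2$ is an affine local isometry away from the origin, but at the origin it collapses the cone angle $(2g-1)2\pi$ to $2\pi$ and so does not preserve angles there; a geodesic forced to pass through $p$ develops to a bent pair of rays meeting at the origin, and no choice of $s_{[\gamma]}$ straightens it. Its $l^{\infty}$-length is $\max\bigl(|\hat{x}_1|+|\hat{x}_2|,\,|\hat{y}_1|+|\hat{y}_2|\bigr)$, not $\max\bigl(|\hat{x}_1-\hat{x}_2|,\,|\hat{y}_1-\hat{y}_2|\bigr)$. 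For a concrete failure take $g\ge 2$ and $z_1,z_2$ both at Euclidean position $(r,0)$ relative to $p$ but at cone angles differing by $2\pi$: any section from Lemma \ref{section of small set no boundary} develops both to the same point of $\R^2$, so your formula returns $0$, while $d_{1,2}(z_1,z_2)=2r$. Indeed the paper meets the same phenomenon in Proposition \ref{local coordinate check boundary}, whose third and fourth conditions are precisely the ``sum'' inequalities $|\hat{x}_1+\hat{x}_2|\ge 1$ and $|\hat{y}_1+\hat{y}_2|\ge 1$ that these classes force. Such sum-type candidates are still piecewise affine once you also slice along $\hat{x}_i=0$ and $\hat{y}_i=0$, so your strategy can be patched, but you must add them explicitly rather than appeal to a straight-line development that does not exist.
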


\begin{proof}
There is a natural polytopal structure on $K_{g,0}(n)$ arising from the integer grid in $\R^{2}$---for $K_{g,1}(n)$, the offset integer grid---however, this polytopal structure is not fine enough.
We remedy this by subdividing products of closed $2$-cells in $K(n)$, and showing that this leads to an affine polytopal structure on $K(n)$.
Namely, given a configuration $\textbf{z}=(z_{1},z_{2})\in C_{1}\times C_{2}\subset\big(K(n)\big)^{2}$, where $C_{1}$ and $C_{2}$ are closed $2$-cells, consider the two parameter family $B_{r}(z_{1})$ of $l^{\infty}$-balls centered at points $z_{1}\in C_{1}\subset K(n)$.
Translating $z_{1}$ leads to translations of the sides of a ball, and increasing the radius from $r$ to $r+r'$ translates the sides of a ball a distance of $r'$.
Note that for points on the horizontal, resp. vertical, sides of the boundary of $B_{r}(z_{1})$ in $K(n)$ the distance to $z_{1}$ is determined via a single vertical, resp. horizontal, measurement. 
Moreover, which of the horizontal, resp. vertical, sides of the $l^{\infty}$-ball centered at $z_{1}$ in $C_{1}$ first touches $z_{2}$ in $C_{2}$ determines how one should go about measuring that vertical, resp. horizontal, distance from $z_{1}$ to $z_{2}$.
Additionally, the intersection of any two sides of the $l^{\infty}$-ball centered at $z_{1}\in C_{1}$ form a hyperplane in $C_{2}$, so varying $z_{1}$ in $C_{1}$ and $r$ yields a hyperplane in $C_{1}\times C_{2}$.
Thus, we see that these hyperplanes in $C_{1}\times C_{2}$ arising from the intersection of the sides $l^{\infty}$-balls determine which of the corresponding two methods of determining distance yields a smaller measurement, and on the hyperplane these measurements agree.
Since these distance functions are affine, it follows that the various methods of determining distance are affine functions on the corresponding subspace of $C_{1}\times C_{2}$.
Since there can only be finitely many sides of an $l^{\infty}$-ball in $K(n)$, we see that $C_{1}\times C_{2}$ is subdivided by a finite number of hyperplanes into regions on which $d_{\infty}(z_{1}, z_{2})$ is an affine function. 
Further subdividing via hyperplanes to ensure that the regions are convex polytopes yields the desired subdivision that makes $d_{\infty}(z_{1}, z_{2})$ an affine Morse--Bott function on each $C_{1}\times C_{2}$, implying that if we take the union of these subdivisions over all such products, then  $d_{1,2}(z_{1}, z_{2})$ an affine Morse--Bott function on $\big(K(n)\big)^{2}$.
\end{proof}

It follows that $\theta_{2}$ is an affine Morse--Bott function on $\big(K_{g,0}(n)\big)^{2}$, the the resulting subdivision might seem a bit mysterious.
We illustrate this subdivision for the torus $K_{1,0}(n)$, showing that the resulting polytopal complex is relatively simple.

\begin{exam}
Given a point $z_{1}\in K_{1,0}(n)$ consider the $l^{\infty}$-ball of radius $r\le\frac{n+1}{2}$ centered at $z_{1}$.
Such a ball has four sides, and the distance to any point on a side (other than the corners) is determined via a single measurement, e.g., the distance from $z_{1}$ to a point $z_{2}$ on the top side of the ball is measured by the vertical length of the straight-line path that starts at $z_{1}$ and goes up to $z_{2}$.
Thus, the horizontal, resp. vertical, sides of the ball yield hyperplanes $y_{1}=y_{2}$, resp. $x_{1}=x_{2}$ that tell us whether distance should be measured by going up or down, resp. left or right.
At the corners of such a ball the corresponding vertical and horizontal methods of measuring distance yield the same result.
Varying $z_{1}$ and $r$ we see that the top left and bottom right corners yield the hyperplane $x_{1}-y_{1}=y_{2}-x_{2}$ in $K_{1,0}(n)$, whereas the top right and bottom left corners yield the hyperplane $x_{1}-y_{1}=x_{2}-y_{2}$.
When $r=\frac{n+1}{2}$ the top and bottom sides of the $l^{\infty}$-ball overlap and the left and right sides do as well. 
This corresponds to the hyperplanes $y_{1}=y_{2}+\frac{n+1}{2}$ and $x_{1}=x_{2}+\frac{n+1}{2}$, respectively.
These six hyperplanes divide $\big(K_{1,0}(n)\big)^{2}$ into eight polytopal regions on the interior of which $d_{\infty}(z_{1}, z_{2})$ is defined via a single horizontal or vertical measurement.
Moreover, they determine whether distance should be measured via moving from $z_{1}$ to $z_{2}$ by going up or down, right or left; see Figure \ref{fig:K10(3)hyperplanes}.
Taking the common refinement of the resulting subdivision with the natural subdivision of $K_{1,0}(n)$ by the unit-squares ensures that $d_{\infty}(z_{1}, z_{2})$ is affine Morse--Bott.

\begin{figure}[h]
    \centering
    \includegraphics[width=.5\linewidth]{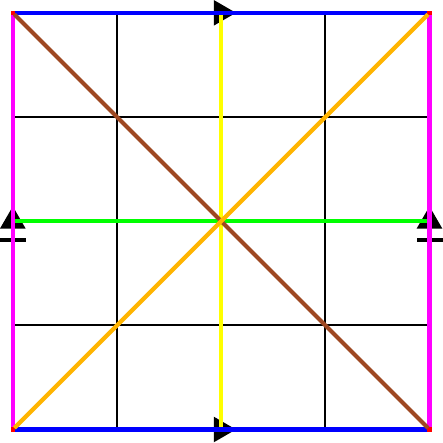}
    \caption{
    The six hyperplanes that divide $\big(K_{1,0}(3)\big)^{2}$ that make $d_{\infty}(z_{1}, z_{2})$ an affine Morse--Bott function.
    We have fixed $z_{1}$ at the ``center'' of $K_{1,0}(3)$.
    The green line is the hyperplane $y_{1}=y_{2}$, the blue line is the hyperplane $y_{1}=y_{2}+2$, the yellow line is the hyperplane $x_{1}=x_{2}$, the pink line is the hyperplane $x_{1}=x_{2}+2$, the orange line is the hyperplane $x_{1}-y_{1}=x_{2}-y_{2}$, and the brown line is the hyperplane $x_{1}-y_{1}=y_{2}-x_{2}$.
    Note that these hyperplanes divide $K_{1,0}(3)$ into $8$-regions, on each of which $d_{\infty}(z_{1}, z_{2})$ is given by a single horizontal or vertical distance measurement.
    }
    \label{fig:K10(3)hyperplanes}
\end{figure}
\end{exam}

In the next proposition we confirm that $\theta_{2}$ is affine Morse--Bott on $\big(K_{g,1}(n)\big)^{2}$.

\begin{prop}\label{tautological function is affine Morse--Bott on 2 points}
The tautological function $\theta_{2}:\big(K_{g,1}(n)\big)^{2}\to \R$ is affine Morse--Bott with respect to some subdivision $Q$ of the complex $\big(K_{g,1}(n)\big)^{2}$.
\end{prop}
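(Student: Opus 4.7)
The plan is to express $\theta_{2}$ as the minimum of three affine Morse--Bott functions and then refine the resulting subdivision so that, on each cell, the minimum is realized by a single one of the three. By definition,
\[
\theta_{2}(\textbf{z}) = \min\left\{\tfrac{1}{2}d_{1,2}(\textbf{z}),\ d_{1}(\textbf{z}),\ d_{2}(\textbf{z})\right\}.
\]
Proposition \ref{distanct function between two points is affine Morse--Bott} already supplies a subdivision $Q_{2}$ of $\big(K_{g,1}(n)\big)^{2}$ making $d_{1,2}$ affine Morse--Bott, so the two remaining ingredients are (i) producing analogous subdivisions for $d_{1}$ and $d_{2}$, and (ii) refining the common subdivision by hyperplanes where two of the three functions coincide.

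First I would adapt the argument of Proposition \ref{distanct function between two points is affine Morse--Bott} to handle $d_{i}:\big(K_{g,1}(n)\big)^{2}\to \R$. Since $d_{i}$ depends only on $z_{i}$, it suffices to subdivide $K_{g,1}(n)$ itself so that distance to $\partial K_{g,1}(n)$ is affine on each cell, and then pull back via the $i$-th projection. Mimicking the proof of Proposition \ref{distanct function between two points is affine Morse--Bott}, I would fix a closed $2$-cell $C$ of $K_{g,1}(n)$ and consider the $l^{\infty}$-ball centered at a varying point $z\in C$: the radius at which it first meets $\partial K_{g,1}(n)$ is governed by which side of the boundary is hit, and on each region determined by this choice the distance is a single horizontal or vertical measurement, hence affine in $z$. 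The boundary sides that can be hit from $C$ are finite in number, and the loci on $C$ where two such sides are equidistant are affine hyperplanes. Lemma \ref{section of small set with boundary} lets me lift the relevant neighborhoods consistently into $\tilde{K}_{g,1}(n)\subset \R^{2}$ so that each candidate boundary segment becomes an honest Euclidean line segment, guaranteeing that these separating loci really are affine. Subdividing $C$ along these hyperplanes and taking the product with $K_{g,1}(n)$ gives a subdivision of $\big(K_{g,1}(n)\big)^{2}$ with respect to which $d_{i}$ is affine Morse--Bott.

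Next I would take the common refinement $Q'$ of $Q_{2}$ and the two subdivisions produced in the previous step; on every cell of $Q'$ each of $\tfrac{1}{2}d_{1,2}$, $d_{1}$, $d_{2}$ restricts to an affine function. The pointwise minimum of affine functions is only piecewise affine, so I would obtain the desired subdivision $Q$ by refining $Q'$ once more, adding within each cell the affine hyperplanes $\tfrac{1}{2}d_{1,2}=d_{1}$, $\tfrac{1}{2}d_{1,2}=d_{2}$, and $d_{1}=d_{2}$, and then further chopping to convex polytopal cells. On each cell of $Q$ a single one of the three functions achieves the minimum everywhere, so $\theta_{2}$ agrees on that cell with that one affine function and extends to an affine function on all of $\R^{l}$; hence $\theta_{2}$ is affine Morse--Bott with respect to $Q$.

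The main obstacle I expect is controlling the distance-to-boundary computation near the removed neighborhoods of the cone point, where a straight-line measurement from a point in $K_{g,1}(n)$ to $\partial K_{g,1}(n)$ can wrap around the missing disk in a combinatorially delicate way, potentially making the ``closest boundary segment'' function not affine without further subdivision. The polar-coordinate section of Lemma \ref{section of small set with boundary} is what resolves this: it unwraps a neighborhood of the cone region onto a planar annulus $A(n)\subset \R^{2}$, reducing the measurement to the planar situation already handled in Proposition \ref{distanct function between two points is affine Morse--Bott} and ensuring that only finitely many affine hyperplanes are introduced in each product cell.
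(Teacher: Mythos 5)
Your proof is correct and takes essentially the same approach as the paper: subdivide $K_{g,1}(n)$ so that the distance-to-boundary function is affine on each cell, pull back via the two projections, take the common refinement with the subdivision $Q_2$ from Proposition \ref{distanct function between two points is affine Morse--Bott}, and then refine once more along the hyperplanes where pairs of $\tfrac{1}{2}d_{1,2}$, $d_{1}$, $d_{2}$ agree. The only cosmetic difference is that the paper writes out the separating hyperplanes in $K_{g,1}(n)$ explicitly as a finite list of horizontal, vertical, and diagonal lines (the subdivision $D_{g,1}(n)$), whereas you derive their existence abstractly by re-running the argument of Proposition \ref{distanct function between two points is affine Morse--Bott} and invoking Lemma \ref{section of small set with boundary} near the removed disks.
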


\begin{proof}
Recall that we get a polytopal structure on $K_{g,1}(n)$ by identifying the sides of a polygonal region in $\R^{2}$ and including the $\frac{1}{2}$ offset integer grid in $\R^{2}$; see Definition \ref{K(n) definition}.
We further refine this subdivision of $K_{g,1}(n)$ by including the horizontal lines $y=0$ and $y=\frac{n+1}{2}$, the vertical lines $x=i\Big(\frac{n+1}{2}\Big)$ for $i\in \{0, \dots, 4g-3\}$, and the diagonal lines $y=x-i(n+1)$ and $y=-x+i(n+1)+n+1$ for $i\in \{0,\dots, 2g-2\}$; see Figure \ref{fig:K21(3)firstrefinement}.
Call this subdivision $D_{g,1}(n)$.
This subdivision corresponds to which side of an $l^{\infty}$-ball around a point $z\in K_{g,1}(n)$ first touches $\partial K_{g,1}(n)$.
Given this structure, the function $d_{\infty}\big(z, \partial K_{g,1}(n)\big)$ is affine Morse--Bott.
We will show that the common refinement of this subdivision and that of Proposition \ref{distanct function between two points is affine Morse--Bott} can be further refined to give $\theta_{2}$ the structure of an affine Morse--Bott on $\big(K_{g,1}(n)\big)^{2}$

\begin{figure}[h]
    \centering
    \includegraphics[width=.75\linewidth]{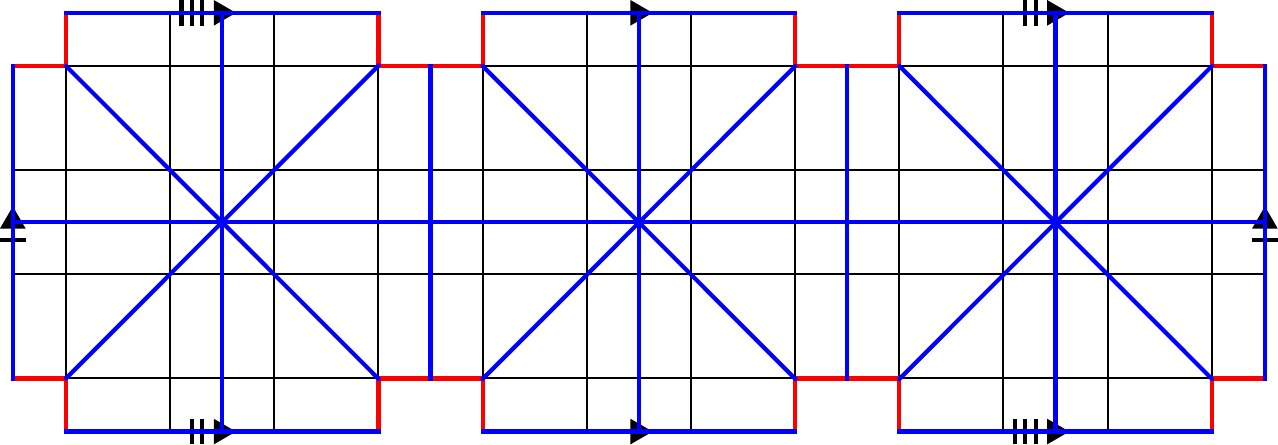}
    \caption{
    The subdivision $D_{2,1}(3)$ of $K_{2,1}(3)$ with the added hyperplanes colored blue.
    Note that each of the resulting cells are polytopes, and in each open $2$-cell $C$ the distance from $z\in C$ to $\partial K_{2,1}(3)$ is determined by a single horizontal (or vertical) measurement.
    Moreover, this distance is a linear function of the horizontal, resp. vertical, position of $z$ in $C$.
    }
    \label{fig:K21(3)firstrefinement}
\end{figure}

Let $D^{2}_{g,1}(n)$ be the product of the two copies of the complex $D_{g,1}(n)$.
By the previous paragraph, each particular metric function $d_{1}$ and $d_{2}$ is affine Morse--Bott on $D^{2}_{g,1}(n)$.
Refine this subdivision by taking the refinement with the subdivision $Q_{2}$ defined in Proposition \ref{distanct function between two points is affine Morse--Bott} to ensure that the particular metric function $d_{1,2}$ is also affine Morse--Bott.
Call this subdivision $Q'_{2}$.
Since the particular metric functions $d_{1}$, $d_{2}$, and $d_{1,2}$ are affine Morse--Bott on each of the $4$-cells of $Q'_{2}$, moving $z_{1}$ or $z_{2}$ results in affine change to $d_{1}$ and $d_{1,2}$ or $d_{1}$ and $d_{1,2}$, respectively.
It follows that the region of any $4$-cell in which two of these are equal is defined via a hyperplane, so by further subdividing by these hyperplanes we see that we get a refinement $Q$ on which $\theta_{2}$ is uniquely determined by exactly one of $d_{1}$, $d_{2}$, and $d_{12}$ on each open $4$-cell, and at the intersection of the closures of any two of these $4$-cells the corresponding particular metric functions agree.
Since the particular metric functions are affine Morse--Bott, $\theta_{2}$ is defined by exactly one particular metric function on each open $4$-cell of $Q$, and on adjacent $4$-cells the corresponding particular metric functions agree on their intersection of their boundaries, we see that $\theta_{2}$ is affine Morse--Bott on $Q$.
\end{proof}

Finally, we use the two previous propositions to prove that $\theta_{m}$ is affine Morse--Bott for all $m$, a fact that we will need to prove Lemma \ref{retract from unit square to small square}, which shows that we can retract $SF_{m}\big(K(n), r_{1}\big)$ onto $SF_{m}\big(K(n), r_{1}\big)$ for small $r_{1}<r_{2}$.

\begin{prop}\label{tautological function is affine Morse--Bott}
If $K(n)$ is a complex of the form $K_{g,0}(n)$ or $K_{g,1}(n)$, the tautological function $\theta_{m}:\big(K(n)\big)^{m}\to \R$ is affine Morse--Bott with respect to some subdivision $S$ of the complex $\big(K(n)\big)^{m}$.
\end{prop}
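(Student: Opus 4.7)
The plan is to iterate and combine the constructions used in Propositions \ref{distanct function between two points is affine Morse--Bott} and \ref{tautological function is affine Morse--Bott on 2 points}. Recall that $\theta_m$ is the minimum of the particular metric functions $\tfrac{1}{2}d_{i,j}$ for $1 \le i < j \le m$ (and of $d_k$ for $1 \le k \le m$ in the boundary case), so it will suffice to produce a single subdivision $S$ of $(K(n))^m$ on which every particular metric function is simultaneously affine Morse--Bott, and then refine once more so that on each top-dimensional cell of $S$ the minimum defining $\theta_m$ is attained by exactly one of these affine functions.

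The first step is to build subdivisions that handle each particular metric function individually. For each unordered pair $\{i,j\}$, let $\pi_{ij}:(K(n))^m \to (K(n))^2$ denote the projection onto the $i$-th and $j$-th factors, and pull back the subdivision $Q_2$ of Proposition \ref{distanct function between two points is affine Morse--Bott} along $\pi_{ij}$; this gives a polytopal subdivision $Q_{ij}$ of $(K(n))^m$ on which the composition $d_{i,j} = d_\infty \circ \pi_{ij}$ is affine Morse--Bott, since affineness is preserved under pullback by an affine projection. In the boundary case, for each $k$ pull back the subdivision $D_{g,1}(n)$ of $K_{g,1}(n)$ used in the proof of Proposition \ref{tautological function is affine Morse--Bott on 2 points} along the $k$-th coordinate projection to obtain a subdivision $D_k$ on which $d_k$ is affine Morse--Bott.

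The second step is to take the common refinement $S'$ of all the $Q_{ij}$ (and all the $D_k$, when $b=1$), together with the product cube structure of $(K(n))^m$. Because each constituent subdivision is polytopal and cut out by finitely many hyperplanes, $S'$ is itself a polytopal subdivision of $(K(n))^m$, and every particular metric function is affine Morse--Bott on $S'$: its restriction to each closed cell of $S'$ extends to an affine function on $\R^{2m}$ because the restriction already extends affinely after the corresponding projection.

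The third step produces the final subdivision $S$ from $S'$ by cutting along the hyperplanes where two particular metric functions coincide. On any open cell $c$ of $S'$ each particular metric function is the restriction of an affine function on $\R^{2m}$, so the locus in $\overline{c}$ where two such functions agree is an affine hyperplane intersected with $\overline{c}$; subdividing $\overline{c}$ by the finitely many such hyperplanes (for all pairs of particular metric functions) and then triangulating the resulting convex pieces into polytopes yields $S$. On each open top-dimensional cell of $S$, exactly one particular metric function realizes the minimum in the definition of $\theta_m$, so $\theta_m$ agrees there with a single affine function; and along a shared face of two adjacent top-dimensional cells the two selected functions agree by construction, so $\theta_m$ extends to an affine function on each closed cell of $S$. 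Hence $\theta_m$ is affine Morse--Bott on $S$.

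The only real subtlety is purely bookkeeping: one must check that the successive refinements actually produce a bona fide affine polytope complex in the sense of the three-part definition recalled before Proposition \ref{distanct function between two points is affine Morse--Bott}, i.e., that the pullbacks along the projections $\pi_{ij}$ behave well at the identifications coming from the translation-surface gluings of $K(n)$. This is where working with the covering complex $\tilde K(n)$ and Lemmas \ref{section of small set no boundary}--\ref{section of small set with boundary} is helpful: locally, all particular metric functions lift to affine functions on $\tilde K(n)^m$, so the hyperplane cuts descend to well-defined polytopal cuts on $(K(n))^m$, and the characteristic maps of $S$ inherit the affine structure automatically.
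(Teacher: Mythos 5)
Your proposal is correct and follows essentially the same route as the paper: pull back (or extend) the pairwise subdivisions of Propositions \ref{distanct function between two points is affine Morse--Bott} and \ref{tautological function is affine Morse--Bott on 2 points} along all coordinate pairs, take the common refinement, and then cut by the affine hyperplanes where two particular metric functions agree so that $\theta_m$ is given by a single affine function on each top-dimensional cell. The only cosmetic difference is that you treat the boundary-distance functions $d_k$ via separate single-factor pullbacks of $D_{g,1}(n)$, whereas the paper simply reuses the already-refined subdivision $Q$ of Proposition \ref{tautological function is affine Morse--Bott on 2 points} in the $K_{g,1}$ case.
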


\begin{proof}
If $K(n)=K_{g,0}(n)$, let $Q$ be the decomposition of $\big(K(n)\big)^{2}$ defined in Proposition \ref{distanct function between two points is affine Morse--Bott}, whereas if $K(n)=K_{g,1}(n)$,  let $Q$ be the decomposition of $\big(K(n)\big)^{2}$ defined in Proposition \ref{tautological function is affine Morse--Bott on 2 points}.
For each pair $k,l$ such that $1\le k< l\le n $ take a copy of the subdivision $Q$ of the product $K(n)_{k}\times K(n)_{l}$, and denote it by $Q_{k,l}$.
We extend $Q_{k,l}$ to a subdivision of $\big(K(n)\big)^{m}$ in the obvious way.
Taking the common refinement of all these subdivisions yields a subdivision $S'$ of $\big(K(n)\big)^{m}$.
All of the particular metric functions are affine Morse--Bott on the cells of this subdivision.
Since these functions are affine we can further subdivide $S'$ via hyperplanes to get a polytopal structure such that on each open $2m$-cell of $S$ the tautological function is defined by a single particular metric function; moreover, the defining particular metric functions agree on the mutual boundaries of the $2m$-cells.
Therefore, $S$ is a polytopal decomposition of $K(n)$ such that $\theta_{m}$ is an affine Morse--Bott function.
\end{proof}

Now that we know that $\theta_{m}(\textbf{z})$ is affine Morse--Bott on $\big(K(n)\big)^{m}$, we describe notions of regular and critical values.
We will prove that the smallest nonzero critical value of $\theta_{m}(\textbf{z})$ is at least $\frac{1}{2}$; this will yield the homotopy equivalence of $F_{m}(\Sigma_{g,b})$ and $SF_{m}\big(K_{g,b}(n),d\big)$ for all $d\le 1$.
We come to these definitions by considering the tangent space of a neighborhood of a configuration $\textbf{z}$ in $F_{m}\big(K(n)\big)\subset \big(K(n)\big)^{m}$ and determining whether we can increase $\theta_{m}$ for all points in this neighborhood by flowing along some smooth vector field.
Namely, for each configuration $\textbf{z}$ and each particular metric function $f$, we will define a subset of the tangent space of $F_{m}\big(K(n)\big)$ at $\textbf{z}$, called the \emph{open cone} of $f$ at $\textbf{z}$, which we denote by $\text{Cone}_{\textbf{z}}(f)$.
We then take the intersection of these cones to get the open cone of $\theta_{m}$, which we will relate to its critical values.

First, we must make a few remarks about the notion of the tangent space of $\big(K(n)\big)^{m}$.
If $K(n)$ is of the form $K_{1,0}(n)$ or $K_{g,1}(n)$, every point $z$ in the interior of $K(n)$ has a well-defined tangent space isomorphic to $\R^{2}$, and it follows that $T_{\textbf{z}}F_{m}\big(K(n)\big)$ is well-defined and isomorphic to $\R^{2m}$ for every configuration not containing points in $\partial K(n)$.
Note that for $g>1$, the tangent space $TK_{g,0}(n)$ is not well-defined due to the cone point $p$.
It follows that the tangent space of $F_{m}\big(K_{g,0}(n)\big)$ is not well-defined either.
Fortunately, there is a simple workaround: If $z\neq p$, set $T_{z}K_{g,0}(n)\simeq \R^{2}$ as usual, and at $p$ set $T_{p}K_{g,0}(n)$ to be the trivial subspace $\{0\}\subset \R^{2}$.
Doing so lets $T_{\textbf{z}}F_{m}\big(K(n)\big)$ be the usual $\R^{2m}$ if the configuration $\textbf{z}$ does not contain $p$, and sets $T_{\textbf{z}}F_{m}\big(K(n)\big)$ to be $\R^{2m-2}\subset \R^{2m}$ if $p\in \textbf{z}$, by demanding that the $2$-dimensional summand corresponding to the point at $p$ be $\{0\}$.
While this definition warrants some concern, it will not be an issue due to the nature of the vector field we construct.
Namely, we construct a vector field on $F_{m}\big(K_{g,0}(n)\big)$ such that as a point in a configuration approaches $p$, the corresponding components of the vector field tend to $0$.
Alternatively, one could stratify $F_{m}\big(K_{g,0}(n)\big)$ into configurations that contain $p$ and those that do not.
The former has tangent space isomorphic to $\R^{2m-2}$, while the latter has tangent space isomorphic to $\R^{2m}$; moreover, the flow arising from our smooth vector field on these tangent spaces preserves this stratification.
With this in mind, we turn to determining which small movements of points in a configuration lead to increases in $\theta_{m}$, as successively performing such movements would yield a retraction of $F_{m}\big(K(n)\big)$ onto $SF_{m}\big(K(n)\big)$.

To figure out which movements of the points in $\textbf{z}$ lead to an increase in $\theta_{m}$, we consider the subspaces of $T_{\textbf{z}}F_{m}\big(K(n)\big)$ that correspond to movements of points increasing the particular metric functions.
For a particular metric function $f$, we write $\text{Cone}_{\textbf{z}}(f)$ for the subspace of $T_{\textbf{z}}F_{m}\big(K(n)\big)$ corresponding to movements of points in $\textbf{z}$ increasing $f$.

If $f$ is of the form $d_{i,j}$ and $d_{i,j}(\textbf{z})=d_{\infty}(z_{i},z_{j})$ is realized by only one of the horizontal and vertical distances between $z_{i}$ and $z_{j}$, this distance is not maximal, and neither $z_{i}$ nor $z_{j}$ is $p$, then $\text{Cone}_{\textbf{z}}(f)$ is isomorphic (via an affine map) to $\R^{2m-4}\times G$ where $G$ is the open half-space in $\R^{4}\cong \R^{2}\times \R^{2}$ corresponding to movements of the points $z_{i}$ and $z_{j}$ that increase the Cheybshev distance between the two points.
If one of $z_{i}$ and $z_{j}$ is $p$, then $\text{Cone}_{\textbf{z}}(f)$ is isomorphic (via an affine map) to $\R^{2m-4}\times F$ where $F$ is the open half-space in $\R^{2}\subset \R^{4}\cong 0\times \R^{2}\subset \R^{2}\times \R^{2}$ corresponding to movements of the point not at $p$ that increase the Cheybshev distance between the two points.
If $d_{\infty}(z_{i}, z_{j})$ is simultaneously realized by both the vertical and horizontal distances between $z_{i}$ and $z_{j}$ and neither is $p$, then $\text{Conf}_{\textbf{z}}(f)$ is isomorphic (via an affine map) to $\R^{2m-4}\times (G_{1}\times \R^{2})$, where $G_{1}$ is the open sector in $\R^{2}$ with angle $\frac{3\pi}{2}$ corresponding to movements of $z_{i}$ and $z_{j}$ that increase the Chebyshev distance between them.
If one of $z_{i}$ and $z_{j}$ is $p$, then $\text{Conf}_{\textbf{z}}(f)$ is isomorphic (via an affine map) to $\R^{2m-4}\times F_{1}\times (0\in \R^{2})$, where $F_{1}$ is the open sector in $\R^{2}$ of angle $\frac{3\pi}{2}$ corresponding to movements of the non-$p$-point that that increase the Chebyshev distance between the pair.

Similarly, if $f$ of the form $d_{i}$, the Chebyshev distance between $z_{i}$ and $\partial K_{g,1}(n)$ is realized by an interval that is not a single point in the boundary, and $d\big(z_{i},\partial K_{g,1}(n)\big)$ is not maximal, then $\text{Cone}_{\textbf{z}}(f)$ is isomorphic (via an affine map) to $\R^{2m-2}\times H$, where $H$ is an open half-plane in $\R^{2}$ corresponding to movements of the point $z_{i}$ that increase the Chebyshev distance between it and the boundary. 
If $d_{i}(\textbf{z})=d_{\infty}\big(z_{i}, \partial K_{g,1}(n)\big)$ is realized by a single point---necessarily a corner of $\partial K_{g,1}(n)$---then $\text{Cone}_{\textbf{z}}(f)$ is isomorphic (via an affine map) to $\R^{2m-2}\times G_{1}$, where $G_{1}$ is the open sector in $R^{2}$ of angle $\frac{3\pi}{2}$ corresponding to movements of $z_{i}$ that increase $d_{\infty}\big(z_{i}, \partial K_{g,1}(n)\big)$.

If $f$ is either type of particular metric function and there is no way to increase $f$ by a local move, we set $\text{Cone}_{\textbf{z}}(f)=\emptyset$.

Given a point $\textbf{z}\in F_{m}\big(K(n)\big)$, we define the \emph{open cone} of $\theta_{m}$ at $\textbf{z}$ by setting $\text{Cone}_{\textbf{z}}(\theta_{m})$ to be the intersection section of the open cones of the particular metric functions realizing the $\theta_{m}(\textbf{z})$ in $\R^{2m}$, i.e.,
\[
\text{Cone}_{\textbf{z}}(\theta_{m}):=\text{Cone}_{\textbf{z}}(f_{1})\cap\cdots\cap\text{Cone}_{\textbf{z}}(f_{l}).
\]
It follows that $\text{Cone}_{\textbf{z}}(\theta_{m})$ consists of the vectors in $T_{\textbf{z}}F_{m}\big(K(n)\big)$ that correspond to the movements of the points in $\textbf{z}$ that increase $\theta_{m}$.

If $\textbf{z}\in \big(K(n)\big)^{m}$ is such that $\theta_{m}(\textbf{z})>0$, then we say that it is a \emph{regular point} of $\theta_{m}$ if there is a neighborhood $U_{\textbf{z}}$ of $\textbf{z}$ in $F_{m}\big(K(n)\big)$ such that for every point $\textbf{z}'\in U_{\textbf{z}}$, the open cone of $\theta_{m}$ at $\textbf{z}'$ is nonempty.
Otherwise, we say that $\textbf{z}$ a \emph{critical point} of $\theta_{m}$.
For convenience, all configurations $\textbf{z}$ in $\Delta\cup \partial\Big(\big(K(n)\big)^{m}\Big)\subset \big(K(n)\big)^{m}$ are defined to be critical, where $\Delta$ is the fat diagonal.

\begin{figure}[h]
    \centering
    \includegraphics[width=.75\linewidth]{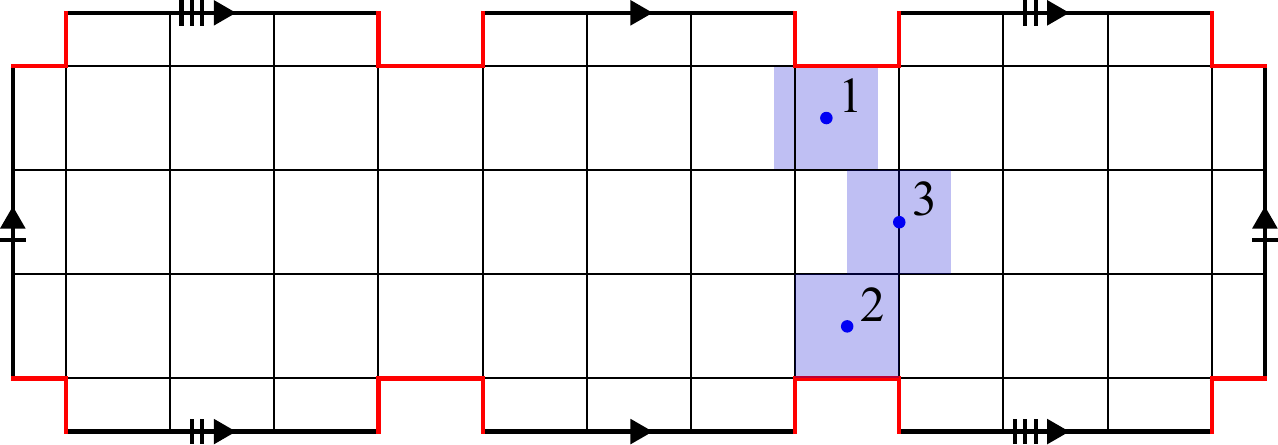}
    \caption{
    A critical point of the function $\theta_{3}:\big(K_{2,1}(3)\big)^{3}\to \R$ with squares of side length twice the critical value.
    Note that the squares are tangent to each other and the sides of $K_{2,1}(3)$.
    There is no way to slightly move the points and get a larger value of $\theta_{3}$, as vertical movement would make smash the squares into each other or the boundary of $K_{2,1}(3)$, and small horizontal movements keep the squares tangent to each other and the boundary.
    }
    \label{fig:K21(3)criticalpoint}
\end{figure}

One can think of regular points of $\theta_{m}$ as the configurations $\textbf{z}$ for which there exists a well-defined way to spread out the $m$ points constituting $\textbf{z}$ that that can be extended to nearby configurations.
We will see that if $K(n)$ is one of the complexes $K_{g,0}(n)$ or $K_{g,1}(n)$ and $m\le n$, then all $r\in \big(0,\frac{1}{2}\big)$ are regular values of $\theta_{m}$.
We will use this to prove $F_{m}(\Sigma_{g,b})$ is homotopy equivalent to $SF_{m}\big(K_{g,b}(n)\big)$.

We begin by noting that if two points are not close to each other in a configuration, then they are not close to each other in nearby configurations.
To formalize this idea we recall the definition of the contact graph.
If $K(n)$ is of the form $K_{g,0}(n)$ or $K_{g,1}(n)$ and $\textbf{z}$ is a configuration in $F_{m}\big(K(n)\big)$ such that $\theta_{m}(\textbf{z})=r$, the \emph{contact graph of $\textbf{z}$}, which we denote by $\Gamma_{\textbf{z}}$, is the embedded graph in $K$ such that there is an \emph{internal vertex} at each coordinate $z_{i}$ of $\textbf{z}$ and an \emph{internal edge} of Chebyshev length $2r$ between each pair of vertices $z_{i}$ and $z_{j}$ such that $d_{\infty}(z_{i}, z_{j})=2r$.
Additionally, if the $l^{\infty}$-ball of radius $r$ centered at $z_{i}$ is tangent to $\partial K$ along a closed interval, then the center of this interval is an \emph{external vertex} and there is an \emph{external edge} of Chebyshev length $r$ between this external vertex and the internal vertex at $z_{i}$.
For an example, see Figure \ref{fig:contactgraphK11(7)}.

\begin{figure}[h]
    \centering
    \includegraphics[width=.5
    \linewidth]{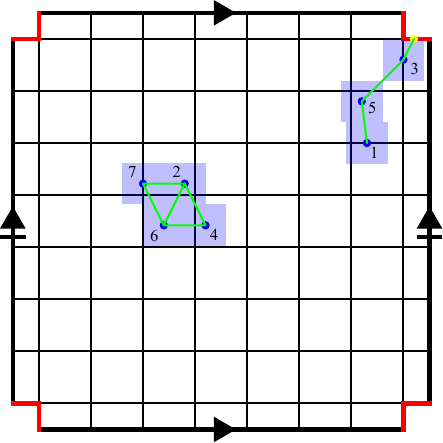}
    \caption{
The contact graph of a configuration $\textbf{z}$ in $F_{7}\big(K_{1,1}(7)\big)$, with $\theta_{7}(\textbf{z})=\frac{2}{5}$.
We have colored the internal vertices, which correspond to the points in the configuration, blue, the sole external vertex yellow, and the edges green.
We have used blue to draw the $l^{\infty}$-balls of radius $\frac{2}{5}$ around the points constituting $\textbf{z}$.}
    \label{fig:contactgraphK11(7)}
\end{figure}

Given a contact graph $\Gamma_{\textbf{z}}$ in $K(n)$ one can interpret its edges as vectors.
Namely, if $z_{i}=(x_{i}, y_{i})$ and $z_{j}=(x_{j}, y_{j})$ are connected by an internal edge, then by Lemmas \ref{section of small set no boundary} and \ref{section of small set with boundary} we can lift this edge to $\tilde{K}(n)$ and then project the image of this lift to $\R^{2}$.
The lift is an isometry, though the projection might not be if the edge passes through $p$, the point of non-zero curvature in $K_{g,0}(n)$---this does not depend on choice of lift.
Let $\hat{z}_{i}=(\hat{x}_{i}, \hat{y}_{i})$ and $\hat{z}_{j}=(\hat{x}_{j}, \hat{y}_{j})$ be the images of $z_{i}$ and $z_{j}$ in $\R^{2}$.
If the projection is a horizontal, resp. vertical, isometry, then the horizontal, resp. vertical, component of the edge is represented by the $2$-vector $(\hat{x}_{i}-\hat{x}_{j}, \hat{x}_{j}-\hat{x}_{i})$, resp. $(\hat{y}_{i}-\hat{y}_{j}, \hat{y}_{j}-\hat{y}_{i})$.
If the projection is not a horizontal, resp. vertical, isometry, then the horizontal, resp. vertical, component of the edge is represented by the $2$-vector $(\hat{x}_{i}, \hat{x}_{j})$ and $(\hat{y}_{i}, \hat{y}_{j})$.
Similarly, if $z_{i}=(x_{i}, y_{i})$, an internal vertex, is connected to an external vertex $w=(u, v)$, then we consider the $1$-vectors $(\hat{x}_{i}-\hat{u})$ and $(\hat{y}_{i}-\hat{v})$ arising from the projecting the image of the lift of this external edge to $\R^{2}$.

This vector interpretation of the edges of $\Gamma_{\textbf{z}}$ lends itself well to determining whether a vector $\vec{\textbf{v}}_{\textbf{z}}\in T_{\textbf{z}}F_{m}\big(K(n)\big)$ lies in $\text{Cone}_{\textbf{z}}(\theta_{m})$.
Namely, if there be an internal edge in $\Gamma_{\textbf{z}}$ between the internal vertices $z_{i}$ and $z_{j}$, i.e., $\frac{1}{2}d_{\infty}(z_{i}, z_{j})=\theta_{m}(\textbf{z})$ and $d_{\infty}(z_{i}, z_{j})$ is realized by the horizontal, resp. vertical, distance between $z_{i}$ and $z_{j}$, and the projection is a horizontal, resp. vertical, isometry, then $\vec{\textbf{v}}_{\textbf{z}}\in \text{Cone}_{\textbf{z}}(d_{i,j})$ if and only if $\big\langle (\vec{\textbf{v}}_{\textbf{x}, \textbf{i}},\vec{\textbf{v}}_{\textbf{x}, \textbf{j}}), (\hat{x}_{i}-\hat{x}_{j}, \hat{x}_{j}-\hat{x}_{i}) \big\rangle>0$, resp. $\big\langle (\vec{\textbf{v}}_{\textbf{y}, \textbf{i}},\vec{\textbf{v}}_{\textbf{y}, \textbf{j}}), (\hat{y}_{i}-\hat{y}_{j}, \hat{y}_{j}-\hat{y}_{i}) \big\rangle>0$.
Here $\vec{\textbf{v}}_{\textbf{x}}$, resp. $\vec{\textbf{v}}_{\textbf{y}}$, denotes the horizontal, resp. vertical, component of $\vec{\textbf{v}}_{\textbf{z}}$.
If the projection is not an horizontal, resp. vertical, isometry,  then $\vec{\textbf{v}}_{\textbf{z}}\in \text{Cone}_{\textbf{z}}(d_{i,j})$ if and only if $\big\langle (\vec{\textbf{v}}_{\textbf{x}, \textbf{i}},\vec{\textbf{v}}_{\textbf{x}, \textbf{j}}), (\hat{x}_{i}, \hat{x}_{j}) \big\rangle>2\theta_{m}(\textbf{z})$, resp. $\big\langle (\vec{\textbf{v}}_{\textbf{y}, \textbf{i}},\vec{\textbf{v}}_{\textbf{y}, \textbf{j}}), (\hat{y}_{i}, \hat{y}_{j}) \big\rangle>2\theta_{m}(\textbf{z})$.

Similarly, let $z_{i}$ be an internal vertex of $\Gamma_{\textbf{z}}$ connected by an external edge to the external vertex $w=(u,v)\in\partial K_{g,1}(n) $, that is, $d_{\infty}\big(z_{i},\partial K_{g,1}(n)\big)=d_{\infty}(z_{i},w)=\theta_{m}(\textbf{z})$.
If $d_{\infty}(z_{i},w)$ is realized by the horizontal, resp. vertical, distance between $z_{i}$ and $w$, then $\vec{\textbf{v}}_{\textbf{z}}\in \text{Cone}_{\textbf{z}}(d_{i})$ if and only if 
$\big\langle (\vec{\textbf{v}}_{\textbf{x}, \textbf{i}}), (\hat{x}_{i}-\hat{u}) \big\rangle>0$, resp. $\big\langle (\vec{\textbf{v}}_{\textbf{y}, \textbf{i}}), (\hat{y}_{i}-\hat{v}) \big\rangle>0$.

In general, these relations are equivalent to the statement that the horizontal, resp. vertical, distance between adjacent vertices of the contact graph increases if the corresponding points $z_{i}$ and $z_{j}$ in the configuration move a little in the directions $\vec{\textbf{v}}_{\textbf{x}, \textbf{i}}$ and $\vec{\textbf{v}}_{\textbf{x}, \textbf{j}}$, resp. $\vec{\textbf{v}}_{\textbf{y}, \textbf{i}}$ and $\vec{\textbf{v}}_{\textbf{y}, \textbf{j}}$.
It follows that $\vec{\textbf{v}}_{\textbf{z}}$ is in $\text{Cone}_{\textbf{z}}(\theta_{m})$ if and only if these dot product relations hold for every vector coming from the contact graph $\Gamma_{\textbf{z}}$.

To construct our flow that leads to a deformation retract of $F_{m}\big(K(n)\big)$ onto $SF_{m}\big(K(n)\big)$, we find a function increasing $\theta_{m}$ on a neighborhood of every $\textbf{z}$ such that $\theta_{m}(\textbf{z})<\frac{1}{2}$.
It would be convenient if nearby configurations to behaved similarly.
This is the case, though in order to formalize this we need a notion of nearby configurations.
Fortunately, the Chebyshev metric on $K(n)$ induces a metric $d$ on $\big(K(n)\big)^{m}$, hence $F_{m}\big(K(n)\big)$, i.e., 
\[
d(\textbf{z}, \textbf{z}'):=\max\big\{d_{\infty}(z_{i}, z'_{i})\big\},
\]
and we use this metric to define open balls in $F_{m}\big(K(n)\big)$.

In the following proposition we show that if one takes a small enough ball $B_{R}(\textbf{z})$ around a configuration  $\textbf{z}\in F_{m}(K)$, then, after forgetting the metric, there is a natural inclusion of $\Gamma_{\textbf{z}'}$ into $\Gamma_{\textbf{z}}$ for all $\textbf{z}'\in B_{R}(\textbf{z})$.
This proves that if two points are not close in a configuration, then they are not close in nearby configurations.

\begin{prop}\label{nearby points have similar contact graphs}
For $K(n)$ of the form $K_{g,0}(n)$ or $K_{g,1}(n)$, let $\textbf{z}$ be a configuration in $F_{m}\big(K(n)\big)$ such that $\theta_{m}(\textbf{z})=r<\frac{1}{2}$, and set 
\[
r':=\frac{1}{2}\min_{i, j}\big\{d_{\infty}(z_{i}, z_{j})|d_{\infty}(z_{i}, z_{j})\neq 2r\big\} \indent\text{and}\indent r'':=\min_{i}\Big\{d_{\infty}\big(z_{i}, \partial K(n)\big)|d_{\infty}\big(z_{i}, \partial K(n)\big)\neq r\Big\}.
\]
If $R=\frac{1}{16}\min\{r, 1-r, r'-r, r''-r\}$, then, for all $\textbf{z}'\in B_{R}(\textbf{z})$, the contact graph $\Gamma_{\textbf{z}'}$ embeds as a subgraph of $\Gamma_{\textbf{z}}$ upon forgetting the metric via the map that sends $z'_{i}$ to $z_{i}$.
\end{prop}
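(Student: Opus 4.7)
The plan is to exploit the $1$-Lipschitz nature of the Chebyshev distance together with the gap conditions built into the definitions of $r'$ and $r''$. I would first observe that whenever $d(\textbf{z}, \textbf{z}') \le R$, two applications of the triangle inequality give $|d_{\infty}(z_i, z_j) - d_{\infty}(z_i', z_j')| \le 2R$ and $|d_{\infty}(z_i, \partial K(n)) - d_{\infty}(z_i', \partial K(n))| \le R$. Taking minima termwise then yields $|\theta_m(\textbf{z}) - \theta_m(\textbf{z}')| \le R$, so in particular $\theta_m(\textbf{z}') \le r + R$. The hypothesis $R \le r/16$ also ensures that $\textbf{z}'$ lies in $F_m(K(n))$, so $\Gamma_{\textbf{z}'}$ is defined.

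To establish the embedding I would prove the contrapositive separately for the two edge types. Suppose $(i,j)$ is not an internal edge of $\Gamma_{\textbf{z}}$. Because $r = \theta_m(\textbf{z})$ forces $d_{\infty}(z_i, z_j) \ge 2r$ for all pairs, the absence of the edge means this inequality is strict, and the definition of $r'$ then gives $d_{\infty}(z_i, z_j) \ge 2r'$. Hence $d_{\infty}(z_i', z_j') \ge 2r' - 2R$, which exceeds $2(r + R) \ge 2\theta_m(\textbf{z}')$ as soon as $R < (r' - r)/2$; the factor $1/16$ in the definition of $R$ delivers this with plenty of slack, so $(i,j)$ is not an internal edge of $\Gamma_{\textbf{z}'}$.

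The external-edge case is entirely analogous: if $z_i$ has no external edge in $\Gamma_{\textbf{z}}$, then $d_{\infty}(z_i, \partial K(n)) \ge r''$, so $d_{\infty}(z_i', \partial K(n)) \ge r'' - R > r + R \ge \theta_m(\textbf{z}')$, ruling out an external edge at $z_i'$. Combining the two computations yields the claimed inclusion $\Gamma_{\textbf{z}'} \hookrightarrow \Gamma_{\textbf{z}}$: internal vertices go via $z_i' \mapsto z_i$ as specified, and each external edge of $\Gamma_{\textbf{z}'}$ incident to $z_i'$ is matched by an external edge of $\Gamma_{\textbf{z}}$ incident to $z_i$, so the external vertices can be mapped compatibly.

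The main (relatively minor) point to watch is that external vertices are points of $\partial K(n)$ rather than labeled particles, so the embedding is combinatorial on that side rather than literal; once this is acknowledged, the argument goes through cleanly. The factor $1/16$ in the definition of $R$ is much more generous than my Lipschitz bound actually requires, so all strict inequalities above hold uniformly across $B_R(\textbf{z})$ without any delicate fine-tuning.
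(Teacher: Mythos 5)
Your proposal is correct and takes essentially the same route as the paper: bound $\theta_m(\textbf{z}')$ from above via the triangle inequality, then show that pairs (resp.\ points) not contributing to $\theta_m(\textbf{z})$ stay far enough apart (resp.\ far enough from $\partial K(n)$) that they cannot contribute to $\theta_m(\textbf{z}')$, so every edge of $\Gamma_{\textbf{z}'}$ corresponds to an edge of $\Gamma_{\textbf{z}}$. Your remark distinguishing the combinatorial matching of external vertices from the literal map $z_i'\mapsto z_i$ on internal vertices is a small clarification the paper leaves implicit, and your Lipschitz phrasing of the first step is a cleaner packaging of the same two triangle-inequality estimates the paper writes out.
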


\begin{proof}
We begin by determining an upper bound for the tautological function at $\textbf{z}'$.
If $\frac{1}{2}d_{\infty}(z_{i}, z_{j})=\theta_{m}(\textbf{z})=r$, then
\begin{align*}
d_{\infty}(z'_{i}, z'_{j})&\le d_{\infty}(z'_{i}, z_{i})+d_{\infty}(z_{i}, z_{j})+d_{\infty}(z_{j}, z'_{j})\\
&\le R+2r+R.
\end{align*}
Similarly, if $d_{\infty}\big(z_{i}, \partial K(n)\big)=\theta_{m}(\textbf{z})=r$, then
\begin{align*}
d_{\infty}\big(z'_{i}, \partial K(n)\big)&\le d_{\infty}(z'_{i}, z_{i})+d_{\infty}\big(z_{i}, \partial K(n)\big)\\
&\le R+r.
\end{align*}
It follows that $\theta_{m}(\textbf{z}')\le r+R$.

It remains to check that if there is no internal edge between $z_{i}$ and $z_{j}$, then there is no internal edge between $z'_{i}$ and $z'_{j}$, and if there is no external edge at $z_{i}$, then there is no external edge at $z'_{i}$.
This is equivalent to showing that if $d_{\infty}(z_{i}, z_{j})$ does not realize $\theta_{m}(\textbf{z})$, then $d_{\infty}(z'_{i}, z'_{j})>2r+2R$, and if $d_{\infty}(z_{i}, \partial K)$ does not realize $\theta_{m}(\textbf{z})$, then $d_{\infty}(z'_{i}, \partial K)>r+R$.
If $d_{\infty}(z_{i}, z_{j})$ does not realize $\theta_{m}(\textbf{z})$, then $d_{\infty}(z_{i}, z_{j})\ge2r'$ by assumption.
Therefore, 
\begin{align*}
d_{\infty}(z'_{i}, z'_{j})&\ge -d_{\infty}(z'_{i}, z_{i})+d_{\infty}(z_{i}, z_{j})-d_{\infty}(z_{j}, z'_{j})\\
&\ge -R+2r'-R\\
&>2r+2R.
\end{align*}
Similarly, if $d_{\infty}\big(z_{i}, \partial K(n)\big)$ does not realize $\theta_{m}(\textbf{z})$, then $d_{\infty}\big(z_{i}, \partial K(n)\big)\ge r''$, so 
\begin{align*}
d_{\infty}\big(z'_{i}, \partial K(n)\big)&\ge -d_{\infty}(z'_{i}, z_{i})+d_{\infty}\big(z_{i}, \partial K(n)\big)\\
&\ge -R+r''\\
&>r+R.
\end{align*}
Thus, $\theta_{m}(\textbf{z}')$ can only be realized by the $\frac{1}{2}d_{\infty}(z'_{i}, z'_{j})$ such that $\frac{1}{2}d_{\infty}(z_{i}, z_{j})$ realizes $\theta_{m}(\textbf{z})$ and by the $d_{\infty}(z'_{i}, \partial K)$ such that $d_{\infty}(z_{i}, \partial K)$ realizes $\theta_{m}(\textbf{z})$.
Since the edges of $\Gamma_{\textbf{z}'}$ correspond to the $\frac{1}{2}d_{\infty}(z'_{i}, z'_{j})$ and the $d_{\infty}(z'_{i}, \partial K)$ realizing $\theta_{m}(\textbf{z}')$, we see that for all $\textbf{z}'\in B_{R}(\textbf{z})$ the map $z'_{i}\mapsto z_{i}$ induces an inclusion of graphs $\Gamma_{\textbf{z}'}\hookrightarrow\Gamma_{\textbf{z}}$ upon forgetting the metric.
\end{proof}

We use Proposition \ref{nearby points have similar contact graphs} to find a neighborhood of $\textbf{z}$ on which there exists a $\theta_{m}$ increasing function.
We begin by considering the case in which the contact graph of $\textbf{z}$ is connected.

\begin{prop}\label{move points away from boundary}
For $m\le n$ and $g\ge 1$, let $\textbf{z}=(z_{1}, \dots, z_{m})$ be a configuration in $K(n)$, where $K(n)$ is a cube complex of the form $K_{g,0}(n)$ or $K_{g,1}(n)$, such that $\theta_{m}(\textbf{z})=r<\frac{1}{2}$ and such that the contact graph $\Gamma_{\textbf{z}}$ is connected.
Additionally, set 
\[
r':=\frac{1}{2}\min_{i, j}\big\{d_{\infty}(z_{i}, z_{j})|d_{\infty}(z_{i}, z_{j})\neq 2r\big\} \indent\text{and}\indent r'':=\min_{i}\Big\{d_{\infty}\big(z_{i}, \partial K(n)\big)|d_{\infty}\big(z_{i}, \partial K(n)\big)\neq r\Big\}.
\]
If $R\le \frac{1}{16}\min\{r, 1-r, r'-r, r''-r\}$, then there is a smooth map $\phi_{R}:B_{R}(\textbf{z})\to F_{m}(K)$ such that $\theta_{m}\big(\phi_{\textbf{z}}(\textbf{z}')\big)>\theta_{m}(\textbf{z}')$ for all $\textbf{z}'\in B_{R}(\textbf{z})$.
\end{prop}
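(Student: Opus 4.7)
The idea is to construct a smooth vector field on $B_R(\textbf{z})$ whose flow strictly increases $\theta_m$, and take $\phi_R$ to be its time-$\varepsilon$ map for small $\varepsilon > 0$. The vector field will be radial expansion from the singular point $p$ (the origin of the lift $\tilde{K}(n)$), which simultaneously spreads pairs of points apart and moves each away from the boundary.

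I would first lift the contact graph $\Gamma_{\textbf{z}}$ to $\tilde{K}(n)$. Every simple cycle in $\Gamma_{\textbf{z}}$ has Chebyshev length at most $m \cdot 2r < m \leq n$, hence is null-homotopic in $K(n)$ by the injectivity radius bound of Proposition \ref{injectivity radius}; therefore a thickening of $\Gamma_{\textbf{z}}$ together with disks filling these contractible cycles is a path-connected contractible open set $U \subset K(n)$ of intrinsic Chebyshev diameter less than $n$. Lemma \ref{section of small set no boundary} (or Lemma \ref{section of small set with boundary} in the boundary case) then supplies a section $s : U \to \tilde{K}(n)$ with coordinates $s(z_i) = (\hat{x}_i, \hat{y}_i) \in \R^2$.

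Set $\vec{v}_{z_i} := (\hat{x}_i, \hat{y}_i)$ at each configuration point, transported via the local isometry $\pi$; this yields a smooth vector field on $K(n) \setminus \{p\}$, vanishing at $p$ (consistent with $T_p K_{g,0}(n) = \{0\}$). I would then verify $\vec{v} \in \text{Cone}_{\textbf{z}}(\theta_m)$ edge by edge using the dot-product characterization preceding the proposition. For an internal edge $(z_i, z_j)$ with Chebyshev distance realized horizontally,
\[
\big\langle (v_{i,x}, v_{j,x}),\, (\hat{x}_i - \hat{x}_j,\, \hat{x}_j - \hat{x}_i) \big\rangle = (\hat{x}_i - \hat{x}_j)^2 > 0,
\]
since $|\hat{x}_i - \hat{x}_j| = 2r > 0$. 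Analogous computations handle vertically realized internal edges and external edges (the boundary of $K_{g,1}(n)$ appears as the central removed square around the origin in $\tilde{K}_{g,1}(n)$, from which radial expansion moves each $z_i$ away).

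To promote this to $B_R(\textbf{z})$, I apply the same lift-and-expand construction at every $\textbf{z}' \in B_R(\textbf{z})$. By Proposition \ref{nearby points have similar contact graphs}, $\Gamma_{\textbf{z}'}$ embeds as a subgraph of $\Gamma_{\textbf{z}}$, so for $R$ small the lift of $U$ perturbs smoothly with $\textbf{z}'$ and every edge constraint at $\textbf{z}'$ inherits the positive dot-product computation; taking the time-$\varepsilon$ flow for $\varepsilon$ sufficiently small, uniformly over the compact closure of $B_R(\textbf{z})$, produces the desired $\phi_R$. The main obstacle is securing a single coherent lift of $\Gamma_{\textbf{z}}$ even in the presence of cycles, which is precisely what the hypotheses $m \leq n$ and $r < 1/2$ deliver: the resulting length bound forces every simple cycle to be null-homotopic, so it lifts to a closed curve in $\tilde{K}(n)$ and the radial expansion is globally well defined. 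A minor secondary point is smoothness of $s$ and of the vector field as $\textbf{z}'$ varies, which follows from the polar-coordinate definition of $s$ away from the slit, with $R$ chosen small enough that all lifts stay in the same sheet.
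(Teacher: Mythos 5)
Your approach is essentially the paper's: lift via the polar coordinates of Lemmas~\ref{section of small set no boundary}/\ref{section of small set with boundary}, then radially expand from the origin of $\tilde K(n)$. The paper writes the map down directly as $\hat\phi_R(\hat z) = t_R\hat z$ with $t_R = \tfrac{L_R+R}{L_R}$ (so a single lift is fixed once on the $2R$-neighborhood of $\Gamma_{\mathbf{z}}$, which contains $\bigcup_i B_R(z'_i)$ for every $\mathbf{z}'\in B_R(\mathbf{z})$), and then shows $\phi_R$ scales each active particular metric function by $t_R$. Your vector field $\vec v(\hat z)=\hat z$ generates exactly this one-parameter scaling, so the constructions agree; your ``apply the lift-and-expand construction at every $\mathbf{z}'$'' is unnecessary wrinkle --- the paper uses one fixed lift for the whole ball.

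Where your write-up falls short is the cone verification. The computation you carry out, $\langle(v_{i,x},v_{j,x}),(\hat x_i-\hat x_j,\hat x_j-\hat x_i)\rangle=(\hat x_i-\hat x_j)^2>0$, only covers the case where the projection $\tilde K(n)\to\mathbb{R}^2$ is a horizontal isometry on the lifted edge. When the realizing path passes through $p$ the projection is not an isometry; the lifted horizontal separation is $|\hat x_i|+|\hat x_j|$ rather than $|\hat x_i-\hat x_j|$, the paper's cone criterion for this case is of a genuinely different form, and your ``analogous computations handle it'' is not true --- in fact if you try to plug the radial field into the criterion as printed you do not land comfortably above the threshold (e.g.\ with $z_j$ near $p$). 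The paper avoids this delicacy entirely by never invoking cone membership for $\phi_R$: it verifies directly that $\hat\phi_R$ multiplies each realizing distance by $t_R>1$ (using that the scaling is an isometry on $S(n)$ and that the projection $\tilde K(n)\to K(n)$ is an isometry on small balls). Your proof would be cleaner and correct if you replaced the cone check with that direct scaling argument. Separately, in the $K_{g,1}(n)$ case you need the extra observation --- which the paper states explicitly --- that the section $s$ can be chosen so its image stays Chebyshev distance at least $r''$ from the \emph{outer} boundary of $\tilde K_{g,1}(n)$; otherwise radial expansion could push a point closer to the part of $\partial K_{g,1}(n)$ that lifts to the outer square, and the claim $d_\infty(\phi_R(z_i'),\partial K(n))\ge t_R\, d_\infty(z_i',\partial K(n))$ would not follow. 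You should also dispose of the degenerate $m=1$, $K(n)=K_{g,0}(n)$ case (where $\theta_m$ is undefined and one just takes $\phi_R=\mathrm{id}$).
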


\begin{proof}
Note that if $m=1$ and $K(n)=K_{g,0}(n)$, then $\theta_{m}$ is not defined.
In this case we set $\phi_{R}$ to be the identity map.
Otherwise, we proceed as follows.

Since $m\le n$ and $r<\frac{1}{2}$, it follows that the longest non-backtracking path in $\Gamma_{\textbf{z}}$ has length at most $2mr<n$.
It follows from Proposition \ref{injectivity radius} that if $K(n)=K_{g,0}(n)$, the contact graph $\Gamma_{\textbf{z}}$ is contractible in $K(n)$; if $K(n)=K_{g,1}(n)$, then $\Gamma_{\textbf{z}}$ is homotopic into the boundary.
Moreover, by our constraints on $R$, the $2R$-neighborhood of $\Gamma_{\textbf{z}}$ is either contractible or homotopic into the boundary, and it has diameter at most $2mr+4R<n$.
It follows that the $2R$-neighborhood of $\Gamma_{\textbf{z}}$ satisfies the conditions of Lemma \ref{section of small set no boundary} if $K(n)=K_{g,0}(n)$ and Lemma \ref{section of small set with boundary} if $K(n)=K_{g,1}(n)$, so there exists a section of $\pi:\tilde{K}(n)\to K(n)$ on this neighborhood.
Moreover, if $K(n)=K_{g,1}(n)$, we can choose our section to be such that its image is Chebyshev distance at least $r''$ from the outer boundary of $\partial\tilde{K}(n)$, that is the boundary component further from the origin. 
Each of these sections restricts to a section of $\pi$ on $\bigcup_{i=1}^{m} B_{R}(z_{i})$, and the image of such a section is $\bigcup_{i=1}^{m} B_{R}(\tilde{z}_{i})$, where $\tilde{z}_{i}$ is the image of $z_{i}$.

By the definition of the square $S(n)$, there is a projection map from $\tilde{K}(n)$ to $\R^{2}$, such that the image of $\bigcup_{i=1}^{m} B_{R}(\tilde{z}_{i})$ is $\bigcup_{i=1}^{m} B_{R}(\hat{z}_{i})$, where $\hat{z}_{i}$ is the projection of $\tilde{z}_{i}$ to $\R^{2}$.
Setting $L_{R}=\sup_{\hat{z}\in \bigcup_{i=1}^{m} B_{R}(\hat{z}_{i})} \Big\{d_{2}\big(\hat{z}, (0,0)\big)\Big\}$ and $t_{R}=\frac{L_{R}+R}{L_{R}}$, we define the map $\hat{\phi}_{R}:\bigcup_{i=1}^{m} B_{R}(\hat{z}_{i})\to S(n)\subset \R^{2}$ via $\hat{\phi}_{R}(\hat{z})=t_{R}\hat{z}$. 
This map lifts to a map $\tilde{\phi}_{R}:\bigcup_{i=1}^{m} B_{R}(\tilde{z}_{i})\to \tilde{K}(n)$, which descends to a map $\phi_{R}:\bigcup_{i=1}^{m} B_{R}(z_{i})\to K(n)$, which in turn induces a map, which we also call $\phi_{R}$, from $B_{R}(\textbf{z})$ to $F_{m}\big(K(n)\big)$. 
Note that these maps move points in a configuration a distance at most $R$.
Since $\phi_{R}$ is affine it is smooth; moreover, we claim that it increases $\theta_{m}$ on $B_{R}(\textbf{z})$.

To see this, note that if $d_{\infty}(z'_{i}, z'_{j})$ realizes $\theta_{m}(\textbf{z}')$ for $\textbf{z}'\in B_{R}(\textbf{z})$, then $d_{\infty}\big(\phi_{R}(z'_{i}), \phi_{R}(z'_{j})\big)=t_{R}d_{\infty}(z'_{i}, z'_{j})$.
This follows from the fact that the map $\hat{\phi}_{R}$ scales distances in $S(n)$ by $t_{R}$, and the projection map from $\tilde{K}(n)$ to $K(n)$ is an isometry on the open $l^{\infty}$-ball of radius $2$ around each point.
Similar arguments show that if $d_{\infty}\big(z'_{i}, \partial K(n)\big)$ realizes $\theta_{m}(\textbf{z}')$, then $d_{\infty}\big(\phi_{R}(z'_{i}), \partial K(n)\big)\ge t_{R}d_{\infty}(z'_{i}, \partial K(n))$, with equality if and only if $m=1$.
Since $\phi_{R}$ moves points a Chebyshev distance at most $R$ and  $\theta_{m}(\textbf{z}')\le r+R$ for all $\textbf{z}'\in B_{R}(\textbf{z})$, Proposition \ref{nearby points have similar contact graphs} proves that $\theta_{m}\big(\phi_{R}(\textbf{z}')\big)\le r+2R$.
Therefore, it suffices to check that pairs of points that start far apart, stay far apart and that points that start far from the boundary, stay from the boundary.

The proof of Proposition \ref{nearby points have similar contact graphs} shows that if $\frac{1}{2}d_{\infty}(z'_{i}, z'_{j})$ does not realize $\theta_{m}(\textbf{z}')$, then $\frac{1}{2}d_{\infty}(z'_{i}, z'_{j})\ge r'-R$.
Since $\phi_{R}$ moves points a distance at most $R$, it follows that
\begin{align*}
d_{\infty}\big(\phi_{R}(z'_{i}), \phi_{R}(z'_{j})\big)&\ge -d_{\infty}\big(\phi_{R}(z'_{i}), z'_{i}\big)+d_{\infty}(z'_{i}, z'_{j})-d_{\infty}\big(z'_{j}, \phi_{R}(z'_{j})\big)\\
&\ge -R+2r'-2R-R\\
&>2r+4R.
\end{align*}
Similarly, the proof of Proposition \ref{nearby points have similar contact graphs} also shows that if $d_{\infty}\big(z'_{i}, \partial K_{g,1}(n)\big)$ does not realize $\theta_{m}(\textbf{z}')$, then $d_{\infty}\big(z'_{i}, \partial K_{g,1}(n)\big)\ge r''-R$.
Therefore,
\begin{align*}
d_{\infty}\big(\phi_{R}(z'_{i}), \partial K_{g,1}(n)\big)&\ge-d_{\infty}\big(\phi_{R}(z'_{i}), z'_{i}\big)+d_{\infty}\big(z'_{i}, \partial K_{g,1}(n)\big)\\
&\ge -R+r''-R\\
&>r+2R,
\end{align*}
proving that $\theta_{m}\big(\phi_{R}(\textbf{z})\big)$ can only be realized by the $\frac{1}{2}d_{\infty}\big(\phi_{R}(z'_{i}), \phi_{R}(z'_{j})\big)$ such that $\frac{1}{2}d_{\infty}(z'_{i}, z'_{j})$ realizes $\theta_{m}(\textbf{z}')$ and the $d_{\infty}\big(\phi_{R}(z'_{i}), \partial K_{g,1}(n)\big)$ such that $d_{\infty}\big(z'_{i}, \partial K_{g,1}(n)\big)$ realizes $\theta_{m}(\textbf{z}')$.
Moreover, as $\theta_{m}(\textbf{z}')\le r+R<\frac{1}{2}$, the map $\phi_{R}$ must increase these these particular metric functions by a factor of least $t_{R}$ for all $\textbf{z}'\in B_{R}(\textbf{z})$.
Therefore, $\phi_{R}$ is a smooth function on $B_{R}(\textbf{z})$ that increases $\theta_{m}$ for all $\textbf{z}'\in B_{R}(\textbf{z})$.
\end{proof}

The map $\phi_{R}$ defined in Proposition \ref{move points away from boundary} yields a vector in $\text{Cone}_{\textbf{z}'}(\theta_{m})$ for all $\textbf{z}'\in B_{R}(\textbf{z})$.
Since $\phi_{R}$ moves points in a configuration only a small distance, we see that at $\textbf{z}'\in B_{R}(\textbf{z})$ the map $\phi_{R}$ corresponds to the vector $\vec{\phi}_{R}(\textbf{z}'):=\hat{\phi}_{R}(\hat{\textbf{z}}')-\hat{\textbf{z}}'$ in $T_{\hat{\textbf{z}}'}\R^{2m}$, where we use the additive structure of $\R^{2}$ in which $S(n)$ lies.
Moreover, this can naturally be thought of as vector in $T_{\textbf{z}'}F_{m}\big(K(n)\big)$.
As the map $\phi_{R}$ is smooth, we get a smooth vector field on $B_{\textbf{z}}(R)$.

If $z'_{i}$ and $z'_{j}$ are internal vertices in $\Gamma_{\textbf{z}'}$ connected by an internal edge, i.e., $\frac{1}{2}d_{\infty}(z'_{i}, z'_{j})=\theta_{m}(\textbf{z}')$ such that $d_{\infty}(z'_{i}, z'_{j})$ is realized by the horizontal, resp. vertical, distance between $z'_{i}$ and $z'_{j}$ and the projection map $\tilde{K}(n)\to \R^{2}$ is a horizontal, resp. vertical, isometry of the edge between $z'_{i}$ and $z'_{j}$, then we have that the dot product  $\big\langle (\vec{\phi}_{R,\textbf{x}}(\textbf{z}')_{i}, \vec{\phi}_{R,\textbf{x}}(\textbf{z}')_{j}), (\hat{x}'_{i}-\hat{x}'_{j}, \hat{x}'_{j}-\hat{x}'_{i}) \big\rangle$ is greater than $0$, resp. $\big\langle (\vec{\phi}_{R,\textbf{y}}(\textbf{z}')_{i},\vec{\phi}_{R,\textbf{y}}(\textbf{z}')_{i}),(\hat{y}'_{i}-\hat{y}'_{j}, \hat{y}'_{j}-\hat{y}'_{i}) \big\rangle>0$.
If the projection is not a horizontal, resp. vertical, isometry, then $\big\langle (\vec{\phi}_{R,\textbf{x}}(\textbf{z}')_{i}, \vec{\phi}_{R,\textbf{x}}(\textbf{z}')_{j}), (\hat{x}'_{i}, \hat{x}'_{j}) \big\rangle>2\theta_{m}(\textbf{z}')$, resp. $\big\langle (\vec{\phi}_{R,\textbf{y}}(\textbf{z}')_{i},\vec{\phi}_{R,\textbf{y}}(\textbf{z}')_{i}),(\hat{y}'_{i}, \hat{y}'_{j}) \big\rangle>2\theta_{m}(\textbf{z}')$.
Similarly, if $z'_{i}$ is an internal vertex of $\Gamma_{\textbf{z}'}$ connected by an external edge to the external vertex $w=(u,v)\in\partial K_{g,1}(n) $, that is, $d_{\infty}\big(z'_{i},\partial K_{g,1}(n)\big)=d_{\infty}(z'_{i},w)=\theta_{m}(\textbf{z}')$, such that this distance is realized by the horizontal, resp. vertical, distance between $z'_{i}$ and $w$, then
$\big\langle (\vec{\phi}_{R,\textbf{x}}(\textbf{z}')_{i}, (\hat{x}_{i}-\hat{u}) \big\rangle>0$, resp. $\big\langle (\vec{\phi}_{R,\textbf{y}}(\textbf{z}')_{i}, (\hat{y}_{i}-\hat{v}) \big\rangle>0$.

\begin{remark}\label{no need to worry about tangent space}
There is no need to be concerned about this vector field being well-defined in the case $K(n)=K_{g,0}(n)$, even though $\cup_{i=1}^{m}B_{R}(z_{i})$ might contain $p$, the point of non-zero curvature, making the notion of $T B_{R}(\textbf{z})$ a little fuzzy as a result---recall that if $p$ is one of the points in the configuration $\textbf{z}'\in B_{R}(\textbf{z})$, then we set $T_{\textbf{z}'} B_{R}(\textbf{z})=\R^{2m-2}\subset \R^{2m}$ by setting the coordinates corresponding to the point at $p$ of any vector equal to $0$.
To see this, note that $\phi_{R}$ moves points starting close to $p$ further away from $p$ by a factor of $t_{R}$, i.e., if $z'\in \cup_{i=1}^{m}B_{R}(z_{i})$ is such that $d_{\infty}(z', p)\le r+R$, then $d_{\infty}\big(\phi_{R}(z'), p\big)=t_{R}d_{\infty}(z', p)\ge d_{\infty}(z', p)$, and keeps $p$ still, i.e., if $p\in\cup_{i=1}^{m}B_{R}(z_{i})$, then $\phi_{R}(p)=p$.
It follows that as $z'_{i}$ goes to $p$, then corresponding components of $\vec{\phi}_{R}(\textbf{z}')$ go to $0$. 
Thus, despite the fact that $T B_{R}(\textbf{z})$ might not be well-defined the classical sense, our workaround provides a reasonable notion in which this vector field is smooth.
\end{remark}

Next, we show that the map $\phi_{R}$ defined in Proposition \ref{move points away from boundary} can be used to define a map that increases $\theta_{m}$ on a small neighborhood of any $\textbf{z}\in F_{m}\big(K(n)\big)$ such that $\theta_{m}(\textbf{z})<\frac{1}{2}$.

\begin{lem}\label{increase tautological function}
For $m\le n$ and $g\ge 1$, let $K(n)$ be a cube complex of the form $K_{g,0}(n)$ or $K_{g,1}(n)$, and let $\textbf{z}\in F_{m}\big(K(n)\big)$ be a configuration such that $\theta_{m}(\textbf{z})=r<\frac{1}{2}$.
There is a neighborhood $U_{\textbf{z}}$ of $\textbf{z}$ on which there exists a smooth function $\phi_{\textbf{z}}:U_{\textbf{z}}\to F_{m}\big(K(n)\big)$ such that $\theta_{m}\big(\phi_{\textbf{z}}(\textbf{z}')\big)>\theta_{m}(\textbf{z}')$ for all $\textbf{z}'\in U_{\textbf{z}}$.
\end{lem}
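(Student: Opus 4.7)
The plan is to reduce to Proposition \ref{move points away from boundary} by decomposing the contact graph into its connected components and applying that proposition independently to each piece. Write $\Gamma_{\textbf{z}}=C_{1}\sqcup\cdots\sqcup C_{l}$ as a disjoint union of connected components (some possibly isolated vertices). The internal vertices of the $C_{k}$ partition the coordinates of $\textbf{z}$ into subconfigurations $\textbf{z}^{(1)},\dots,\textbf{z}^{(l)}$, and by construction each $C_{k}$ is (or contains, in the isolated-vertex case) the contact graph of $\textbf{z}^{(k)}$ viewed as a point of $F_{|C_{k}|}(K(n))$. Since each such contact graph is connected, Proposition \ref{move points away from boundary} can be invoked on each component individually.

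First I would choose $R$ small enough that the following hold simultaneously: $R$ satisfies the hypotheses of Proposition \ref{nearby points have similar contact graphs} for the full configuration $\textbf{z}$; $R$ satisfies the hypotheses of Proposition \ref{move points away from boundary} when applied to each nontrivial subconfiguration $\textbf{z}^{(k)}$; and any two points lying in different subconfigurations remain at Chebyshev distance strictly greater than $2r+4R$, which is possible because such a pair realizes Chebyshev distance strictly greater than $2r$ in $\textbf{z}$ by the definition of $\Gamma_{\textbf{z}}$. On each nontrivial component Proposition \ref{move points away from boundary} produces a smooth map $\phi_{R,k}\colon B_{R}(\textbf{z}^{(k)})\to F_{|C_{k}|}(K(n))$ strictly increasing $\theta_{|C_{k}|}$; on each isolated-vertex component I take the identity. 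Patching these together by simultaneously applying $\phi_{R,k}$ to the $k$th subconfiguration defines a smooth map $\phi_{\textbf{z}}$ on $U_{\textbf{z}}:=B_{R}(\textbf{z})$.

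To verify $\theta_{m}\circ\phi_{\textbf{z}}>\theta_{m}$ on $U_{\textbf{z}}$ I would argue as follows. By Proposition \ref{nearby points have similar contact graphs}, for any $\textbf{z}'\in U_{\textbf{z}}$ every edge of $\Gamma_{\textbf{z}'}$ embeds into $\Gamma_{\textbf{z}}$ and hence lies in a single component $C_{k}$; thus $\theta_{m}(\textbf{z}')=\theta_{|C_{k}|}((\textbf{z}')^{(k)})$ for some $k$. Since $\phi_{R,k}$ strictly increases this tautological value, the particular metric functions that realize $\theta_{m}(\textbf{z}')$ all grow strictly after applying $\phi_{\textbf{z}}$. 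The pair-distances between points in different subconfigurations remain above $2r+2R$ by the choice of $R$ and the fact that $\phi_{R,k}$ moves each point by at most $R$, and similarly the boundary distances corresponding to non-edges of $\Gamma_{\textbf{z}}$ stay above $r+R$ by the constraint involving $r''$ in Proposition \ref{move points away from boundary}. Hence no new minimum can be realized across components, and $\theta_{m}(\phi_{\textbf{z}}(\textbf{z}'))>\theta_{m}(\textbf{z}')$ as required.

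The main obstacle will be bookkeeping the various constants $r$, $r'$, $r''$, and the inter-component separations carefully enough to guarantee that no new critical edges are created between distinct components after applying $\phi_{\textbf{z}}$; this is a routine but slightly delicate extension of the estimates already carried out in the proofs of Propositions \ref{nearby points have similar contact graphs} and \ref{move points away from boundary}.
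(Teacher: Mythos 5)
Your proposal is correct and follows essentially the same strategy as the paper: partition the coordinates of $\textbf{z}$ according to the connected components of $\Gamma_{\textbf{z}}$, apply Proposition~\ref{move points away from boundary} componentwise, shrink $R$ so that Proposition~\ref{nearby points have similar contact graphs} and the componentwise hypotheses all hold simultaneously, and then check via the triangle inequality that cross-component distances remain above the post-flow value of $\theta_{m}$. The only cosmetic difference is your choice of the identity on isolated-vertex components; the paper instead applies Proposition~\ref{move points away from boundary} (which is the identity in the $K_{g,0}(n)$ case and a mild boundary-avoiding dilation in the $K_{g,1}(n)$ case), but since an isolated vertex is by definition strictly farther than $r$ from the boundary, either choice works once $R$ is taken small relative to $r''-r$, so your variant is fine.
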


\begin{proof}
Given $\textbf{z}=(z_{1}, \dots, z_{m})\in F_{m}\big(K(n)\big)$, we partition $\{z_{1}, \dots, z_{m}\}$ into sets such that each set $\{z_{i}\}_{i\in I}$ corresponds to a distinct connected component of the contact graph $\Gamma_{\textbf{z}}$.
Ordering the set $\{z_{i}\}_{i\in I}$ yields a point in $\textbf{z}_{I}\in F_{|I|}\big(K(n)\big)$.
Proposition \ref{move points away from boundary} proves that for each $\textbf{z}_{I}$ one can construct a map $\phi_{R_{I}}:B_{R_{I}}(\textbf{z}_{I})\to F_{I}\big(K(n)\big)$ such that $\theta_{|I|}\big(\phi_{\textbf{z}_{I}}(\textbf{z}'_{I})\big)>\theta_{|I|}(\textbf{z}'_{I})$ for all $\textbf{z}'\in B_{R_{I}}(\textbf{z}_{I})$.
Here $R_{I}=\frac{1}{16}\min\{r, 1-r, r'_{I}-r, r''_{I}-r\}$, where $r'_{I}=\frac{1}{2}\min_{i\neq j\in I}\big\{d_{\infty}(z_{i}, z_{j})|d_{\infty}(z_{i}, z_{j})\neq 2r\big\}$, and $r''_{I}=\min_{i\in I}\Big\{d_{\infty}\big(z_{i}, \partial K(n)\big)|d_{\infty}\big(z_{i}, \partial K(n)\big)\neq r\Big\}$ when defined.
We use these $\phi_{R_{I}}$ to construct a map $\phi_{\textbf{z}}$ that increases $\theta_{m}$ on a neighborhood of $\textbf{z}$.

To do this, let $r'=\frac{1}{2}\min_{i\neq j}\big\{d_{\infty}(z'_{i}, z'_{j})|d_{\infty}(z'_{i}, z'_{j})\neq 2r\big\}$, and set $R=\min\Big\{\min_{I}\{R_{I}\}, \frac{1}{16}(r'-r)\Big\}$, and set $U_{z}=B_{R}(\textbf{z})$.
We claim that the map $\phi_{\textbf{z}}:U_{z}\to B_{2R}(\textbf{z})\subset F_{m}\big(K(n)\big)$ that restricts to $\phi_{R_{I}}$ on $\textbf{z}'_{I}$, increases $\theta_{m}$ for all $\textbf{z}'\in U_{\textbf{z}}$.
Since $B_{R}(\textbf{z}_{I})\subseteq B_{R_{I}}(\textbf{z}_{I})$ for all $I$, Proposition \ref{move points away from boundary} proves that $\phi_{\textbf{z}}$ is well-defined and smooth.
Moreover, it follows from Proposition and \ref{move points away from boundary} that if we also write $\phi_{\textbf{z}}(z'_{i})$ for the restriction of $\phi_{\textbf{z}}$ to a single point $z'_{i}$ in the configuration $\textbf{z}'\in U_{\textbf{z}}$, then $\phi_{\textbf{z}}$ increases $\frac{1}{2}d_{\infty}(z'_{i}, z'_{j})$ for all pairs $z'_{i}$ and $z'_{j}$ such that $z_{i}$ and $z_{j}$ are in the same component of $\Gamma_{\textbf{z}}$.
Similarly, if $K(n)=K_{g,1}(n)$, then Proposition \ref{move points away from boundary} shows that $\phi_{\textbf{z}}$ increases $d_{\infty}\big(z'_{i}, \partial K(n)\big)$ for all points $z'_{i}$ in the configuration $\textbf{z}'\in U_{\textbf{z}}$.
It remains to check that if $z_{i}$ and $z_{j}$ are not in the same component of $\Gamma_{\textbf{z}}$, then $\frac{1}{2}d_{\infty}\big(\phi_{\textbf{z}}(z'_{i}), \phi_{\textbf{z}}(z'_{j})\big)$ does not realize $\theta_{m}\big(\phi_{\textbf{z}}(\textbf{z}')\big)$ for all $\textbf{z}'\in U_{\textbf{z}}$.

Proposition \ref{move points away from boundary} proves that $\theta_{m}\big(\phi_{\textbf{z}}(\textbf{z}')\big)\le r+2R$ for all $\textbf{z}'\in U_{\textbf{z}}$.
Thus, it suffices to check that if $z_{i}$ and $z_{j}$ are not in the same component of $\Gamma_{\textbf{z}}$, then $\frac{1}{2}d_{\infty}\big(\phi_{\textbf{z}}(z'_{i}), \phi_{\textbf{z}}(z'_{j})\big)>r+2R$.
Note that $d_{\infty}\big(z'_{i}, \phi_{\textbf{z}}(z'_{i})\big)\le R$ for every point $z'_{i}$ in the configuration $\textbf{z}'$, since the $\phi_{R_{I}}$s move points a distance at most $R$.
Therefore, if $z_{i}$ and $z_{j}$ are not in the same component of $\Gamma_{\textbf{z}}$, then 
\begin{align*}
\frac{1}{2}d_{\infty}\big(\phi_{z}(z'_{i}),\phi_{z}(z'_{j})\big)&\ge -\frac{1}{2}d_{\infty}\big(\phi_{z}(z'_{i}),z'_{i}\big)+\frac{1}{2}d_{\infty}(z'_{i},z'_{j})-\frac{1}{2}d_{\infty}\big(z'_{j},\phi_{z}(z'_{j})\big)\\
&\ge -\frac{1}{2}R+r'-\frac{1}{2}R\\
&=r'-R\\
&>r+2R,
\end{align*}
where $z'_{i}$ and $ z'_{j}$ points of a configuration $\textbf{z}'\in U_{\textbf{z}}$.
Therefore, $\phi_{\textbf{z}}$ is a smooth function on $U_{\textbf{z}}=B_{R}(\textbf{z})$ that increases $\theta_{m}$ for all points in $U_{\textbf{z}}$ 
\end{proof}

Since the $\phi_{R_{I}}$s lead to a smooth vector field on the $B_{R}(\textbf{z}_{I})$s, the function $\phi_{\textbf{z}}$ leads to a smooth vector field on $U_{\textbf{z}}$.
Namely, at each point $\textbf{z}'\in U_{\textbf{z}}$ the map $\phi_{\textbf{z}}$ yields the vector $\vec{\phi}(\textbf{z}):=\hat{\phi}_{\textbf{z}}(\hat{\textbf{z}}')-\hat{\textbf{z}}'$, where $\hat{\textbf{z}'}$ is the point in $\big(S(n)\big)^{m}$ corresponding to $\textbf{z}'$ and $\hat{\phi}_{\textbf{z}}$ is the map inducing $\phi_{\textbf{z}}$ in $T_{\textbf{z}'}F_{m}\big(K(n)\big)$.
These vectors lie in $\text{Cone}_{\textbf{z}'}(\theta_{m})$, i.e., their dot products with the vectors coming from the contact graph $\Gamma_{\textbf{z}}$ are positive; moreover, for the reasons stated in Remark \ref{no need to worry about tangent space} we do not need to worry about our definition of the tangent space of $F_{m}\big(K_{g,0}(n)\big)$ as if a point in a configuration tends to $p$, the corresponding components of the vector tends to $0$.

As a result of Lemma \ref{increase tautological function},  we know that if $\theta_{m}(\textbf{z})<\frac{1}{2}$, then $\textbf{z}$ is a regular point, which proves the following corollary.

\begin{cor}\label{small regular values}
Let $K$ be a cube complex of the form $K_{g,0}(n)$ or $K_{g,1}(n)$.
Then, for $m\le n$, all $r\in \big(0,\frac{1}{2}\big)$ are regular values of $\theta_{m}:K^{m}\to \R$.
\end{cor}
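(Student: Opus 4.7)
The plan is to derive the corollary almost immediately from Lemma \ref{increase tautological function} together with the explicit identification of the open cone of $\theta_{m}$ in terms of vector fields that appears in the discussion immediately following that lemma. Fix $r\in\big(0,\tfrac{1}{2}\big)$ and any $\textbf{z}\in\theta_{m}^{-1}(r)$. Since $r>0$, the configuration $\textbf{z}$ lies neither on the fat diagonal $\Delta$ nor on $\partial\big((K(n))^{m}\big)$, so to conclude that $\textbf{z}$ is a regular point of $\theta_{m}$ it suffices to exhibit a neighborhood of $\textbf{z}$ on which $\text{Cone}_{\textbf{z}'}(\theta_{m})$ is nonempty for every $\textbf{z}'$.

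First, I would invoke Lemma \ref{increase tautological function} to obtain a neighborhood $U_{\textbf{z}}$ of $\textbf{z}$ and a smooth map $\phi_{\textbf{z}}\colon U_{\textbf{z}}\to F_{m}\big(K(n)\big)$ with $\theta_{m}\big(\phi_{\textbf{z}}(\textbf{z}')\big)>\theta_{m}(\textbf{z}')$ for all $\textbf{z}'\in U_{\textbf{z}}$. Next, following the discussion after Lemma \ref{increase tautological function}, I would interpret $\phi_{\textbf{z}}$ as a smooth vector field on $U_{\textbf{z}}$ by setting
\[
\vec{\phi}_{\textbf{z}}(\textbf{z}'):=\hat{\phi}_{\textbf{z}}(\hat{\textbf{z}}')-\hat{\textbf{z}}'\in T_{\textbf{z}'}F_{m}\big(K(n)\big),
\]
where $\hat{\textbf{z}}'$ and $\hat{\phi}_{\textbf{z}}$ are obtained via the polar parametrizations of Lemmas \ref{section of small set no boundary} and \ref{section of small set with boundary}, and where Remark \ref{no need to worry about tangent space} justifies smoothness even when a coordinate of $\textbf{z}'$ approaches $p$.

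Then I would verify that $\vec{\phi}_{\textbf{z}}(\textbf{z}')\in\text{Cone}_{\textbf{z}'}(\theta_{m})$ for every $\textbf{z}'\in U_{\textbf{z}}$. This is precisely the dot-product condition spelled out after Lemma \ref{increase tautological function}: each edge of the contact graph $\Gamma_{\textbf{z}'}$ produces a vector against which $\vec{\phi}_{\textbf{z}}(\textbf{z}')$ pairs positively, because $\phi_{\textbf{z}}$ strictly increases every particular metric function realizing $\theta_{m}(\textbf{z}')$. Since $\text{Cone}_{\textbf{z}'}(\theta_{m})$ is the intersection of the open cones of those realizing particular metric functions, the vector $\vec{\phi}_{\textbf{z}}(\textbf{z}')$ lies in this intersection, and in particular this intersection is nonempty.

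Thus every $\textbf{z}\in\theta_{m}^{-1}(r)$ admits a neighborhood on which the open cone of $\theta_{m}$ is nowhere empty, so every such $\textbf{z}$ is a regular point and $r$ is a regular value. The only step with any real content has already been packaged into Lemma \ref{increase tautological function} and the subsequent vector-field interpretation; the main subtlety one must be careful about is simply that the neighborhoods on which the cone condition holds are uniform enough to cover the whole level set pointwise, which is automatic since regularity is defined pointwise rather than globally. In particular, no new estimate is required, and the corollary follows by assembling the cited results.
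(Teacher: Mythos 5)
Your proof is correct and follows the paper's approach exactly: the paper introduces the corollary with the remark ``As a result of Lemma~\ref{increase tautological function}, we know that if $\theta_{m}(\textbf{z})<\frac{1}{2}$, then $\textbf{z}$ is a regular point, which proves the following corollary,'' which is precisely the argument you unpack (invoke Lemma~\ref{increase tautological function} to get a $\theta_m$-increasing map on a neighborhood, pass to the associated vector field as in the discussion following that lemma, and observe that its membership in $\text{Cone}_{\textbf{z}'}(\theta_m)$ for all $\textbf{z}'$ in the neighborhood is exactly the definition of $\textbf{z}$ being a regular point). The only thing worth flagging is that the passage from ``$\phi_{\textbf{z}}$ increases $\theta_m$'' to ``$\vec{\phi}_{\textbf{z}}(\textbf{z}')$ lies in the cone'' is legitimate because the constructions in Proposition~\ref{move points away from boundary} and Lemma~\ref{increase tautological function} in fact strictly increase \emph{each} realizing particular metric function (by the scaling factor $t_R$), not merely their minimum; you gesture at this but it is worth being explicit since the cone is defined via the individual realizing functions, not via $\theta_m$ itself.
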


Note that to prove Corollary \ref{small regular values} we constructed functions $\phi_{\textbf{z}}$ that increase $\theta_{m}$ on a neighborhood $U_{\textbf{z}}$ of $\textbf{z}$, and that this is equivalent to the existence of a smooth vector field on that neighborhood.
Let $V_{\textbf{z}}$ be a small open ball whose closure $\overline{V_{\textbf{z}}}$ is contained in $U_{\textbf{z}}$, and let $\phi_{t}$ denote the flow along the vector field given by $\phi_{\textbf{z}}$, which we call $\vec{\phi}_{\textbf{z}}$.
It follows that $\theta_{m}$ increases on $V_{\textbf{z}}$ along trajectories of $\phi_{t}$ with non-zero speed.
We use this to prove the following lemma, which is the first step of retracting $F_{m}\big(K(n)\big)$ onto $DF_{m}\big(K(n)\big)$.

\begin{lem}\label{deformation retract for non crit values}
Let $\theta_{m}:\big(K(n)\big)^{m}\to \R_{+}$ be the tautological function.
If $m\le n$ and $0<r_{1}<r_{2}<\frac{1}{2}$, the superlevel space $\theta^{-1}_{m}([r_{2}, \infty))$ is a deformation retract of the superlevel space $\theta^{-1}_{m}([r_{1}, \infty))$.
\end{lem}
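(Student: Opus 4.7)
The plan is to build a globally-defined gradient-like flow on $\theta_{m}^{-1}\big([r_{1},\infty)\big)$ that strictly increases $\theta_{m}$, reparametrize each trajectory so that it halts exactly upon reaching level $r_{2}$, and verify that this produces a strong deformation retract onto $\theta_{m}^{-1}\big([r_{2},\infty)\big)$.

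First I would globalize the local data from Lemma \ref{increase tautological function}. The set $\theta_{m}^{-1}\big([r_{1},r_{2}]\big)$ is closed in the compact space $\big(K(n)\big)^{m}$, hence compact; it lies inside $F_{m}\big(K(n)\big)$ because $\theta_{m}>0$ forces distinctness of coordinates and positive distance to the boundary. Cover it by the open neighborhoods $U_{\textbf{z}}$ supplied by Lemma \ref{increase tautological function}, pass to a locally finite refinement with chosen basepoints $\textbf{z}_{\alpha}$, and let $\{\psi_{\alpha}\}$ be a subordinate smooth partition of unity. Set
\[
\vec{X} \;:=\; \sum_{\alpha}\psi_{\alpha}\,\vec{\phi}_{\textbf{z}_{\alpha}}.
\]
The key technical point is that $\vec{X}(\textbf{z}')\in\text{Cone}_{\textbf{z}'}(\theta_{m})$ at every $\textbf{z}'$ in the domain. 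Although $\text{Cone}_{\textbf{z}'}(\theta_{m})$ may itself fail to be convex (the factors coming from coincident horizontal and vertical tangencies are open sectors of angle $3\pi/2$), each $\vec{\phi}_{\textbf{z}_{\alpha}}(\textbf{z}')$ arises from a dilation and therefore strictly increases \emph{every individual} horizontal and vertical distance that realizes $\theta_{m}(\textbf{z}')$, not merely the maximum. The vectors with this stronger property form the intersection of finitely many open half-spaces, an open convex subcone of $\text{Cone}_{\textbf{z}'}(\theta_{m})$; convex combinations stay inside it, so $\vec{X}(\textbf{z}')$ does as well. Consequently $\theta_{m}$ strictly increases along every non-trivial trajectory of $\vec{X}$.

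Next, since $\vec{X}$ is smooth and $\theta_{m}$ strictly increases along its trajectories inside the compact set $\theta_{m}^{-1}\big([r_{1},r_{2}]\big)$, each $\textbf{z}$ with $\theta_{m}(\textbf{z})\in[r_{1},r_{2})$ admits a unique finite first hitting time $T(\textbf{z})\ge0$ at which the flow crosses the level set $\theta_{m}^{-1}(r_{2})$; set $T(\textbf{z})=0$ on $\theta_{m}^{-1}\big([r_{2},\infty)\big)$. Writing $\Phi_{s}$ for the time-$s$ flow of $\vec{X}$, I then define
\[
H(\textbf{z},t) \;:=\; \Phi_{t\,T(\textbf{z})}(\textbf{z}),
\]
and check that $H(\cdot,0)=\mathrm{id}$, that $\theta_{m}\big(H(\textbf{z},1)\big)\ge r_{2}$, and that $H(\textbf{z},t)=\textbf{z}$ whenever $\theta_{m}(\textbf{z})\ge r_{2}$. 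The main obstacle I anticipate is verifying continuity of $T$ on the boundary level set $\{\theta_{m}=r_{2}\}$ in order to conclude continuity of $H$. This follows from a uniform lower bound on the rate of increase of $\theta_{m}$ along $\vec{X}$ over any compact slab $\theta_{m}^{-1}\big([r_{1},r_{2}-\varepsilon]\big)$, together with the piecewise-affine structure of $\theta_{m}$ supplied by Proposition \ref{tautological function is affine Morse--Bott}, which forces $T(\textbf{z}')\to0$ as $\theta_{m}(\textbf{z}')\to r_{2}^{-}$.
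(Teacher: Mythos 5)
Your proof follows essentially the same route as the paper: cover the compact slab $\theta_{m}^{-1}\big([r_{1},r_{2}]\big)$ by the local neighborhoods from Lemma \ref{increase tautological function}, glue the local cone vector fields via a partition of unity, and flow to the level $r_{2}$. Your explicit remark that each $\vec{\phi}_{\textbf{z}_{\alpha}}$ lands in the convex subcone cut out by strictly increasing \emph{every} active coordinate distance (not merely the Chebyshev maximum) is a welcome clarification of why convex combinations remain in the possibly non-convex $\text{Cone}_{\textbf{z}'}(\theta_{m})$ --- a point the paper's proof addresses only implicitly via the phrase ``all its summands satisfy the dot product requirements'' --- and your reparametrization via the hitting time $T$ likewise spells out what the paper compresses into ``flowing along trajectories yields a deformation retraction.''
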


Our proof of Lemma \ref{deformation retract for non crit values} is nearly identical to that of \cite[Theorem 16]{plachta2021configuration} and \cite[Lemma 3.2]{baryshnikov2014min}; we direct the reader to those works for further details.

\begin{proof}
Let $M$ be the subspace of $F_{m}\big(K(n)\big)$ consisting of configurations with tautological value between $r_{1}$ and $r_{2}$, i.e., $M:=\theta_{m}^{-1}[r_{1}, r_{2}]$.
By Lemma \ref{increase tautological function}, we know that for each $\textbf{z}\in F_{m}\big(K(n)\big)$ such that $\theta_{m}(\textbf{z})<\frac{1}{2}$ there is an open neighborhood $U_{\textbf{z}}$ of $\textbf{z}$ such that there exists a smooth vector field $\vec{\phi}_{\textbf{z}}$ on $U_{\textbf{z}}$ with $\vec{\phi}_{\textbf{z}}(\textbf{z}')\in \text{Cone}_{\textbf{z}'}(\theta_{m})$ for all $\textbf{z}'\in U_{\textbf{z}}$.
By restricting to an open subspace $V_{\textbf{z}}$ of $U_{\textbf{z}}$ such that $\overline{V_{\textbf{z}}}\subset U_{\textbf{z}}$, we know that flowing along this vector field for small $t$ keeps points that started relatively far apart, relatively far apart, while separating nearby points.

Note that $\mathcal{V}=\{V_{\textbf{z}}\}_{\textbf{z}\in M}$ is a cover of $M$.
Since $M$ is compact, we can find a subcover of $M$ by a finite subset $\mathcal{W}$ of $\mathcal{V}$.
Set $W(M)=\bigcup_{V_{\textbf{z}}\in\mathcal{W}}V_{\textbf{z}}$, and note that $W(M)$ is an open neighborhood of $M$ in $F_{m}\big(K(n)\big)$.
Taking a partition of unity subordinate to $\mathcal{W}$ we get a smooth vector field $\vec{\Phi}$ on $W(M)$.
Namely, let $\textbf{z}$ be a point of $M$, and let $V_{\textbf{z}_{\textbf{1}}},\dots, V_{\textbf{z}_{\textbf{k}}}$ be all the sets in $\mathcal{W}$ that contain $\textbf{z}$.
We set $\vec{\Phi}(\textbf{z})=\sum^{k}_{i=1}\lambda_{i}(\textbf{z})\vec{\phi}_{\textbf{z}_{\textbf{i}}}(\textbf{z})$, where the $\lambda_{i}$ are nonnegative functions that add to $1$ at $\textbf{z}$.
By the construction of the functions $\phi_{\textbf{z}_{\textbf{i}}}$ in Lemma \ref{increase tautological function}, the vectors $\vec{\phi}_{\textbf{z}_{\textbf{i}}}$ lie in $\text{Cone}_{\textbf{z}}(\theta_{m})$, so the sum $\vec{\Phi}(\textbf{z})$ must also lie in this cone, as all its summands satisfy the dot product requirements with the vectors arising from the contact graph of $\textbf{z}$.
Informally, since all of the functions yielding the $\vec{\phi}_{\textbf{z}_{\textbf{i}}}$ move nearby points farther apart, the function you get when you average these functions must also separate nearby points.
It follows that $\vec{\Phi}(\textbf{z})$ 
is a nowhere-zero smooth vector field in the neighborhood $W(M)$ of $M$.
Therefore, it generates a unique flow $\Phi_{t}$ on a neighborhood of $M$ in $F_{n}(K)$.
By construction, $\theta_{m}$ increases along the trajectories of the flow with non-zero speed, and $\Phi_{t}$ is transverse to $\theta^{-1}_{m}(r_{1})$ and $\theta^{-1}_{m}(r_{2})$, as each trajectory intersects these subspaces at a unique point.
Flowing along trajectories of $\Phi_{t}$ yields a deformation retraction of $\theta_{m}^{-1}[r_{1}, \infty)$ onto $\theta_{m}^{-1}[r_{2}, \infty)$.
\end{proof}

Next, we extend the deformation retract of Lemma \ref{deformation retract for non crit values} to $r_{2}=\frac{1}{2}$.
This shows that $SF_{m}\big(K(n), d\big)$ is homotopy equivalent to $SF_{m}\big(K(n), 1\big)$ for all $0<d\le 1$.

\begin{lem}\label{retract from unit square to small square}
Let $0<r_{1}<r_{2}<\frac{1}{2}$ be such that there is no critical value of $\theta_{m}$ in the interval $[r_{1}, r_{2})$.
Then, the superlevel space $\theta^{-1}_{m}([r_{2}, \infty))$ is a deformation retract of the superlevel space $\theta^{-1}_{m}([r_{1}, \infty))$.
\end{lem}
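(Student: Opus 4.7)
The plan is to mirror the construction in the proof of Lemma~\ref{deformation retract for non crit values}, upgraded so that the retraction reaches the top level $r_2$ even when $r_2 = \tfrac{1}{2}$ (as the preceding paragraph indicates is the intended application; for $r_2 < \tfrac{1}{2}$ the hypothesis of no critical values is automatic by Corollary~\ref{small regular values}, and the conclusion already follows from Lemma~\ref{deformation retract for non crit values}). The technical obstacle is that the local vector fields $\vec{\phi}_{\textbf{z}}$ supplied by Lemma~\ref{increase tautological function} are defined only at configurations with $\theta_m(\textbf{z}) < \tfrac{1}{2}$, so one cannot directly apply the compactness argument of the previous proof to the set $\theta_m^{-1}[r_1, r_2]$ when $r_2$ sits at the threshold.

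First, I would cover the open (non-compact) set $\theta_m^{-1}\bigl([r_1, r_2)\bigr)$ by the neighborhoods $V_{\textbf{z}}$ constructed in Lemma~\ref{increase tautological function}, extract a locally finite refinement, and glue the local vector fields $\vec{\phi}_{\textbf{z}}$ into a single smooth vector field $\vec{\Phi}$ via a subordinate partition of unity. As in the proof of Lemma~\ref{deformation retract for non crit values}, since each $\vec{\phi}_{\textbf{z}_i}$ satisfies the dot product inequalities cutting out $\text{Cone}_{\textbf{z}}(\theta_m)$ from the vectors arising from the contact graph $\Gamma_{\textbf{z}}$, any convex combination does too, so $\theta_m$ strictly increases along every trajectory of $\vec{\Phi}$.

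Next, I would reparametrize $\vec{\Phi}$ so that $\theta_m$ advances at unit speed along each trajectory, i.e., replace $\vec{\Phi}$ by $\vec{\Phi}/\bigl(d\theta_m(\vec{\Phi})\bigr)$. Under the rescaled flow $\phi_t$, a configuration $\textbf{z}$ with $\theta_m(\textbf{z}) = s < r_2$ reaches $\theta_m^{-1}(r_2)$ at time exactly $r_2 - s$, so the formula
\[
H(\textbf{z}, t) := \begin{cases} \phi_{t(r_2 - \theta_m(\textbf{z}))}(\textbf{z}) & \text{if } \theta_m(\textbf{z}) < r_2, \\ \textbf{z} & \text{if } \theta_m(\textbf{z}) \ge r_2 \end{cases}
\]
defines a candidate deformation retraction of $\theta_m^{-1}[r_1, \infty)$ onto $\theta_m^{-1}[r_2, \infty)$.

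The main obstacle is verifying continuity of $H$ at the boundary level $\theta_m^{-1}(r_2)$. This reduces to a uniform positive lower bound on $d\theta_m(\vec{\Phi})$ near that level, since only then does $r_2 - \theta_m(\textbf{z}_k) \to 0$ suffice to force $H(\textbf{z}_k, 1) \to \textbf{z}^*$ for every sequence $\textbf{z}_k \to \textbf{z}^*$ with $\theta_m(\textbf{z}^*) = r_2$. The explicit construction of $\vec{\phi}_{\textbf{z}}$ in Proposition~\ref{move points away from boundary} as a radial expansion by the factor $t_R > 1$ provides a quantitative growth rate for $\theta_m$ along its trajectories that is preserved under partition-of-unity averaging; the hypothesis that no critical value lies in $[r_1, r_2)$ ensures this bound persists up to the boundary, which then yields the required continuity of $H$ and completes the deformation retract.
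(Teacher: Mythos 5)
The paper does not give its own proof of this lemma; it cites Plachta's Corollary 17, whose argument (as the paper describes it) ``solely relies on the affine Morse--Bott nature of $\theta_m$'' together with a statement like Lemma~\ref{deformation retract for non crit values}. You instead try to directly extend the smooth flow construction of Lemma~\ref{deformation retract for non crit values} up to the critical level, which is a genuinely different route. You correctly identify both the intended scope of the lemma (it is meant to be applied with $r_2=\tfrac{1}{2}$, possibly a critical value, since the literal statement with $r_2<\tfrac{1}{2}$ is already subsumed by Corollary~\ref{small regular values} and Lemma~\ref{deformation retract for non crit values}) and the real obstacle, namely continuity of the homotopy at $\theta_m^{-1}(r_2)$.

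The gap is in your final sentence: the assertion that the hypothesis ``no critical value in $[r_1,r_2)$'' forces a uniform positive lower bound on $d\theta_m(\vec{\Phi})$ near $\theta_m^{-1}(r_2)$ is not justified, and this is precisely the nontrivial content of the lemma. The local vector fields $\vec{\phi}_{\textbf{z}}$ from Lemma~\ref{increase tautological function} are radial dilations of the form $\hat{\phi}_R(\hat z)=t_R\hat z$ with $t_R-1=R/L_R$, and the admissible radius $R$ is bounded by $\tfrac{1}{16}\min\{r,1-r,r'-r,r''-r\}$. As a configuration $\textbf{z}$ approaches a critical configuration sitting at level $r_2$ (for instance, a packed configuration like the one in Figure~\ref{fig:K21(3)criticalpoint}), the quantities $r'-r$ and $r''-r$ tend to $0$, so $R\to 0$, $t_R\to 1$, and the pointwise rate at which $\theta_m$ increases along $\vec{\phi}_{\textbf{z}}$ degenerates. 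The partition-of-unity average inherits this degeneration: since $\theta_m^{-1}\bigl([r_1,r_2)\bigr)$ is not compact, the finite-cover argument that yields a positive lower bound in Lemma~\ref{deformation retract for non crit values} is unavailable, and the locally finite cover you propose gives no uniform bound. Consequently the reparametrized field $\vec{\Phi}/d\theta_m(\vec{\Phi})$ may blow up near $\theta_m^{-1}(r_2)$, and the trajectories need not converge as $t\to 1$, so continuity of $H$ is not established.

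What makes the lemma true is exactly the ingredient you did not use: Proposition~\ref{tautological function is affine Morse--Bott}. Because $\theta_m$ is affine on each cell of the finite polytopal subdivision $S$, its critical values are finite in number and its superlevel sets admit an explicit piecewise-linear collar structure below each critical value; this gives the deformation retraction onto $\theta_m^{-1}\bigl([r_2,\infty)\bigr)$ combinatorially, with no need for a uniform analytic estimate on a flow. That is the substance of Plachta's argument that the paper invokes. To repair your proof you would have to replace the final assertion by an argument of this PL type, e.g., exploit the fact that the cones $\mathrm{Cone}_{\textbf{z}}(\theta_m)$ are constant on the open cells of $S$ and build a piecewise-affine retraction cell by cell.
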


Plachta handles the case $K(n)=K_{0,1}(n)$ in \cite[Corollary 17]{plachta2021configuration}, using an argument that solely relies on the affine Morse--Bott nature of $\theta_{m}$ and a version of Lemma \ref{deformation retract for non crit values}.
Since we have proven analogous statements for general $K(n)$ in Proposition \ref{tautological function is affine Morse--Bott} and Lemma \ref{deformation retract for non crit values}, we omit the proof of Lemma \ref{retract from unit square to small square} and direct the reader to Plachta for the argument.

\begin{remark}
In the case $K(n)=K_{g,0}(n)$, Lemma \ref{retract from unit square to small square} is actually superfluous, as Corollary \ref{small regular values} can be strengthened.
Namely, our arguments in Propositions \ref{nearby points have similar contact graphs} and \ref{move points away from boundary} and Lemma \ref{increase tautological function}  can be extended to prove that all $r\in \big(0, \frac{n+1}{2n}\big)$ are regular values of $\theta_{m}$.
Since Lemma \ref{deformation retract for non crit values} can be extended in a similar manner, this is enough to prove Theorem \ref{point is square} in the $K(n)=K_{g,0}(n)$ setting.
\end{remark}

We use the previous lemma to prove that as long one does not have too many squares, the configuration space of open unit-squares in $K$ is homotopy equivalent to the ordered configuration space of points in $K$, noting that $\theta_{m}^{-1}\big([r, \infty)\big)$ can be identified with $SF_{m}\big(K(n), 2r\big)$.

\begin{T1}
  \thmtext
\end{T1} 

\begin{proof}\label{point is homotopy to square}
Let $\epsilon_{0}>\epsilon_{1}>\cdots>\epsilon_{i}>\cdots$ be a sequence of positive numbers $\epsilon_{i}$ such that $\lim_{i\to \infty}\epsilon_{i}=0$ and $\epsilon_{0}<1$.
We have $F_{m}\big(K(n)\big)=\bigcup_{i=0}^{\infty}SF_{m}\big(K(n), \epsilon_{i}\big)$.
Lemma \ref{retract from unit square to small square} proves that $SF_{m}\big(K(n), \epsilon_{i}\big)$ is a deformation retract of $SF_{m}\big(K(n), \epsilon_{i+1}\big)$ for all $i$; let $\rho_{i}$ be the corresponding deformation retraction.
Concatenating the $\rho_{i}$s yields a deformation retraction $\rho$ of $F_{m}\big(K(n)\big)$ onto the square configuration space $SF_{m}\big(K(n), \epsilon_{0}\big)$.
Combining this with the deformation retraction of $SF_{m}\big(K(n), \epsilon_{0}\big)=\theta_{m}^{-1}\big([\frac{\epsilon_{0}}{2}, \infty)\big)$ onto $\theta_{m}^{-1}\big([\frac{1}{2}, \infty)\big)=SF_{m}\big(K(n), 1\big)$ given by Lemma \ref{retract from unit square to small square} yields the homotopy equivalence of $F_{m}\big(K(n)\big)$ and $SF_{m}\big(K(n),d\big)$ for $d\le 1$.
The fact that configuration spaces are homotopy invariant up to homeomorphism of the base space yields the second homotopy equivalence.
\end{proof}

In the next section we prove that the configuration space of unit-squares in $K(n)$ is homotopy equivalent to a discrete configuration space arising from the cube complex structure of $K(n)$ in the $K_{g,0}(n)$ case and from $K^{*}(n)$ in the $K_{g,1}(n)$ case.
This equivalence gives a cubical model for the configuration space of points in a surface with up to one boundary component.

\section{Discrete Configuration Spaces}\label{discrete configuration spaces}

In this section we prove that the square configuration spaces of the previous section can be discretized in a particularly nice way, that is, we prove Theorem \ref{square is discrete}, which claims that $SF_{m}\big(K(n)\big)$ is homotopy equivalent to a certain cubical complex whose cells are labeled by products of cells in $K(n)$ or $K^{*}(n)$.
This was shown to be true for $K(n)=K_{0,1}(n)$ by Alpert, Bauer, Kahle, MacPherson, and Spendlove in \cite{alpert2023homology}.
To prove our theorem, we draw upon their techniques, while heavily relying on the ``polar coordinates'' of Lemmas \ref{section of small set no boundary} and \ref{section of small set with boundary}, and taking special care of the point $p\in K_{g,0}(n)$ for $g>1$, which greatly complicates the problem.
We begin by recalling the definition of a discrete configuration space.

\begin{defn}
Let $X$ be CW-complex.
The \emph{$m^{\text{th}}$-discrete ordered configuration space of $X$}, denoted \emph{$DF_{m}(X)$}, is the subcomplex of $X^{m}$ consisting of cells $\Sigma=\sigma_{1}\times \cdots\times \sigma_{m}$ whose constituent cells $\sigma_{i}$ have non-overlapping closures in $X$, i.e.,
\[
DF_{m}(X):=\left\{\sigma_{1}\times \cdots\times \sigma_{m}\in X^{m}|\overline{\sigma_{i}}\cap \overline{\sigma_{j}}=\emptyset\text{ if }i\neq j\right\}.
\]
The quotient of $DF_{m}(X)$ by the natural symmetric group action yields $CF_{m}(X)$ the \emph{$m^{\text{th}}$-discrete unordered configuration space of $X$}.
\end{defn}

We will be interested in the discrete configuration spaces $DF_{m}\big(K_{g,0}(n)\big)$ and $DF_{m}\big(K^{*}_{g,1}(n)\big)$, where $K^{*}_{g,1}(n)$ is the dual complex of $K_{g,1}(n)$.
Our interest in the former is obvious, the latter less so.
As such, we make the following observation.

\begin{prop}
The open $l^{\infty}$-ball of radius $\frac{1}{2}$ about $z\in K_{g,1}(n)$ has trivial intersection with the boundary of $K_{g,1}$ if and only if $z\in K^{*}_{g,1}(n)$.
\end{prop}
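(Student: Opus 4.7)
My plan is to reduce both conditions to the single geometric criterion $d_{\infty}(z,p)\geq 1$, where $p$ is the cone point of $K_{g,0}(n)$, viewing $K_{g,1}(n)$ as $K_{g,0}(n)$ with the open $l^{\infty}$-ball $B_{1/2}(p)$ removed and $K^{*}_{g,1}(n)$ as the corresponding subcomplex of $K_{g,0}(n)$ whose cells have closure disjoint from $p$.

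The first step is to show that $B_{1/2}(z)\cap \partial K_{g,1}(n)=\emptyset$ if and only if $d_{\infty}(z,p)\geq 1$. By definition every $w\in \partial K_{g,1}(n)$ satisfies $d_{\infty}(w,p)=\tfrac{1}{2}$, so the triangle inequality gives $d_{\infty}(z,p)\le d_{\infty}(z,w)+\tfrac{1}{2}$; this already shows that if some $w\in \partial K_{g,1}(n)$ lies in $B_{1/2}(z)$ then $d_{\infty}(z,p)<1$. For the converse, if $d_{\infty}(z,p)<1$ then a minimizing straight-line path from $z$ to $p$ in $K_{g,0}(n)$ lifts via the section of $\pi:\tilde{K}_{g,0}(n)\to K_{g,0}(n)$ guaranteed by Lemma \ref{section of small set no boundary} (applied on a small star neighborhood of $p$) to a Euclidean line segment from a lift $\tilde{z}$ to the origin lying in a single sheet of $\tilde{K}_{g,0}(n)$; the unique intermediate point on this segment at distance exactly $\tfrac{1}{2}$ from the origin projects to a point $w\in \partial K_{g,1}(n)$ with $d_{\infty}(z,w)=d_{\infty}(z,p)-\tfrac{1}{2}<\tfrac{1}{2}$, so $w\in B_{1/2}(z)$.

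The second step is to identify $K_{g,0}(n)\setminus K^{*}_{g,1}(n)$ with the open $l^{\infty}$-ball $B_{1}(p)$. Since $K^{*}_{g,1}(n)$ is the subcomplex of cells whose closure misses $p$, its complement is the open star of $p$: the union of $\{p\}$, the open $1$-cells with $p$ as an endpoint, and the open $2$-cells with $p$ as a corner. Working sheet by sheet in $\tilde{K}_{g,0}(n)$, the open $l^{\infty}$-ball of radius $1$ about the origin in a single copy of $S(n)$ is the Euclidean open square $(-1,1)\times(-1,1)$, which decomposes precisely as the origin, the four open half-edges along the coordinate axes, and the four open quadrants: in other words, as the open star of the origin in that sheet. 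Gluing the $2g-1$ sheets along the slits and pushing forward by $\pi$ shows $B_{1}(p)$ coincides with the open star of $p$ in $K_{g,0}(n)$, so $z\in K^{*}_{g,1}(n)$ if and only if $d_{\infty}(z,p)\geq 1$.

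Combining the two steps proves the proposition. The main obstacle is the second step: the $l^{\infty}$-ball about $p$ is not a Euclidean square because of the cone angle $(2g-1)2\pi$, so care is required in matching cells of $K_{g,0}(n)$ to the star geometry. Lifting to $\tilde{K}_{g,0}(n)$ via Lemma \ref{section of small set no boundary} is what makes the local picture flat and reduces the verification to an elementary computation in a single Euclidean sheet.
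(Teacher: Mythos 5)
Your proof is correct. Note that the paper states this proposition as an immediate observation and does not supply an argument, so there is no paper proof to compare against; your write-up is a valid and careful justification of what the author treats as evident. Two small remarks. First, your Step 2 (identifying $K_{g,0}(n)\setminus K^{*}_{g,1}(n)$ with $B_{1}(p)$) is actually built into the paper's own description of $K^{*}_{g,1}(n)$ in Section \ref{cube complexes}, where it is asserted that the dual complex arises from $K_{g,0}(n)$ "by removing the open $l^{\infty}$-ball of radius $1$ around $p$, that is, $K^{*}_{g,1}(n)$ is the subcomplex of $K_{g,0}(n)$ consisting of all cells whose closure does not contain $p$"; you are re-deriving this equivalence, which is harmless but could simply be cited. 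Second, in Step 1 the geodesic argument goes in the cleaner direction if phrased the other way around: first pick the lift $\tilde z$ of $z$ via the section, take the Euclidean ray from $\tilde z$ to the origin inside a single sheet of $\tilde K_{g,0}(n)$ (along which the $l^{\infty}$-distance is genuinely additive, since distances scale linearly along a ray through the origin), choose the point at $l^{\infty}$-distance $\frac12$ from the origin, and then push forward by $\pi$; "lifting a straight line in $K_{g,0}(n)$" presupposes the existence of a straight line through the cone point, which is really the same construction run backwards. Neither issue affects the correctness of the proof, and the triangle-inequality direction of Step 1 is exactly right.
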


It follows that a configuration in $SF_{m}(K_{g,1}(n)\big)$ can be thought of as a point in $\big(K^{*}_{g,1}(n)\big)^{m}$.
We will show that if $m\le n$, the square configuration spaces $SF_{m}\big(K_{g,0}(n)\big)$ and $SF_{m}\big(K_{g,1}(n)\big)$ are homotopy equivalent to $DF_{m}\big(K_{g,0}(n)\big)$ and $DF_{m}\big(K^{*}_{g,1}(n)\big)$, respectively.
We begin by noting that $DF_{m}(K_{g,0}(n)\big)$ and $DF_{m}(K^{*}_{g,1}(n)\big)$ are subspaces of their respective square configuration spaces.

\begin{prop}\label{fully contained cell}
A cell $\Sigma=\sigma_{1}\times\cdots\times\sigma_{m}$ of $\big(K_{g,0}(n)\big)^{m}$ is fully contained in contained in $SF_{m}\big(K_{g,0}(n)\big)$ if and only if $\Sigma$ is a cell of $DF_{m}\big(K_{g,0}(n)\big)$.
Similarly, a cell $\Sigma=\sigma_{1}\times\cdots\times\sigma_{m}$ of $\big(K^{*}_{g,1}(n)\big)^{m}$ is fully contained in  contained in $SF_{m}\big(K_{g,1}(n)\big)$ if and only if $\Sigma$ is a cell of $DF_{m}\big(K^{*}_{g,1}(n)\big)$.
\end{prop}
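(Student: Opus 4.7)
Since $SF_{m}(K_{g,b}(n))$ is cut out of $(K_{g,b}(n))^{m}$ by the non-strict inequalities $d_{\infty}(z_{i}, z_{j}) \geq 1$ and $d_{\infty}(z_{i}, \partial K_{g,b}(n)) \geq \tfrac{1}{2}$, it is closed in $(K_{g,b}(n))^{m}$. The cell $\Sigma = \sigma_{1} \times \cdots \times \sigma_{m}$ is therefore fully contained in $SF_{m}$ if and only if $\overline{\Sigma} = \overline{\sigma_{1}} \times \cdots \times \overline{\sigma_{m}}$ is. In the bordered case I will first observe that the boundary inequality is automatic: by construction $K^{*}_{g,1}(n) = K_{g,0}(n) \setminus B_{1}(p)$ while $\partial K_{g,1}(n) = \partial B_{1/2}(p)$, so the triangle inequality forces $d_{\infty}(z, \partial K_{g,1}(n)) \geq \tfrac{1}{2}$ for every $z \in K^{*}_{g,1}(n)$. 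The proposition will then reduce to the pairwise cell statement: for distinct cells $\sigma, \sigma'$ of $K_{g,0}(n)$ (respectively of $K^{*}_{g,1}(n)$), one has $\overline{\sigma} \cap \overline{\sigma'} = \emptyset$ if and only if $d_{\infty}(z, z') \geq 1$ for all $z \in \overline{\sigma}, z' \in \overline{\sigma'}$.

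The $(\Leftarrow)$ direction is a contrapositive triviality, since any common point $w \in \overline{\sigma} \cap \overline{\sigma'}$ yields the pair $(w, w) \in \overline{\Sigma}$ with $d_{\infty} = 0 < 1$. For the $(\Rightarrow)$ direction I will argue by contradiction: assume $\overline{\sigma} \cap \overline{\sigma'} = \emptyset$ while some $z \in \overline{\sigma}, z' \in \overline{\sigma'}$ realize $d_{\infty}(z, z') < 1$, and fix a path $\gamma$ from $z$ to $z'$ of $l^{\infty}$-length less than $1$. I can then enclose $\overline{\sigma} \cup \gamma \cup \overline{\sigma'}$ in a path-connected contractible subspace $U$ of intrinsic Chebyshev diameter at most $3$; for $n \geq 3$, Lemma \ref{section of small set no boundary} (or Lemma \ref{section of small set with boundary} in the bordered case) produces an injective distance-preserving section $s : U \to \tilde{K}(n)$. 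Consequently $s(\overline{\sigma})$ and $s(\overline{\sigma'})$ are disjoint closed cells of the cube complex structure on $\tilde{K}(n)$ at intrinsic Chebyshev distance strictly less than $1$. I close the argument by invoking the elementary fact that distinct cells of $\tilde{K}(n)$ either share a face (so their closures intersect) or lie at Chebyshev distance at least $1$: within any single Euclidean sheet $S(n) \subset \R^{2}$ this is the standard claim for the unit-square tiling of $\R^{2}$, and for cells spread across several of the $2g-1$ sheets of $\tilde{K}(n)$ it follows from the observation that sheets are glued only along the radial slit and at the cone vertex, so that an inter-sheet Chebyshev distance strictly less than $1$ between disjoint-closure cells is impossible. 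Small-$n$ situations, where the diameter hypothesis of Lemma \ref{section of small set no boundary} may fail, are handled by direct inspection of the finite cube complex $K_{g,b}(n)$.

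The principal obstacle is controlling the lift when one of $\overline{\sigma}, \overline{\sigma'}$ is adjacent to the cone point $p$, because the image $s(U)$ may then wind through several of the $2g-1$ sheets of $\tilde{K}(n)$ and the flat $\R^{2}$-grid argument applies only sheet-by-sheet. The polar-coordinate section of Lemma \ref{section of small set no boundary} is indispensable here because it preserves both Chebyshev distances and the combinatorial adjacency of cells, reducing the cross-sheet question to a finite case analysis at the cone vertex.
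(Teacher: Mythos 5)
Your proof is correct, and it reaches the same chain of equivalences as the paper's (disjoint closures $\Leftrightarrow$ Chebyshev distance $\geq 1$ $\Leftrightarrow$ all pairs of points admit disjoint open unit-squares), but it supplies substantially more justification for the key geometric step. The paper's own proof is terse: it simply asserts that $\overline{\sigma_{i}}\cap\overline{\sigma_{j}}=\emptyset$ implies $d_{\infty}(\overline{\sigma_{i}},\overline{\sigma_{j}})\ge 1$ and conversely, treating both directions as immediate consequences of the unit-square tiling, and it makes no explicit mention of the boundary condition in the bordered case. You, by contrast, verify the implication by lifting through the polar-coordinate section of Lemma \ref{section of small set no boundary} / \ref{section of small set with boundary} and reducing to the flat grid in $\R^{2}$, and you explicitly note that the boundary inequality $d_{\infty}(z,\partial K_{g,1}(n))\geq\tfrac{1}{2}$ is automatic for $z\in K^{*}_{g,1}(n)$ (which the paper records in the unnumbered proposition preceding this one but does not re-invoke in the proof). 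Your route is valid and actually more careful than the paper's around the cone point $p$; the only place you wave your hands is the cross-sheet claim in $\tilde{K}(n)$ -- you should observe that the slit lies along grid lines and the cells touching the origin all share the origin, so the standard tiling argument does in fact persist across sheets -- but this is no vaguer than the paper's one-line assertion. The lifting machinery is arguably overkill for this proposition (the paper reserves it for Propositions \ref{well-defined lift} and \ref{local coordinate check boundary}), but it does no harm, and it makes the role of the cone point explicit where the paper leaves it implicit.
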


\begin{proof}
If $\Sigma$ is in $DF_{m}\big(K_{g,0}(n)\big)$, resp. $DF_{m}\big(K^{*}_{g,1}(n)\big)$, then $\overline{\sigma_{i}}\cap \overline{\sigma_{j}}=\emptyset$ for all $i\neq j$, so $d_{\infty}(\overline{\sigma_{i}},\overline{\sigma_{j}})\ge 1$ for all $i\neq j$.
Therefore, for every $z_{i}\in \overline{\sigma_{i}}$ and $z_{j}\in \overline{\sigma_{j}}$, the open unit-diameter $l^{\infty}$-balls around $z_{i}$ and $z_{j}$, i.e., the open unit-squares centered at $z_{i}$ and $z_{j}$, do not intersect.
It follows that such a $\Sigma$ is fully contained in $SF_{m}\big(K_{g,0}(n)\big)$, resp. $SF_{m}\big(K_{g,1}(n)\big)$.

Similarly, if $\Sigma$ is fully contained in $SF_{m}\big(K_{g,0}(n)\big)$, resp. $SF_{m}\big(K_{g,1}(n)\big)$, then for $i\neq j$ and all $z_{i}\in \overline{\sigma_{i}}$ and $z_{j}\in \overline{\sigma_{j}}$, the open unit-diameter $l^{\infty}$-balls around $z_{i}$ and $z_{j}$ do not intersect.
It follows that $d_{\infty}(\overline{\sigma_{i}},\overline{\sigma_{j}})\ge 1$, so $\overline{\sigma_{i}}\cap \overline{\sigma_{j}}=\emptyset$ for all $i\neq j$ and $\Sigma$ is a cell of $DF_{m}\big(K_{g,0}(n)\big)$, resp. $DF_{m}\big(K^{*}_{g,1}(n)\big)$.
\end{proof}

In order to retract $SF_{m}\big(K_{g,0}(n)\big)$ onto $DF_{m}\big(K_{g,0}(n)\big)$ and $SF_{m}\big(K^{*}_{g,1}(n)\big)$ onto $DF_{m}\big(K^{*}_{g,1}(n)\big)$, we need to handle the cells of $\big(K_{g,0}(n)\big)^{m}$ and $\big(K^{*}_{g,1}(n)\big)^{m}$ that are only partially contained in $SF_{m}\big(K_{g,0}(n)\big)$ and $SF_{m}\big(K^{*}_{g,1}(n)\big)$.
These partially contained cells $\Sigma=\sigma_{1}\times\cdots\times \sigma_{m}$ are such that $\overline{\sigma_{i}}\cap\overline{\sigma_{j}}\neq \emptyset$ for some $i\neq j$, though there exists a point $\textbf{z}$ in $\Sigma$ that is a configuration in  $SF_{m}\big(K(n)\big)$.
We claim that the barycenter of a partially contained cell is such a square configuration.

\begin{prop}\label{barycenter in square}
Let $\Sigma=\sigma_{1}\times\cdots\times \sigma_{m}$ be a cell of $\big(K_{g,0}(n)\big)^{m}$, resp. $\big(K^{*}_{g,1}(n)\big)^{m}$, that is partially contained in $SF_{m}\big(K_{g,0}(n)\big)$, resp. $SF_{m}\big(K_{g,1}(n)\big)$, and let $\textbf{m}$ be its barycenter.
The configuration $\textbf{m}$ lies in $SF_{m}\big(K_{g,0}(n)\big)$, resp. $SF_{m}\big(K_{g,1}(n)\big)$.
\end{prop}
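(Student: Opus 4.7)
The plan is to verify separately the two conditions that define $SF_m\big(K(n)\big)$: the boundary condition $d_\infty\big(m_i, \partial K_{g,1}(n)\big) \ge \tfrac{1}{2}$ (in the $K_{g,1}$ case), and the pairwise separation $d_\infty(m_i, m_j) \ge 1$ for each $i \ne j$. The boundary condition is immediate: since $\Sigma$ is a cell of $\big(K^{*}_{g,1}(n)\big)^m$, each barycenter $m_i$ lies in $K^{*}_{g,1}(n)$, and the proposition immediately preceding the statement characterizes $K^{*}_{g,1}(n)$ as exactly those points whose open $l^{\infty}$-ball of radius $\tfrac{1}{2}$ has trivial intersection with $\partial K_{g,1}(n)$.

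For the pairwise separation, I would fix $i \ne j$ and use partial containment to select $\textbf{z} \in \Sigma \cap SF_m\big(K(n)\big)$, giving $z_i \in \sigma_i$ and $z_j \in \sigma_j$ with $d_\infty(z_i, z_j) \ge 1$. If $\overline{\sigma_i} \cap \overline{\sigma_j} = \emptyset$, then because $K(n)$ is built from unit cells sitting on an integer lattice, any two closed cells sharing no subcell are $l^{\infty}$-separated by at least $1$; combined with $m_i \in \overline{\sigma_i}$ and $m_j \in \overline{\sigma_j}$, this forces $d_\infty(m_i, m_j) \ge 1$ automatically.

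When $\overline{\sigma_i} \cap \overline{\sigma_j} \ne \emptyset$, the two cells sit in a contractible subspace of diameter at most $2 \le n$, so I would invoke Lemma \ref{section of small set no boundary} (or Lemma \ref{section of small set with boundary} in the $K_{g,1}$ case) to obtain an isometric section of $\pi: \tilde{K}(n) \to K(n)$ on this subspace. Composing with the projection $\tilde{K}(n) \to \R^{2}$ realizes $\hat{\sigma}_i$ and $\hat{\sigma}_j$ as cells in the plane whose $x$- and $y$-projections are each either a singleton $\{k\}$ or an open unit interval $(k, k+1)$ for some integer $k$. Since the lift is isometric, $|\hat{x}_i - \hat{x}_j| \ge 1$ or $|\hat{y}_i - \hat{y}_j| \ge 1$; without loss of generality, the former. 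A short case analysis on the three possibilities for the pair of $x$-projections (two open unit intervals, one of each, or two singletons) shows that achievability of $|\hat{x}_i - \hat{x}_j| \ge 1$ forces the midpoints of the two $x$-projections to differ by at least $1$, giving $d_\infty(\hat{m}_i, \hat{m}_j) \ge 1$ and hence $d_\infty(m_i, m_j) \ge 1$ by the isometry of the lift.

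The main subtlety is verifying that the polar-coordinate lift remains available and isometric even when $\overline{\sigma_i} \cup \overline{\sigma_j}$ contains the cone point $p$ of $K_{g,0}(n)$, around which the total angle is $(2g-1)2\pi$; this is precisely the content of Lemma \ref{section of small set no boundary}. Once the lift is in hand, the whole argument reduces to elementary planar geometry on configurations of unit squares, edges, and vertices in $\R^{2}$, and the bound $d_\infty(\hat{m}_i, \hat{m}_j) \ge 1$ in fact holds with room to spare except in the edge-adjacent case, where equality is achieved.
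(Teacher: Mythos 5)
Your overall plan is sound and close in spirit to the paper's proof, but there is a genuine gap in how you handle the cone point $p$, which you flag as ``the main subtlety'' and then incorrectly dismiss. Lemma~\ref{section of small set no boundary} gives an isometric section $U \to \tilde{K}_{g,0}(n)$, but the further projection $\tilde{K}_{g,0}(n) \to \R^{2}$ is only distance \emph{non-increasing}, not isometric; it collapses distinct cells incident to $p$ onto one another when they lie in different copies of $S(n)$ in $\tilde{K}_{g,0}(n)$. Consequently, the inference ``$d_\infty(z_i,z_j)\ge 1$, hence $|\hat{x}_i-\hat{x}_j|\ge 1$ or $|\hat{y}_i-\hat{y}_j|\ge 1$'' is false in exactly the case you meant to address. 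For a concrete failure: let $\sigma_i$ and $\sigma_j$ be two distinct horizontal $1$-cells incident to $p$ whose lifts land in different sheets of $\tilde{K}_{g,0}(n)$ and project to the same edge from $(0,0)$ to $(1,0)$ in $\R^{2}$. Then $\hat{x}_i, \hat{x}_j \in (0,1)$ so $|\hat{x}_i-\hat{x}_j|<1$, yet $d_\infty(z_i,z_j)=\hat{x}_i+\hat{x}_j$ (the geodesic passes through $p$) can equal or exceed $1$. The relevant inequality near $p$ is therefore $|\hat{x}_i+\hat{x}_j|\ge 1$ (or the $y$-analogue), not a difference; this is exactly the third and fourth conditions in Proposition~\ref{local coordinate check boundary}, which the paper works out in detail. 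Your concluding claim that ``the midpoints of the two $x$-projections differ by at least $1$'' is also false here --- they coincide --- though the barycenters do still satisfy $\hat{x}_i+\hat{x}_j = \tfrac12+\tfrac12 = 1$, so the statement you are proving is true.

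It is also worth noting that the paper's actual proof of this proposition avoids the lift machinery entirely: it enumerates the eight local arrangements of a pair of intersecting closed cells (Figure~\ref{fig:cellsofdistancezero}), shows the last four force $\Sigma$ to miss $SF_m$, notes that the cases involving $p$ force the cells to be parallel or perpendicular as required, and observes directly that the first four arrangements have barycenters at Chebyshev distance exactly $1$. Your route through the section and projection is more machinery than the proposition needs, and it introduces precisely the pitfall at $p$ described above; the paper defers the coordinate/inequality analysis to Proposition~\ref{local coordinate check boundary}, where it is carried out with the sum and difference conditions kept carefully separate. To repair your argument, either adopt the paper's direct case enumeration, or, if you want to keep the lift, replace the claim ``$|\hat{x}_i-\hat{x}_j|\ge 1$'' with the correct disjunction over difference and sum inequalities depending on whether the two lifted cells lie in the same or different sheets of $\tilde{K}_{g,0}(n)$, and redo the midpoint case analysis accordingly.
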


\begin{proof}
If $\Sigma=\sigma_{1}\times\cdots\times \sigma_{m}$ is partially contained in $SF_{m}\big(K(n)\big)$, then Proposition \ref{fully contained cell} implies that $\overline{\sigma_{i}}\cap\overline{\sigma_{j}}\neq \emptyset$ for some $i\neq j$.
Note that neither $\sigma_{i}$ nor $\sigma_{j}$ can be a $0$-cell as if this were the case, the Chebyshev distance between any pair of points in $\sigma_{i}$ and $\sigma_{j}$ would be less than $1$, so $\Sigma$ would have trivial intersection with $SF_{m}\big(K(n)\big)$.
Thus, we only need to consider cells $\Sigma=\sigma_{1}\times\cdots\times \sigma_{m}$ where neither cell in a pair $\sigma_{i}, \sigma_{j}$ whose closures have non-trivial intersection is a $0$-cell.

In Figure \ref{fig:cellsofdistancezero} we depict the eight possible arrangements of such $\overline{\sigma_{i}}$ and $\overline{\sigma_{j}}$.
If $K(n)=K_{g,0}(n)$ and the intersection of the first type of pair is at $p$, then $\sigma_{i}$ and $\sigma_{j}$ are parallel, i.e., either both vertical or both horizontal.
Similarly, if $K(n)=K_{g,0}(n)$ and the intersection of the seventh type of pair is at $p$, then $\sigma_{i}$ and $\sigma_{j}$ are perpendicular, i.e., one is horizontal and the other is vertical.
If $\overline{\sigma_{i}}$ and $\overline{\sigma_{j}}$ have non-trivial intersection and $\sigma_{i}$ and $\sigma_{j}$ are any of the second four forms depicted in Figure \ref{fig:cellsofdistancezero}, then $\Sigma$ has trivial intersection with $SF_{m}\big(K(n)\big)$ as $d_{\infty}(z_{i}, z_{j})<1$ for all $z_{i}\in \sigma_{i}$ and $z_{j}\in \sigma_{j}$.
On the other hand, if every pair $\sigma_{i}$ and $\sigma_{j}$ such that $\overline{\sigma_{i}}\cap \overline{\sigma_{j}}\neq \emptyset$ is of one of the first four forms depicted in Figure \ref{fig:cellsofdistancezero}, then $\Sigma$ has non-trivial intersection with $SF_{m}\big(K(n)\big)$ as the Chebyshev distance between the barycenters of such cells is $1$.
It follows that if $\Sigma=\sigma_{1}\times\cdots\times \sigma_{m}$ is partially contained in $SF_{m}\big(K(n)\big)$, then every pair of $\sigma_{i}$ and $\sigma_{j}$ whose closures have non-trivial intersection is of one of the first four forms depicted in Figure \ref{fig:cellsofdistancezero}, and the barycenter $\textbf{m}$ of $\Sigma$ is in $SF_{m}\big(K(n)\big)$.
\begin{figure}[h]
    \centering
    \includegraphics[width=.75\linewidth]{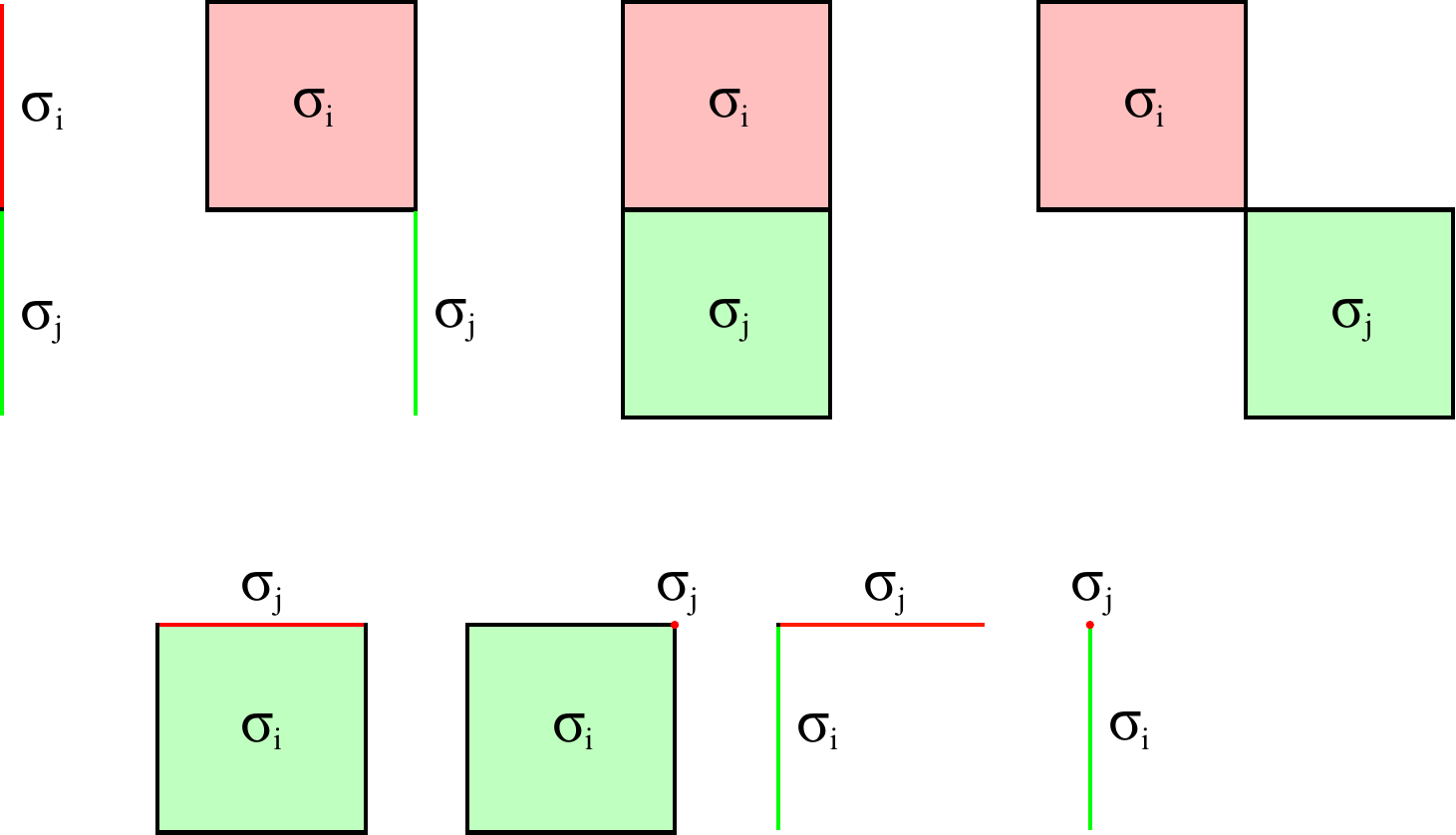}
    \caption{
    Distinct cells in $K_{g,0}(n)$ and $K^{*}_{g,1}(n)$ whose closures have non-trivial intersection. 
If $\sigma_{i}$ and $\sigma_{j}$ are of the four forms depicted in the first row, then there exist two disjoint open unit-squares, with one centered in $\sigma_{i}$ and the other in $\sigma_{j}$.
There are no such disjoint squares if $\sigma_{i}$ and $\sigma_{j}$ are of the four forms depicted in the second row.
    }
\label{fig:cellsofdistancezero}
\end{figure}
\end{proof}

Given a cell $\Sigma=\sigma_{1}\times\cdots\times \sigma_{m}$ partially contained in $SF_{m}\big(K(n)\big)$, we would like to determine which points $\textbf{z}$ of $\Sigma$ are configurations in $SF_{m}\big(K(n)\big)$.
We do this by finding inequalities arising from the points constituting $\textbf{z}$; to find such inequalities we note the existence of a particularly nice map from $\Sigma$ to $\R^{2m}$.

\begin{prop}\label{well-defined lift}
For $m\le n$, let $\Sigma=\sigma_{1}\times \cdots\times \sigma_{m}$ be a cell in $(K_{g,0}(n)\big)^{m}$, resp. $(K^{*}_{g,1}(n)\big)^{m}$, that is at least partially contained in $SF_{m}\big(K_{g,0}(n)\big)$, resp. $SF_{m}\big(K_{g,1}(n)\big)$.
There is a lift of each component of $\bigcup_{i=1}^{m}\overline{\sigma_{i}}$ to $\tilde{K}_{g,0}(n)$, resp. $\tilde{K}^{*}_{g,1}(n)$.
\end{prop}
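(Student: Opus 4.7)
Plan. My plan is to apply Lemma~\ref{section of small set no boundary} (in the $K_{g,0}(n)$ case) or Lemma~\ref{section of small set with boundary} (in the $K_{g,1}(n)$ case) to a contractible enlargement $\hat C$ of each component $C$. First I bound the Chebyshev diameter of $C$: since $C$ is a connected union of at most $m$ closed cells of the ambient complex, each of Chebyshev diameter at most $1$, any two points of $C$ can be joined by a path visiting each such cell at most once, so $\mathrm{diam}_\infty(C) \le m \le n$. (In particular $C$ need not itself be contractible---e.g., when $m \ge 8$ the cells can form a ring of squares around a hole---which is why a direct application of the lemma to $C$ is not enough.)

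Next I take $\hat C$ to be the smallest simply-connected subcomplex containing $C$, i.e., $C$ together with every bounded complementary component of $K_{g,0}(n)\setminus C$ (respectively $K^*_{g,1}(n)\setminus C$). By Proposition~\ref{injectivity radius}, any essential loop of $K_{g,0}(n)$ has Chebyshev length exceeding $n$, so the diameter bound on $C$ prevents it from winding around any essential loop; consequently $C$ sits in a single local chart that lifts flatly to $\tilde K_{g,0}(n)$. In this chart, $\hat C$ is contained in the Chebyshev bounding box of $C$, yielding $\mathrm{diam}_\infty(\hat C) \le \mathrm{diam}_\infty(C) \le n$. Contractibility of $\hat C$ then follows from Hurewicz and Whitehead: it is simply connected by construction, and since $\hat C$ is a proper $2$-dimensional subcomplex (as $m \le n$ while $K_{g,0}(n)$ has $(2g-1)(n+1)^2$ two-cells, and $K^*_{g,1}(n)$ already has $H_2=0$), one also has $H_2(\hat C) = 0$.

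Finally, Lemma~\ref{section of small set no boundary} (respectively Lemma~\ref{section of small set with boundary}) produces a continuous section $s\colon \hat C \to \tilde K_{g,0}(n)$ (respectively $\tilde K^*_{g,1}(n)$) of $\pi$, and restricting $s$ to $C\subset\hat C$ gives the desired lift. The main obstacle I anticipate is the second step: in a surface with nontrivial topology, the ``bounding box'' of a connected subcomplex is not a priori well defined, so the argument leans critically on Proposition~\ref{injectivity radius} to ensure that $C$ admits a flat local chart, inside which the bounding-box construction (and hence the diameter bound on $\hat C$) makes sense.
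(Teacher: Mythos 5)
Your observation that a component $C$ of $\bigcup_i \overline{\sigma_i}$ need not be contractible as a space (a ring of squares around a hole is a genuine example) is correct, and you are right that this clashes with the literal hypothesis ``contractible subspace'' in Lemma~\ref{section of small set no boundary}. However, the paper does not try to prove that each component is contractible; it argues only that each component is \emph{contractible in} $K_{g,0}(n)$ (resp.\ homotopic into the boundary), i.e.\ that every loop in the component becomes null-homotopic once pushed into $K$. If you look at what the proof of Lemma~\ref{section of small set no boundary} actually uses, that weaker condition is all that is needed for well-definedness of the ``angle-and-length'' section: the only step that invokes contractibility is the one showing that two lifts of a point agree, and for that it suffices that $\gamma^{-1}\gamma'$ be null-homotopic in $K$, which follows from the diameter bound together with Proposition~\ref{injectivity radius}. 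Your diameter estimate $\mathrm{diam}_\infty(C)\le m\le n$ is the same one the paper uses; the cleanest repair of the apparent mismatch is therefore to observe that the Lemma's hypothesis can be weakened to this, not to build a larger contractible set.

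Your route via the filled-in subcomplex $\hat C$ also has gaps of its own. The step ``$C$ sits in a single local chart that lifts flatly to $\tilde K_{g,0}(n)$'' is close to assuming the conclusion you want (the existence of a lift), and it does not treat the case $p\in C$: no flat chart exists around the cone point, which is exactly the case Lemma~\ref{section of small set no boundary} was designed to handle via the angle-and-length construction. Moreover ``smallest simply-connected subcomplex containing $C$'' is only meaningful after a chart has been chosen, which is the circularity you yourself flag; and being contained in the Chebyshev bounding box of $C$ controls only the \emph{extrinsic} diameter of $\hat C$, whereas Lemma~\ref{section of small set no boundary} is stated in terms of the intrinsic Chebyshev metric, and the two need not agree for a non-convex $\hat C$. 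So the extra machinery introduces more problems than it solves; the paper's shorter argument is the one to follow, perhaps with the explicit remark that null-homotopy in $K$ (guaranteed by the injectivity radius) suffices in place of contractibility.
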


\begin{proof}
Consider the connected components of $\bigcup_{i=1}^{m}\overline{\sigma_{i}}$.
Every path in each component can be homotoped relative its endpoints to a path of length at most $m\le n$; as such, it follows from Proposition \ref{injectivity radius} that each component of $\bigcup_{i=1}^{m}\overline{\sigma_{i}}$ is contractible in $K_{g,0}(n)$, resp. is homotopic into the boundary of $K^{*}_{g,1}(n)$.
Therefore, Lemma \ref{section of small set no boundary}, resp. Lemma \ref{section of small set with boundary}, proves that there is a section of $\pi:\tilde{K}_{g,0}(n)\to K_{g,0}(n)$, resp. $\pi:\tilde{K}^{*}_{g,1}(n)\to K^{*}_{g,1}(n)$, on each component.
This yields the desired lift. 
\end{proof}

Note that there are projection maps from $\tilde{K}_{g,0}(n)$ and $\tilde{K}^{*}_{g,1}(n)$ to $\R^{2}$ that arise from the definition of $\tilde{K}_{g,0}(n)$ as a quotient of the slit squares $S(n)$.
If $\tilde{z}$ is a point in $\tilde{K}_{g,0}(n)$, resp. $\tilde{K}^{*}_{g,1}(n)$, we let $\hat{z}$ denote the image of this point in $S(n)$, resp. $A^{*}(n)$, in $\R^{2}$.
With this in mind, if $\textbf{z}$ is a point in $\big(K(n)\big)^{m}$, and $\tilde{\textbf{z}}$ is one of its lifts to $\big(\tilde{K}(n)\big)^{m}$, then we write $\hat{\textbf{z}}$ for the image of this point in $\big(S(n)\big)^{m}\subset \R^{2m}$.
Note that $\textbf{z}$ need not be a configuration, and even it if is, $\hat{\textbf{z}}$ might not be.
Moreover, given a $\textbf{z}$ there is not a unique $\hat{\textbf{z}}$.
As will soon see, this is not an issue as the types of inequalities that arise do not depend on the choice of our lifts.

We use this map to find inequalities in $\R^{2}$ that configurations of two points $z_{1}$ and $z_{2}$ must satisfy in order for $\textbf{z}=(z_{1}, z_{2})$ to be contained in $SF_{2}\big(K(n)\big)$.
Before we do that, we introduce some notation.
Given a cell $\sigma$ in $K_{g,0}(n)$ or $K^{*}_{g,1}(n)$ and its image $\hat{\sigma}$ in $\R^{2}$ under the projection of a lift, let $\big(\text{bary}_{x}(\hat{\sigma}), \text{bary}_{y}(\hat{\sigma})\big)$ denote the coordinates of the barycenter of $\hat{\sigma}$.

\begin{prop}\label{local coordinate check boundary}
Let $\Sigma=\sigma_{1}\times\sigma_{2}$ be an open cell of $\big(K^{*}_{g,1}(n)\big)^{2}$ that is partially contained in $SF_{2}\big(K_{g,1}(n)\big)$, and let $\textbf{z}=(x_{1}, y_{1}; x_{2}, y_{2})$ be a point in $\Sigma$.
Then $\textbf{z}\in SF_{2}\big(K_{g,1}(n)\big)$ if and only if at least one of the following two conditions holds:
\begin{enumerate}
    \item $|\text{bary}_{x}(\hat{\sigma}_{2})-\text{bary}_{x}(\hat{\sigma}_{1})|=1$ and $\big(\hat{x}_{2}-\text{bary}_{x}(\hat{\sigma}_{2})\big)-\big(\hat{x}_{1}-\text{bary}_{x}(\hat{\sigma}_{1})\big)$ is $0$ or has the same sign as $\text{bary}_{x}(\hat{\sigma}_{2})-\text{bary}_{x}(\hat{\sigma}_{1})$, or

    \item $|\text{bary}_{y}(\hat{\sigma}_{2})-\text{bary}_{y}(\hat{\sigma}_{1})|=1$ and $\big(\hat{y}_{2}-\text{bary}_{y}(\hat{\sigma}_{2})\big)-\big(\hat{y}_{1}-\text{bary}_{y}(\hat{\sigma}_{1})\big)$ is $0$ or has the same sign as $\text{bary}_{y}(\hat{\sigma}_{2})-\text{bary}_{y}(\hat{\sigma}_{1})$,

\end{enumerate}

Similarly, if $\Sigma=\sigma_{1}\times\sigma_{2}$ be an open cell of $\big(K_{g,0}(n)\big)^{2}$ that is partially contained in $SF_{2}\big(K_{g,0}(n)\big)$, and $\textbf{z}=(x_{1}, y_{1}; x_{2}, y_{2})$ is a point in $\Sigma$.
Then $\textbf{z}\in SF_{2}\big(K_{g,0}(n)\big)$ if and only if at least one of the following four conditions holds:
\begin{enumerate}
   \item $|\text{bary}_{x}(\hat{\sigma}_{2})-\text{bary}_{x}(\hat{\sigma}_{1})|=1$ and $\big(\hat{x}_{2}-\text{bary}_{x}(\hat{\sigma}_{2})\big)-\big(\hat{x}_{1}-\text{bary}_{x}(\hat{\sigma}_{1})\big)$ is $0$ or has the same sign as $\text{bary}_{x}(\hat{\sigma}_{2})-\text{bary}_{x}(\hat{\sigma}_{1})$, 

    \item $|\text{bary}_{y}(\hat{\sigma}_{2})-\text{bary}_{y}(\hat{\sigma}_{1})|=1$ and $\big(\hat{y}_{2}-\text{bary}_{y}(\hat{\sigma}_{2})\big)-\big(\hat{y}_{1}-\text{bary}_{y}(\hat{\sigma}_{1})\big)$ is $0$ or has the same sign as $\text{bary}_{y}(\hat{\sigma}_{2})-\text{bary}_{y}(\hat{\sigma}_{1})$,

    \item $|\hat{x}_{1}|, |\hat{x}_{2}|, |\hat{y}_{1}|, |\hat{y}_{2}|<1$, $|\text{bary}_{x}(\hat{\sigma}_{2})-\text{bary}_{x}(\hat{\sigma}_{1})|=0$, and $|\hat{x}_{2}+\hat{x}_{1}|\ge 1$, or

     \item $|\hat{x}_{1}|, |\hat{x}_{2}|, |\hat{y}_{1}|, |\hat{y}_{2}|<1$, $|\text{bary}_{y}(\hat{\sigma}_{2})-\text{bary}_{y}(\hat{\sigma}_{1})|=0$, and $|\hat{y}_{2}+\hat{y}_{1}|\ge 1$.
\end{enumerate}
\end{prop}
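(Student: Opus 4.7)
The plan is to interpret ``$\textbf{z}\in SF_{2}\big(K(n)\big)$'' as $d_{\infty}(z_{1},z_{2})\ge 1$, use Proposition \ref{well-defined lift} to lift $\overline{\sigma_{1}}\cup\overline{\sigma_{2}}$ into $\tilde{K}(n)$, and then push the inequality down to $\R^{2}$ via the projection coordinates $(\hat{x}_{i},\hat{y}_{i})$. Since $\Sigma$ is only partially contained in $SF_{2}\big(K(n)\big)$, Proposition \ref{fully contained cell} forces $\overline{\sigma_{1}}\cap\overline{\sigma_{2}}\neq\emptyset$, and Proposition \ref{barycenter in square} restricts $(\sigma_{1},\sigma_{2})$ to the four admissible forms in the top row of Figure \ref{fig:cellsofdistancezero}; for these forms, the proof of Proposition \ref{barycenter in square} already shows that the Chebyshev distance between the two lifted barycenters is exactly $1$, so at least one of $|\mathrm{bary}_{x}(\hat{\sigma}_{2})-\mathrm{bary}_{x}(\hat{\sigma}_{1})|$ and $|\mathrm{bary}_{y}(\hat{\sigma}_{2})-\mathrm{bary}_{y}(\hat{\sigma}_{1})|$ is equal to $1$.

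I would then unpack what unit separation means after lifting via the identity
\[
\hat{x}_{2}-\hat{x}_{1}=\bigl[\mathrm{bary}_{x}(\hat{\sigma}_{2})-\mathrm{bary}_{x}(\hat{\sigma}_{1})\bigr]+\bigl[\hat{x}_{2}-\mathrm{bary}_{x}(\hat{\sigma}_{2})\bigr]-\bigl[\hat{x}_{1}-\mathrm{bary}_{x}(\hat{\sigma}_{1})\bigr],
\]
noting that each offset $\hat{x}_{i}-\mathrm{bary}_{x}(\hat{\sigma}_{i})$ lies in $[-\tfrac{1}{2},\tfrac{1}{2}]$. Since the barycenter difference has absolute value at most $1$, the inequality $|\hat{x}_{2}-\hat{x}_{1}|\ge 1$ can hold only when the barycenter difference is exactly $\pm 1$, and it holds in that case precisely when the offset difference is zero or agrees in sign with the barycenter difference. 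This is exactly condition~(1), and the vertical analogue gives condition~(2). For the dual complex $K^{*}_{g,1}(n)$, the combined closure is contractible and sits inside a single isometric Euclidean chart, so $d_{\infty}(z_{1},z_{2})=\max\{|\hat{x}_{2}-\hat{x}_{1}|,\,|\hat{y}_{2}-\hat{y}_{1}|\}$ and conditions (1) and (2) exhaust the possibilities.

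For $K_{g,0}(n)$ the same argument yields (1) and (2) whenever the admissible pair avoids the singular point $p$. When $\overline{\sigma_{1}}$ and $\overline{\sigma_{2}}$ meet at $p$, however, one may choose a lift of $\overline{\sigma_{1}}\cup\overline{\sigma_{2}}$ in which the two cells project to the same side of the origin; the corresponding barycenters then satisfy $\mathrm{bary}_{x}(\hat{\sigma}_{1})=\mathrm{bary}_{x}(\hat{\sigma}_{2})$ or its vertical analogue. In such a lift, a distance-realizing path between $z_{1}$ and $z_{2}$ must pass through $p$, and its horizontal (respectively vertical) length is $|\hat{x}_{1}+\hat{x}_{2}|$ (respectively $|\hat{y}_{1}+\hat{y}_{2}|$), the \emph{sum} of the two coordinates, because traversing the cone point flips the sign of the measurement. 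The constraints $|\hat{x}_{i}|,|\hat{y}_{i}|<1$ ensure that this through-$p$ path stays inside a chart on which $d_{\infty}$ is genuinely realized by that single horizontal or vertical sweep, and imposing $d_{\infty}\ge 1$ then becomes precisely condition~(3) or (4).

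The main obstacle is confirming that the list is exhaustive for $K_{g,0}(n)$ and insensitive to the particular lift produced by Proposition \ref{well-defined lift}. I would handle this by enumerating the finitely many admissible pairs in Figure \ref{fig:cellsofdistancezero} that can meet at $p$ and, for each pair, inspecting the finitely many lifts of $\overline{\sigma_{1}}\cup\overline{\sigma_{2}}$ to $\tilde{K}_{g,0}(n)$ modulo the deck transformations of $\pi$. Any straight segment realizing $d_{\infty}$ between points of $\overline{\sigma_{1}}$ and $\overline{\sigma_{2}}$ either lies inside a single Euclidean sheet (producing condition~(1) or~(2)) or passes through the origin (producing condition~(3) or~(4)); since deck transformations permute the listed inequalities among themselves, the truth of ``at least one of (1)--(4) holds'' is independent of which lift was chosen.
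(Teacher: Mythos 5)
Your overall strategy matches the paper's: lift $\overline{\sigma_{1}}\cup\overline{\sigma_{2}}$ into $\tilde{K}(n)$ via Proposition \ref{well-defined lift}, project into $\R^{2}$, and translate $d_{\infty}(z_{1},z_{2})\ge 1$ into inequalities on $\hat{x}_{i},\hat{y}_{i}$. The decomposition $\hat{x}_{2}-\hat{x}_{1}=\bigl[\text{bary}_{x}(\hat{\sigma}_{2})-\text{bary}_{x}(\hat{\sigma}_{1})\bigr]+\bigl[\hat{x}_{2}-\text{bary}_{x}(\hat{\sigma}_{2})\bigr]-\bigl[\hat{x}_{1}-\text{bary}_{x}(\hat{\sigma}_{1})\bigr]$ is a nice way to see conditions (1) and (2), and your treatment of $K^{*}_{g,1}(n)$, where the projection is genuinely an isometry, is fine.

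The gap is in the handling of pairs of cells meeting at $p$ in $K_{g,0}(n)$, and it appears in two places. First, the assertion that ``one may choose a lift of $\overline{\sigma_{1}}\cup\overline{\sigma_{2}}$ in which the two cells project to the same side of the origin'' is not available to you: since $p\in\overline{\sigma_{1}}\cup\overline{\sigma_{2}}$, the section from Lemma \ref{section of small set no boundary} is forced to send $p$ to the origin, and the only remaining freedom is a deck transformation of $\pi$ (a full $2\pi$ rotation of $\tilde{K}_{g,0}(n)$), which leaves the projection to $\R^{2}$ of $\overline{\hat{\sigma}_{1}}\cup\overline{\hat{\sigma}_{2}}$ unchanged. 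The relative position of the two cells around the cone point is a fixed geometric datum, not something you get to arrange. In particular it is perfectly possible for a pair meeting at $p$ to project to genuinely diagonal quadrants in $\R^{2}$, in which case $\vert\hat{x}_{2}-\hat{x}_{1}\vert$ and $\vert\hat{y}_{2}-\hat{y}_{1}\vert$ are the true horizontal and vertical distances and conditions (1)--(2), not (3)--(4), apply. Second, and for the same reason, the claim that Proposition \ref{barycenter in square} forces $\vert\text{bary}_{x}(\hat{\sigma}_{2})-\text{bary}_{x}(\hat{\sigma}_{1})\vert=1$ or $\vert\text{bary}_{y}(\hat{\sigma}_{2})-\text{bary}_{y}(\hat{\sigma}_{1})\vert=1$ is false: when $\sigma_{1}$ and $\sigma_{2}$ project to the same quadrant of $S(n)$ (so $\hat{\sigma}_{1}=\hat{\sigma}_{2}$), both projected barycenter differences vanish, even though the Chebyshev distance between the true barycenters in $K_{g,0}(n)$ is exactly $1$. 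The projection to $\R^{2}$ is an isometry only on each individual $\overline{\sigma_{i}}$, not on the union.

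What is actually needed, and what the paper carries out, is a case-by-case enumeration determined by the geometry: both $1$-cells, one $1$-cell and one $2$-cell, both $2$-cells, and within each of those, whether the projection happens to preserve the horizontal distance, the vertical distance, both, or neither between the two closures. Each combination dictates exactly which of the four conditions are relevant. Your final paragraph gestures at performing such an enumeration, but the body of your argument has already drawn conclusions based on the incorrect ``choose a convenient lift'' step, so as written the proof does not establish the proposition for $K_{g,0}(n)$ with $g\ge 2$.
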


\begin{proof}
We will see that the third and fourth conditions in the $\big(K_{g,0}(n)\big)^{2}$ case only pertain to cells $\Sigma=\sigma_{1}\times \sigma_{2}$ such that $p$, the point of non-zero curvature, is in $\overline{\sigma_{1}}\cap\overline{\sigma_{2}}$.
Since $K^{*}_{g,1}(n)$ is the subcomplex of $K_{g,0}(n)$ consisting of all cells $\sigma$ whose closures do not contain $p$, it follows that it suffices to prove this proposition in the case where $\Sigma$ is an open cell of $\big(K_{g,0}(n)\big)^{2}$.

The point $\textbf{z}=(z_{1}, z_{2})$ in $\Sigma=\sigma_{1}\times \sigma_{2}\subset \big(K_{g,0}(n)\big)^{2}$ is in $SF_{2}\big(K_{g,0}(n)\big)$ if and only if at least one of the horizontal and vertical distances between $z_{1}$ and $z_{2}$ is at least $1$.
Since $\Sigma=\sigma_{1}\times \sigma_{2}$ is only partially contained in $SF_{2}\big(K_{g,0}(n)\big)$, it must be the case that $\overline{\sigma_{1}}\cap \overline{\sigma_{2}}\neq \emptyset$, and by Proposition \ref{well-defined lift} there is a lift $\overline{\tilde{\sigma}_{1}}\cup\overline{\tilde{\sigma}_{2}}$ of $\overline{\sigma_{1}}\cup\overline{\sigma_{2}}$ in $\tilde{K}_{g,0}(n)$.
This leads to a lift $\tilde{\textbf{z}}$ of $\textbf{z}$, and we can consider the image $\hat{\textbf{z}}$ of $\tilde{\textbf{z}}$ under the projection map from $\tilde{K}_{g,0}(n)$ to $S(n)$.
With this in mind, we wish to translate the condition that $d_{\infty}(z_{1}, z_{2})\ge 1$ into inequalities involving their images $\hat{z}_{1}$ and $\hat{z}_{2}$ in $\R^{2}$.

Note that the map from $\overline{\sigma_{1}}\cup\overline{\sigma_{2}}$ to $\overline{\tilde{\sigma}_{1}}\cup\overline{\tilde{\sigma}_{2}}$ is an isometry, preserving both the horizontal and vertical distances between pairs of points.
Moreover, if $0\notin\overline{\tilde{\sigma}_{1}}\cap\overline{\tilde{\sigma}_{2}}$, then  $\overline{\tilde{\sigma}_{1}}\cup\overline{\tilde{\sigma}_{2}}$ is horizontally and vertically isometric to $\overline{\hat{\sigma}_{1}}\cup\overline{\hat{\sigma}_{2}}$.
It follows that if $p\notin\overline{\sigma_{1}}\cap\overline{\sigma_{2}}$, then  $\overline{\sigma_{1}}\cup\overline{\sigma_{2}}$ is horizontally and vertically isometric to $\overline{\hat{\sigma}_{1}}\cup\overline{\hat{\sigma}_{2}}$, so $\hat{\sigma}_{1}\cup\hat{\sigma}_{2}$ is of one of the first four forms depicted in Figure \ref{fig:cellsofdistancezero}.
Therefore, in this case, if $\hat{\textbf{z}}=(\hat{z}_{1}, \hat{z}_{2})=(\hat{x}_{1}, \hat{y}_{1};\hat{x}_{2}, \hat{y}_{2})$ is the image of $\textbf{z}$ under this map, then $\textbf{z}$ is in $SF_{2}\big(K_{g,0}(n)\big)$ if and only if either $|\hat{x}_{2}-\hat{x}_{1}|\ge 1$ or $|\hat{y}_{2}-\hat{y}_{1}|\ge 1$.

If $p\in\overline{\sigma_{1}}\cap\overline{\sigma_{2}}$, then the map that sends $\overline{\tilde{\sigma}_{1}}\cup\overline{\tilde{\sigma}_{2}}$ to $\overline{\hat{\sigma}_{1}}\cap\overline{\hat{\sigma}_{2}}$ need not be an isometry, so the resulting map from $\overline{\tilde{\sigma}_{1}}\cup\overline{\tilde{\sigma}_{2}}$ to $\overline{\hat{\sigma}_{1}}\cap\overline{\hat{\sigma}_{2}}$ need not be an isometry, and at least one of the horizontal and vertical distances might be distorted.
That said, the restriction of this map to either $\overline{\sigma_{1}}$ or $\overline{\sigma_{2}}$ does preserve these distances.
We consider how this affects the induced maps from $\Sigma$ to $\R^{4}$.

If $\sigma_{1}$ and $\sigma_{2}$ are both $1$-cells, then, since $\Sigma$ is partially contained in $SF_{2}\big(K_{g,0}(n)\big)$, they are parallel, i.e., both vertical or both horizontal, i.e., of the first form in Figure \ref{fig:cellsofdistancezero}.
In this case, $\hat{\sigma}_{1}\cup \hat{\sigma}_{2}$ consists of either two disjoint parallel $1$-cells or a single $1$-cell; ether way, we may assume without loss of generality that both lines are vertical.
If $\hat{\sigma}_{1}\cup \hat{\sigma}_{2}$ consists of two disjoint parallel $1$-cells, then $\overline{\sigma_{1}}\cup\overline{\sigma_{2}}$ is isometric to $\overline{\hat{\sigma}_{1}}\cup\overline{\hat{\sigma}_{2}}$ and $\hat{\textbf{z}}=(\hat{z}_{1}, \hat{z}_{2})=(\hat{x}_{1}, \hat{y}_{1};\hat{x}_{2}, \hat{y}_{2})=(0, \hat{y}_{1};0, \hat{y}_{2})$.
It follows that $\textbf{z}$ is in $SF_{2}\big(K_{g,0}(n)\big)$ if and only if $|\hat{y}_{2}-\hat{y}_{1}|\ge 1$. 
Otherwise, $\hat{\sigma}_{1}\cup \hat{\sigma}_{2}$ consists of a single $1$-cell, and $\textbf{z}$ is in $SF_{2}\big(K_{g,0}(n)\big)$ if and only if $\hat{\textbf{z}}=(\hat{z}_{1}, \hat{z}_{2})=(\hat{x}_{1}, \hat{y}_{1};\hat{x}_{2}, \hat{y}_{2})=(0, \hat{y}_{1};0, \hat{y}_{2})$ is such that $|\hat{y}_{2}+\hat{y}_{1}|\ge 1$. 

If $\sigma_{1}$ is a $1$-cell and $\sigma_{2}$ is a $2$-cell, then there are two possible cases for the image $\overline{\hat{\sigma}_{1}}\cup\overline{\hat{\sigma}_{2}}$ of $\overline{\sigma_{1}}\cup\overline{\sigma_{2}}$ in $\R^{2}$.
In the first case, $\overline{\hat{\sigma}_{1}}\cup\overline{\hat{\sigma}_{2}}$ is of the second form in Figure \ref{fig:cellsofdistancezero}.
It follows that $\overline{\hat{\sigma}_{1}}\cup\overline{\hat{\sigma}_{2}}$ and $\overline{\sigma_{1}}\cup\overline{\sigma_{2}}$ are isometric, so $\hat{\textbf{z}}=(\hat{z}_{1}, \hat{z}_{2})=(\hat{x}_{1}, \hat{y}_{1};\hat{x}_{2}, \hat{y}_{2})=(0, \hat{y}_{1};\hat{x}_{2}, \hat{y}_{2})$.
Therefore, $\textbf{z}$ is in $SF_{2}\big(K_{g,0}(n)\big)$ if and only if $|\hat{y}_{2}-\hat{y}_{1}|\ge 1$. 
In the other case, $\overline{\hat{\sigma}_{1}}\cup\overline{\hat{\sigma}_{2}}$ is of the fifth form in Figure \ref{fig:cellsofdistancezero}, and $\textbf{z}$ is in $SF_{2}\big(K_{g,0}(n)\big)$ if and only if $\hat{\textbf{z}}=(\hat{z}_{1}, \hat{z}_{2})=(\hat{x}_{1}, \hat{y}_{1};\hat{x}_{2}, \hat{y}_{2})=(0, \hat{y}_{1};\hat{x}_{2}, \hat{y}_{2})$ is such that $|\hat{y}_{2}+\hat{y}_{1}|\ge 1$, i.e., the fourth condition holds since the origin is contained in $\overline{\hat{\sigma}_{1}}\cap\overline{\hat{\sigma}_{2}}$.

Finally, we need to consider the case in which $\sigma_{1}$ and $\sigma_{2}$ are both $2$-cells.
If $\sigma_{1}$ and $\sigma_{2}$ are of the third form depicted in Figure \ref{fig:cellsofdistancezero} (up to rotation), then the map sending $\overline{\sigma_{1}}\cup\overline{\sigma_{2}}$ to $\overline{\hat{\sigma}_{1}}\cup\overline{\hat{\sigma}_{2}}$ is an isometry.
Then, without loss of generality, a configuration $\textbf{z}$ with a point in each of $\sigma_{1}$ and $\sigma_{2}$ is in $SF_{2}\big(K_{g,0}(n)\big)$ if and only if $\hat{\textbf{z}}=(\hat{z}_{1}, \hat{z}_{2})=(\hat{x}_{1}, \hat{y}_{1};\hat{x}_{2}, \hat{y}_{2})$ is such that $|\hat{x}_{2}-\hat{x}_{1}|\ge 1$ as the vertical distance between $\hat{z}_{1}$ and $\hat{z}_{2}$ is less than $1$.
Otherwise, $\sigma_{1}$ and $\sigma_{2}$ are of the fourth form depicted in Figure \ref{fig:cellsofdistancezero}.
In this case, the map from $\overline{\sigma_{1}}\cup\overline{\sigma_{2}}$ to $\overline{\hat{\sigma}_{1}}\cup\overline{\hat{\sigma}_{2}}$ need not be a horizontal or vertical isometry, though it might be.
If it is, then $\textbf{z}\in SF_{2}\big(K_{g,0}(n)\big)$ if and only if $\hat{\textbf{z}}=(\hat{z}_{1}, \hat{z}_{2})=(\hat{x}_{1}, \hat{y}_{1};\hat{x}_{2}, \hat{y}_{2})$ is such that at least one of  $|\hat{x}_{2}-\hat{x}_{1}|\ge 1$ and $|\hat{y}_{2}-\hat{y}_{1}|\ge 1$, i.e., one of the first two conditions hold.
If the map from $\overline{\sigma_{1}}\cup\overline{\sigma_{2}}$ to $\overline{\hat{\sigma}_{1}}\cup\overline{\hat{\sigma}_{2}}$ maps  $\overline{\sigma_{1}}\cup\overline{\sigma_{2}}$ to the third form in Figure \ref{fig:cellsofdistancezero}, then $\textbf{z}\in SF_{2}\big(K_{g,0}(n)\big)$ if and only if $\hat{\textbf{z}}=(\hat{z}_{1}, \hat{z}_{2})=(\hat{x}_{1}, \hat{y}_{1};\hat{x}_{2}, \hat{y}_{2})$ is such that at least one of  $|\hat{x}_{2}+\hat{x}_{1}|\ge 1$ and $|\hat{y}_{2}-\hat{y}_{1}|\ge 1$, as the horizontal distance between $z_{1}$ and $z_{2}$ is $|\hat{x}_{2}|+|\hat{x}_{1}|=|\hat{x}_{2}+\hat{x}_{1}|$ and the vertical distance between them is $|\hat{y}_{2}-\hat{y}_{1}|$.
Finally, we must consider the case in which $\overline{\sigma_{1}}\cup\overline{\sigma_{2}}$ is sent to a single $2$-cell.
In this case, $\textbf{z}$ is in $SF_{2}\big(K_{g,0}(n)\big)$ if and only if $\hat{\textbf{z}}=(\hat{z}_{1}, \hat{z}_{2})=(\hat{x}_{1}, \hat{y}_{1};\hat{x}_{2}, \hat{y}_{2})$ is such that at least one of  $|\hat{x}_{2}+\hat{x}_{1}|\ge 1$ and $|\hat{y}_{2}+\hat{y}_{1}|\ge 1$ as the vertical distance between $z_{1}$ and $z_{2}$ is $|\hat{y}_{2}|+|\hat{y}_{1}|=|\hat{y}_{2}+\hat{y}_{1}|$ and the horizontal distance between them is $|\hat{x}_{2}|+|\hat{x}_{1}|=|\hat{x}_{2}+\hat{x}_{1}|$. 

It remains to check that these inequalities correspond to the four conditions in the proposition statement.
Note that the function $\text{bary}_{x}:\R^{2} \to \R$ that sends $\hat{z}$ to the $x$-coordinate of the barycenter of the open cell $\hat{\sigma}$ in $\R^{2}$ containing it is weakly-order preserving, i.e., if $\hat{z}_{1}=(\hat{x}_{1}, \hat{y}_{1})$ and $\hat{z}_{2}=(\hat{x}_{2}, \hat{y}_{2})$ are such that $\hat{x}_{1}>\hat{x}_{2}$, then $\text{bary}_{x}(\hat{z}_{1})\ge \text{bary}_{x}(\hat{z}_{2})$.
By the symmetry of $\hat{x}_{1}$ and $\hat{x}_{2}$ as well as $(\hat{x}_{1}, \hat{x}_{2})$ and $(\hat{y}_{1}, \hat{y}_{2})$, it suffices to note that $|\hat{x}_{2}-\hat{x}_{1}|\ge 1$ if and only if both $\text{bary}_{x}(\hat{\sigma}_{2})-\text{bary}_{x}(\hat{\sigma}_{2})=1$ and $\big(\hat{x}_{2}-\text{bary}_{x}(\hat{\sigma}_{2})\big)-\big(\hat{x}_{1}-\text{bary}_{x}(\hat{\sigma}_{1})\big)\ge 0$.
Similarly, we only get the inequality of the form $|\hat{x}_{2}+\hat{x}_{2}|\ge 1$ if $p\in \overline{\sigma_{1}}\cap\overline{\sigma_{2}}$, in which case $0\in \overline{\hat{\sigma}_{1}}\cap\overline{\hat{\sigma}_{2}}$, i.e., $|\hat{x}_{1}|, |\hat{x}_{2}|, |\hat{y}_{1}|, |\hat{y}_{2}|<1$.
Moreover, in this case, this inequality arises if and only if $|\text{bary}_{x}(\hat{\sigma}_{2})-\text{bary}_{x}(\hat{\sigma}_{1})|=0$, completing the cataloging of conditions.
\end{proof}

Note that if we write $\hat{z}=(\hat{x}, \hat{y})$ in terms of local coordinates $(\hat{u}, \hat{v}):=\big(\hat{x}-\text{bary}_{x}(\hat{\sigma}),\hat{y}-\text{bary}_{y}(\hat{\sigma})\big)$, the first condition in Proposition \ref{local coordinate check boundary} translates to an inequality of the form $\hat{u}_{1}\ge \hat{u}_{2}$ or $\hat{u}_{1}\le \hat{u}_{2}$, and a similar statement holds for the second condition.

We use these inequalities to find a deformation retract of the parts of $\Sigma$ contained in $SF_{m}\big(K(n)\big)$ onto the intersection of the boundary of $\Sigma$ with $SF_{m}\big(K(n)\big)$.
Doing so in sequence for all partially contained cells will yield a deformation retract of $SF_{m}\big(K(n)\big)$ onto our discrete configuration space.

\begin{lem}\label{retract partially contained cell no boundary}
For $m\le n$, let $\Sigma=\sigma_{1}\times \cdots\times \sigma_{m}$ be an open cell in $\big(K_{g,0}(n)\big)^{m}$, resp. $\big(K^{*}_{g,1}(n)\big)^{m}$, that is partially contained in $SF_{m}\big(K_{g,0}(n)\big)$, resp. $SF_{m}\big(K_{g,1}(n)\big)$.
Then, $\overline{\Sigma}\cap SF_{m}\big(K_{g,0}(n)\big)$, resp. $\overline{\Sigma}\cap SF_{m}\big(K_{g,1}(n)\big)$, deformation retracts onto $\partial\overline{\Sigma}\cap SF_{m}\big(K_{g,0}(n)\big)$, resp. $\partial\overline{\Sigma}\cap SF_{m}\big(K_{g,1}(n)\big)$.
\end{lem}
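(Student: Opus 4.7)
The plan is to construct an explicit deformation retraction of $\overline{\Sigma}\cap SF_{m}(K(n))$ onto $\partial\overline{\Sigma}\cap SF_{m}(K(n))$ by exploiting the linear inequality description of $SF_{m}$ inside $\overline{\Sigma}$ given by Proposition \ref{local coordinate check boundary}. The key observation is that $\overline{\Sigma}\cap SF_{m}(K(n))$ is cut out of the cube $\overline{\Sigma}$ by a conjunction, ranging over the ``tight'' pairs $(i,j)$ with $\overline{\sigma_{i}}\cap\overline{\sigma_{j}}\ne\emptyset$, of disjunctions of linear inequalities, and when written in the local coordinates $(\hat u_{i},\hat v_{i})=(\hat x_{i}-\text{bary}_{x}(\hat\sigma_{i}),\hat y_{i}-\text{bary}_{y}(\hat\sigma_{i}))$ each such inequality takes the form $\ell_{k}\ge 0$ for a linear functional $\ell_{k}$ that vanishes at the barycenter $\textbf{m}$ of $\Sigma$.

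The main step is to show that $\overline{\Sigma}\cap SF_{m}(K(n))$ is star-shaped with respect to $\textbf{m}$, which by Proposition \ref{barycenter in square} lies in $SF_{m}(K(n))$. Given any $\textbf{z}=\textbf{m}+s\vec d$ in $\overline{\Sigma}\cap SF_{m}(K(n))$ with $s>0$, for each tight pair $(i,j)$ at least one $\ell_{k}$ in the corresponding disjunction satisfies $\ell_{k}(\textbf{z})\ge 0$; linearity together with $\ell_{k}(\textbf{m})=0$ then gives $\vec d\cdot\nabla\ell_{k}\ge 0$ and therefore $\ell_{k}(\textbf{m}+t\vec d)\ge 0$ for every $t\ge 0$. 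Thus the ray from $\textbf{m}$ through $\textbf{z}$ stays in $SF_{m}(K(n))$ until it exits $\overline{\Sigma}$. This argument must be verified uniformly over the four forms of Figure \ref{fig:cellsofdistancezero} and across all four conditions of Proposition \ref{local coordinate check boundary}, in particular conditions~(3) and (4) involving the cone point $p\in K_{g,0}(n)$, where sums rather than differences of local coordinates appear; the linearity argument nonetheless applies in the same way.

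With star-shape in hand, $\overline{\Sigma}\cap SF_{m}(K(n))$ is homeomorphic to the cone on $\partial\overline{\Sigma}\cap SF_{m}(K(n))$ with apex $\textbf{m}$. For $\textbf{z}\ne\textbf{m}$, I define $r(\textbf{z})$ to be the unique point of the form $\textbf{m}+T(\textbf{z}-\textbf{m})$ with $T\ge 1$ lying in $\partial\overline{\Sigma}$, and the straight-line homotopy $H(\textbf{z},t)=(1-t)\textbf{z}+t\,r(\textbf{z})$ deformation retracts $(\overline{\Sigma}\cap SF_{m}(K(n)))\setminus\{\textbf{m}\}$ onto $\partial\overline{\Sigma}\cap SF_{m}(K(n))$, while being the identity on the latter since any $\textbf{z}\in\partial\overline{\Sigma}\cap SF_{m}(K(n))$ already satisfies $T=1$.

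The main obstacle will be extending $H$ continuously through the apex $\textbf{m}$. This is equivalent to showing that $\partial\overline{\Sigma}\cap SF_{m}(K(n))$ is contractible, so that the inclusion $\partial\overline{\Sigma}\cap SF_{m}(K(n))\hookrightarrow\overline{\Sigma}\cap SF_{m}(K(n))$ is a homotopy equivalence between CW spaces and hence, being a cofibration, yields a genuine deformation retract. I expect to establish contractibility by decomposing $\partial\overline{\Sigma}\cap SF_{m}(K(n))$ into the fully-contained facets of $\overline{\Sigma}$ together with the $SF_{m}$-portions of partially-contained facets, and then applying an inductive Mayer--Vietoris-style argument: each pair of overlapping pieces has contractible intersection thanks to the linear structure of the defining inequalities combined with the inductive hypothesis applied to lower-dimensional faces.
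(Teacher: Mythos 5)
Your structural analysis is correct: $\overline{\Sigma}\cap SF_{m}$ is cut out of the cube by a conjunction of disjunctions of linear inequalities, and in the local coordinates of Proposition \ref{local coordinate check boundary} each such inequality is tight at the barycenter $\textbf{m}$. The star-shapedness computation that follows from this is also essentially right. However, as you yourself flag, the radial retraction from $\textbf{m}$ fails to be continuous at $\textbf{m}$ precisely because $\textbf{m}$ lies \emph{inside} $\overline{\Sigma}\cap SF_{m}$ (it is a configuration in $SF_{m}$ by Proposition \ref{barycenter in square}, and it is interior to $\Sigma$), so rays emanating from $\textbf{m}$ in different directions exit at faces that do not converge. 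Your proposed repair---prove $\partial\overline{\Sigma}\cap SF_{m}$ is contractible and then invoke the cofibration property---is not carried out, and is circular in spirit: establishing that contractibility by a Mayer--Vietoris induction over faces is essentially as hard as (and in fact an almost immediate consequence of) the very lemma you are trying to prove, so this route does not obviously terminate.

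The paper sidesteps the apex issue entirely by choosing a different retraction center. It constructs a point $\textbf{m}-\textbf{b}$ (by flipping the sign of and rescaling the non-integer coordinates of $\hat{\textbf{m}}$) that lies in the open cell $\Sigma$ but strictly \emph{violates} every inequality associated to $\Sigma$; hence $\textbf{m}-\textbf{b}\notin SF_{m}$, so it is not in the set being retracted and the radial map $\hat{\textbf{z}}\mapsto\hat{\textbf{z}}+t(\hat{\textbf{z}}-\hat{\textbf{m}}+\hat{\textbf{b}})$ is continuous on all of $\overline{\Sigma}\cap SF_{m}$. The linearity argument you use for star-shapedness is then run in reverse: since $\hat{\textbf{b}}$ strictly satisfies the homogeneous inequalities while $-\hat{\textbf{b}}$ violates them, pushing away from $\hat{\textbf{m}}-\hat{\textbf{b}}$ keeps every satisfied disjunct satisfied. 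The idea you are missing is that the center of the radial flow should be a ``maximally bad'' point inside the cell but outside $SF_{m}$, not the barycenter; with that change your computation goes through and the continuity problem evaporates.
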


\begin{proof}
Note that as $K^{*}_{g,1}(n)$ is the subcomplex of $K_{g,0}(n)$ consisting of all cells $\sigma$ whose closure does not contain $p$, the point of non-zero curvature, it suffices to prove this lemma in the $SF_{m}\big(K_{g,0}(n)\big)$ setting.

Let $\textbf{z}$ be a point in $\Sigma$.
Proposition \ref{local coordinate check boundary} gives conditions that can be used to determine whether $\textbf{z}$ is in $SF_{m}\big(K_{g,0}(n)\big)$.
Namely, if $\tilde{\textbf{z}}$ is a lift of $\textbf{z}$ in $\big(\tilde{K}_{g,0}(n)\big)^{m}$ arising from the lifts of the connected components of $\bigcup_{i=1}^{m}\overline{\sigma_{i}}$ to $\tilde{K}^{*}_{g,0}(n)$ guaranteed by Proposition \ref{well-defined lift}, and $\hat{\textbf{z}}$ is the projection of $\tilde{\textbf{z}}$ in $\R^{2m}$ arising from the natural projection of $\tilde{K}_{g,0}(n)$ to $\R^{2}$, then Proposition \ref{local coordinate check boundary} can be interpreted as giving a set of inequalities on the (local) coordinates of $\hat{\textbf{z}}=(\hat{x}_{1}, \hat{y}_{1};\dots;\hat{x}_{m}, \hat{y}_{m})$ that we can use to determine if $\textbf{z}$ is in $SF_{m}\big(K_{g,0}(n)\big)$.
That is, write $\hat{\textbf{z}}-\text{bary}(\hat{\textbf{z}})=(\hat{u}_{1}, \hat{v}_{1};\dots;\hat{u}_{m}, \hat{v}_{m})\subset \big(-\frac{1}{2},\frac{1}{2}\big)^{2m}$ for the local coordinates of $\hat{\textbf{z}}$ within the cell $\hat{\Sigma}$.
Namely, for each pair $z_{k},z_{l}$ of points in $\textbf{z}$, Proposition \ref{local coordinate check boundary} specifies zero, one, or two inequalities of the following forms:
\begin{enumerate}
    \item $\hat{u}_{k}\ge \hat{u}_{l}$ or $\hat{u}_{k}\le \hat{u}_{l}$;
    \item $\hat{v}_{k}\ge \hat{v}_{l}$ or $\hat{v}_{k}\le \hat{v}_{l}$;
    \item $|\hat{x}_{k}+\hat{x}_{l}|\ge 1$;
    \item $|\hat{y}_{k}+\hat{y}_{l}|\ge 1$.
\end{enumerate}

There are zero inequalities if $\overline{\sigma_{k}}$ and $\overline{\sigma_{l}}$ do not intersect.
There is an inequality of type 1, resp. 2, if $\overline{\sigma_{k}}$ and $\overline{\sigma_{l}}$ are of any of the first, second, or third forms seen in Figure \ref{fig:cellsofdistancezero} (up to rotation), and the map from $\bigcup_{i=1}^{m}\overline{\sigma_{i}}$ to $\bigcup_{i=1}^{m}\overline{\hat{\sigma}_{i}}$ preserves horizontal, resp. vertical, distances between points in $\overline{\sigma_{k}}$ and $\overline{\sigma_{l}}$.
There is one inequality of type 3, resp. 4, if $p\in \overline{\sigma_{k}}\cap \overline{\sigma_{l}}$ and $\overline{\sigma_{k}}$ and $\overline{\sigma_{l}}$ are of any of the first, second, or third forms seen in Figure \ref{fig:cellsofdistancezero} (up to rotation), and the map from $\bigcup_{i=1}^{m}\overline{\sigma_{i}}$ to $\bigcup_{i=1}^{m}\overline{\hat{\sigma}_{i}}$ does not preserve horizontal, resp. vertical, distances between $\overline{\sigma_{k}}$ and $\overline{\sigma_{l}}$, in which case $p$ must be in $\overline{\sigma_{k}}\cap\overline{\sigma_{l}}$.
There are two inequalities of the first two types if $\overline{\sigma_{k}}$ and $\overline{\sigma_{l}}$ are of the fourth form in Figure \ref{fig:cellsofdistancezero} and the map from $\bigcup_{i=1}^{m}\overline{\sigma_{i}}$ to $\bigcup_{i=1}^{m}\overline{\hat{\sigma}_{i}}$ preserves horizontal and vertical distances between points in $\overline{\sigma_{k}}$ and $\overline{\sigma_{l}}$.
If this map only preserves horizontal, resp. vertical, distance then there are two inequalities, one of type 1, resp. 2,  and one of type 4, resp. 3.
If this map preserves neither horizontal nor vertical distance then there are there are two inequalities; namely, types 3 and 4.
We call these the inequalities associated to $\Sigma$.

By Proposition \ref{barycenter in square}, the barycenter $\textbf{m}$ of $\Sigma\subset \big(K_{g,0}(n)\big)^{m} $ is in $SF_{m}\big(K_{g,0}(n)\big)$, so $\hat{\textbf{m}}-\text{bary}(\hat{\textbf{m}})$ must satisfy the inequalities of the first two types associated to $\Sigma$, and $\hat{\textbf{m}}$ must satisfy the inequalities of the second two types.
What's more, $\hat{\textbf{m}}$ must satisfy the all inequalities, those of the first two strictly.
We will use this fact to define our deformation retract.

Let $\hat{\textbf{m}}'$ denote the image of $\hat{\textbf{m}}$ induced by the map $h:\R\to \R$ that sends integers to $0$ and is the identify elsewhere.
We have that $\hat{\textbf{m}}'$ records the dimension of $\Sigma$, as the integer coordinates correspond to cells $\sigma_{i}$ that have $0$ width or height.
Let $\lambda$ be a large enough positive number such that $\frac{1}{\lambda}\hat{\textbf{m}'}$ is in $\big(-\frac{1}{2},\frac{1}{2}\big)^{2m}$, and set this to be $\hat{\textbf{b}}$.
Since $\hat{\textbf{m}}$ strictly satisfies all the inequalities of the first two types associated to $\Sigma$, so does $\hat{\textbf{m}}'$, as the integer coordinates of $\hat{\textbf{m}}'$ cannot be involved in these linear inequalities.
Since $\hat{\textbf{b}}$ is a scaled version of $\hat{\textbf{m}}'$, it must be the case that $-\hat{\textbf{b}}$ violates all the strict inequalities associated to $\Sigma$, those of the first two types because of the minus sign, those of the second two types because of the scaling.
It follows that the point $\hat{\textbf{m}}+\hat{\textbf{b}}\in \hat{\Sigma}$ is a point of $\Sigma$ in $SF_{m}\big(K_{g,0}(n)\big)$, and the point $\hat{\textbf{m}}-\hat{\textbf{b}}\in \hat{\Sigma}$ violates all the inequalities associated to $\Sigma$.
Moving away from this maximally bad point will yield the deformation retraction.

Let $\textbf{m}-\textbf{b}$ denote the point in $\Sigma$ corresponding to $\hat{\textbf{m}}-\hat{\textbf{b}}$.
We claim that the map from $\overline{\Sigma}\cap SF_{m}\big(K_{g,0}(n)\big)$ to $\partial\overline{\Sigma}\cap SF_{m}\big(K_{g,0}(n)\big)$ that pushes every point $\textbf{z}\in \overline{\Sigma}\cap\partial SF_{m}\big(K_{g,0}(n)\big)$ outward along a ray from $\textbf{m}-\textbf{b}$ to until it hits $\partial\overline{\Sigma}$ is a deformation retraction.
To see this, note that the vector from $\hat{\textbf{m}}-\hat{\textbf{b}}$ to $\hat{\textbf{z}}$ is given by $\hat{\textbf{z}}-\hat{\textbf{m}}+\hat{\textbf{b}}$.
It follows that if we move from $\hat{\textbf{z}}$ along this vector for time $t$ we get the point 
\[
\hat{\textbf{z}}_{t}=\hat{\textbf{z}}+t(\hat{\textbf{z}}-\hat{\textbf{m}}+\hat{\textbf{b}}).
\]
This gives us a path in $\overline{\hat{\Sigma}}$ that continuously varies in $\textbf{z}$.
Starting at $\hat{\textbf{z}}$ move along this path until $\hat{\textbf{z}}_{t}\in \partial \hat{\Sigma}$, and then stay put.
It follows that if $\textbf{z}_{t}$ is the point in $SF_{m}\big(K_{g,0}(n)\big)$ corresponding to $\hat{\textbf{z}}_{t}$, this path is contained in $\overline{\Sigma}$, and $\textbf{z}_{t}\in \partial \Sigma$ if and only if $\hat{\textbf{z}}_{t}\in \partial \hat{\Sigma}$.

If we write $T_{\textbf{z}}$ to denote the time such that $\hat{\textbf{z}}_{t}$ is first in $\partial \hat{\Sigma}$, it remains to show that for all $0\le t\le T_{\textbf{z}}$ that $\textbf{z}_{t}\in SF_{m}\big(K_{g,0}(n)\big)$.
By Proposition \ref{local coordinate check boundary}, it suffices to show that the coordinates of $\hat{\textbf{z}}_{t}$ satisfy all the inequalities associated to $\Sigma$.
For the first two types of inequalities, this would follow if
\[
\hat{\textbf{z}}_{t}-\hat{\textbf{m}}=(\hat{\textbf{z}}-\hat{\textbf{m}})+t(\hat{\textbf{z}}-\hat{\textbf{m}}+\hat{\textbf{b}})=(1+t)(\hat{\textbf{z}}-\hat{\textbf{m}})+t\hat{\textbf{b}}
\]
satisfies these inequalities, that is, for the first two types of inequalities it suffices to check that they are satisfied by the local coordinates of $\hat{\textbf{z}}_{t}$.
This is equivalent to showing that $\hat{\textbf{z}}-\hat{\textbf{m}}$ satisfies these inequalities as $t\hat{\textbf{b}}$ must by construction.
This is true, as $\hat{\textbf{z}}-\hat{\textbf{m}}$ are the local coordinates of $\hat{\textbf{z}}$ in $\Sigma$, and this corresponds to a point in $SF_{m}\big(K_{g,0}(n)\big)$ by assumption.
Since these two types of inequalities are linear with no constant terms, we see that if two points in $\R^{2m}$, in our case $\hat{\textbf{z}}-\hat{\textbf{m}}$ and $\hat{\textbf{b}}$, satisfy the first two types of inequalities, then any linear combination of them with positive coefficients will satisfy them as well.

For the inequalities of the second two types, without loss of generality it suffices to consider the inequality of the form $|\hat{x}_{k}+\hat{x}_{l}|\ge 1$.
In this case, the $x$-coordinates of $\hat{m}_{k}$ and $\hat{m}_{l}$ are equal to each other and are equal to $\frac{1}{2}$ (up to sign).
Additionally, the two $2$-vectors of the $x$-coordinates of $\hat{m}_{k}$ and $\hat{m}_{l}$ and of $\hat{b}_{k}$ and $\hat{b}_{l}$ point in the same direction as $(\hat{x}_{k}, \hat{x}_{l})$, which must lie in the first or third quadrant.
It follows that
\begin{align*}
|\hat{x}_{k,t}+\hat{x}_{l, t}|&=\big|\hat{x}_{k}+\hat{x}_{l}+t(\hat{x}_{k}+\hat{x}_{l}-\hat{m}_{x, k}-\hat{m}_{x, l}+\hat{b}_{x, k}+\hat{b}_{x, l})\big|\\
&= |\hat{x}_{k}+\hat{x}_{l}|+t|\hat{x}_{k}+\hat{x}_{l}-\hat{m}_{x, k}-\hat{m}_{x, l}|+t|\hat{b}_{x, k}+\hat{b}_{x, l}|\\
&\ge 1,
\end{align*}
as desired. 
Therefore, $\textbf{z}_{t}$ is in $\overline{\Sigma}\cap SF_{m}\big(K_{g,0}(n)\big)$.

Since the map $(\hat{\textbf{z}}, t)\mapsto \hat{\textbf{z}}_{t}$ is continuous in $\textbf{z}$ and $t$, it induces a deformation retraction of $\overline{\Sigma}\cap SF_{m}\big(K_{g,0}(n)\big)$ onto $\partial\overline{\Sigma}\cap SF_{m}\big(K_{g,0}(n)\big)$.
\end{proof}

Composing the deformation retractions given by the previous lemma produces a deformation retraction of $SF_{m}\big(K(n)\big)$ onto a discrete configuration space.

\begin{T2}
  \thmtextone
\end{T2} 

\begin{proof}
The proofs are identical, so we only prove the first statement.

Order the cells $\Sigma$ of $\big(K_{g,0}(n)\big)^{m}$ that are partially contained in $SF_{m}\big(K_{g,0}(n)\big)$ such that their dimensions are non-increasing.
Then, in order, perform the deformation retraction described in the proof of Lemma \ref{retract partially contained cell no boundary} to these cells.
Concatenating these retractions gives a deformation retraction from $SF_{m}\big(K_{g,0}(n)\big)$ onto $DF_{m}\big(K_{g,0}(n)\big)$.
\end{proof}

Theorem \ref{point is discrete} and Theorem \ref{point is square} combine to describe a discrete combinatorial cell structure for $F_{m}(\Sigma_{g,b})$.

\begin{T3}
  \thmtexttwo
\end{T3} 

We have found a cube complex that is homotopy equivalent to $F_{m}(\Sigma_{g,b})$.
In the next section, we discuss the optimality of our complex, and describe how our method of proof can be used to discretize other configuration spaces.

\section{Remarks and Future Directions}\label{remarks and future directions}

Having constructed a discrete model for $F_{m}(\Sigma_{g,b})$, we conclude this paper by making several remarks about the adaptability of our proof and the optimality of the complex.
We also suggest how this model might be used to study the (co)homology of $F_{m}(\Sigma_{g,b})$.

\subsection{Graph Configuration Spaces}

Recall that our inspiration for constructing a cubical model for surface configuration space comes from the graph setting.
Namely, we were inspired by the following theorem of Abrams:

\begin{T4}
  \thmabrams
\end{T4}

As mentioned in the introduction, Abrams was unable to give an explicit construction of the deformation retraction from $F_{n}(\Gamma)$ onto $DF_{n}(\Gamma)$ due to the nature of his proof, which is entirely different from our proof of Theorem \ref{point is discrete}.
We claim that our proofs of Theorems \ref{point is square} and \ref{square is discrete} can be adapted to the graph setting to construct an explicit deformation retraction.

To see this, note that one can consider the ordered configuration space of squares in a graph $\Gamma$ by defining the open $d$-square---really the open ball of radius $\frac{d}{2}$---centered at $z$ to be the set of points 
\[
\Big\{z'\in \Gamma|d(z, z')<\frac{1}{2}\Big\},
\]
where $d$ is the natural metric on $\Gamma$ that arises by making each open edge isomorphic to the interval $(0,1)$.
Given this notion of a square, we define the \emph{$m^{\text{th}}$-ordered configuration of open $d$-squares in $\Gamma$}, which we denote $SF_{m}(\Gamma)$, to be the space of $m$-tuples of points in $\Gamma$ that are all distance at least $d$ apart from each other, i.e.,
\[
SF_{m}(\Gamma,d):=\{(x_{1}, \dots, x_{m})\in \Gamma^{m}|d(x_{i}, x_{j})\ge d\text{ for all }i\neq j\}.
\]
Note, though $\Gamma$ might have a boundary, i.e., vertices of valence $1$, we do not consider the distance of a point in a configuration to the boundary in our definition of $SF_{m}(\Gamma)$---for graphs this is inconsequential.
As was the case for configurations in the surface $K_{g,b}(n)$, there is a retraction from $F_{m}(\Gamma)$ onto $SF_{m}(\Gamma, 1)$, provided essential vertices are far enough apart and there are no short loops.

\begin{lem}\label{point is square graphs}
Let $\Gamma$ be a connected graph, i.e., a $1$-dimensional cube complex, with at least $n$ vertices.
Then $F_{n}(\Gamma)$ deformation retracts onto $SF_{n}(\Gamma, 1)$ if
\begin{enumerate}
    \item each path connecting distinct essential vertices of $\Gamma$ has length at least $n+1$, and
    \item each homotopically essential path connecting a vertex to itself has length at least $n+1$.
\end{enumerate}
\end{lem}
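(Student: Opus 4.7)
The plan is to mimic the argument for Theorem \ref{point is square}, replacing the Chebyshev geometry of the surface with the natural path metric on $\Gamma$. Define the tautological function $\theta_{n}:\Gamma^{n}\to \R$ by
\[
\theta_{n}(\textbf{z}):=\tfrac{1}{2}\min_{i\neq j}d(z_{i},z_{j}),
\]
so that $\textbf{z}\in SF_{n}(\Gamma,d)$ if and only if $d\le 2\theta_{n}(\textbf{z})$. First I would check that $\theta_{n}$ is affine Morse--Bott on a suitable subdivision of $\Gamma^{n}$: on the product of two closed edges, $d(z_{i},z_{j})$ is piecewise affine with pieces determined by which combinatorial path in $\Gamma$ realizes the shortest distance, and refining further so that the minimizing particular metric function is constant on each cell makes $\theta_{n}$ affine Morse--Bott, as in Propositions \ref{distanct function between two points is affine Morse--Bott} and \ref{tautological function is affine Morse--Bott}.

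Next I would establish the graph analog of Lemma \ref{section of small set no boundary}. The Abrams conditions force any connected subgraph of $\Gamma$ of intrinsic diameter at most $n$ both to contain at most one essential vertex of $\Gamma$ and to lift isometrically to the universal cover $\tilde{\Gamma}$, which is a tree: any homotopically essential loop in the subgraph would have length at least $n+1$, and a pair of distinct essential vertices would give a path of length at least $n+1$. Given a configuration $\textbf{z}$ with $\theta_{n}(\textbf{z})=r<\tfrac{1}{2}$ and connected contact graph $\Gamma_{\textbf{z}}$, the longest non-backtracking path in $\Gamma_{\textbf{z}}$ has length at most $2(n-1)r<n$, so a small neighborhood of $\Gamma_{\textbf{z}}$ lifts to a subtree $\tilde{T}\subset \tilde{\Gamma}$ containing at most one essential vertex.

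If $\tilde{T}$ contains an essential vertex, root $\tilde{T}$ there; otherwise root at any chosen non-essential vertex, at which the tangent line of $\Gamma$ is unambiguous. Then define a radial expansion scaling distances from the root by a factor $t>1$ slightly larger than $1$ (fixing the root itself) and project back to $\Gamma$. By the tree property, this uniformly scales pairwise distances in $\tilde{T}$ by $t$ and in particular strictly increases $\theta_{n}$; it descends to a smooth map $\phi_{\textbf{z}}:U_{\textbf{z}}\to F_{n}(\Gamma)$ on a small neighborhood of $\textbf{z}$ by the same dot-product analysis as in Proposition \ref{move points away from boundary}. For configurations with disconnected contact graph I would carry this construction out on each connected component separately and patch the resulting vector fields together, as in Lemma \ref{increase tautological function}. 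Then, as in Lemma \ref{deformation retract for non crit values}, I would glue the local vector fields over any superlevel set $\theta_{n}^{-1}([r_{1},r_{2}])$ with $0<r_{1}<r_{2}<\tfrac{1}{2}$ via a partition of unity and integrate the resulting flow to retract $\theta_{n}^{-1}([r_{1},\infty))$ onto $\theta_{n}^{-1}([r_{2},\infty))$. Concatenating along a decreasing sequence $\epsilon_{i}\to 0$ as in the proof of Theorem \ref{point is square} yields the deformation retraction of $F_{n}(\Gamma)$ onto $SF_{n}(\Gamma,1)=\theta_{n}^{-1}([\tfrac{1}{2},\infty))$.

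The main obstacle is ensuring the radial expansion is well defined and smooth despite the branching behavior of $\Gamma$ at essential vertices, where the one-dimensional tangent space is replaced by a fan of incoming half-line directions. The crucial point is that the Abrams conditions guarantee at most one essential vertex of $\Gamma$ appears in each relevant subtree, and by rooting the expansion there, the displacement vanishes as a configuration point approaches that vertex. As in Remark \ref{no need to worry about tangent space}, I would stratify $F_{n}(\Gamma)$ by which coordinates of $\textbf{z}$ sit at essential vertices, declare the corresponding components of the vector field to vanish on each stratum, and verify that the flow preserves this stratification. With this in place, every step of the surface argument ports over.
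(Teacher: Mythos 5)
Your proposal follows essentially the same approach as the paper's sketch: treat essential vertices of $\Gamma$ as the analogue of the singular point $p\in K_{g,0}(n)$, lift a small neighborhood of the contact graph to a subtree of the universal cover using the Abrams length conditions (so that at most one essential vertex appears and the lift is isometric), build local radial-expansion maps rooted there with vanishing displacement at the root, patch them via a partition of unity as in Lemma \ref{deformation retract for non crit values}, and concatenate the resulting superlevel-set retractions as in the proof of Theorem \ref{point is square}. One minor slip worth flagging: radial expansion centered at a root $r$ in a tree does not uniformly scale pairwise distances by exactly $t$; for $z_{1},z_{2}$ whose geodesic does not pass through $r$, with $m$ the median of $\{r,z_{1},z_{2}\}$, the expanded distance equals $t\big(d(r,z_{1})+d(r,z_{2})\big)-2d(r,m)=t\,d(z_{1},z_{2})+2(t-1)d(r,m)\ge t\,d(z_{1},z_{2})$, with equality only when $m=r$. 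Since the factor is always at least $t$, your conclusion that $\theta_{n}$ strictly increases still holds, so this does not affect the argument.
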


To prove this lemma it suffices to take analogues of the expansion maps defined in Proposition \ref{move points away from boundary} and Lemma \ref{increase tautological function}, treating the essential vertices of $\Gamma$ like the singularity of $K_{g,0}(n)$.
Doing so gives a map $\phi_{\textbf{z}}$ that increases $\theta_{n}$ on a neighborhood of every configuration $\textbf{z}\in F_{n}(\Gamma)$ such that $\theta_{n}<\frac{n+1}{2n}$.
This map $\phi_{\textbf{z}}$ fixes points at the essential vertices of $\Gamma$, moves nearby points away from the essential vertices, and spreads out tightly bunched collections of points in a configuration.
Proceeding as in Lemma \ref{deformation retract for non crit values}, one can take a finite open cover of the subspace of configurations $\textbf{z}$ such that $\theta^{-1}_{n}\big([r_{1}, r_{2}]\big)$ for $0<r_{1}<r_{2}\le \frac{n+1}{2n}$.
Gluing together the corresponding $\phi_{\textbf{z}}$ via a partition of unity gives rise to a flow on this space that increases $\theta_{n}$ past $\frac{1}{2}<\frac{n+1}{2n}$, yielding the desired deformation retraction.

Alternatively, one can treat the points of a configuration $\textbf{z}\in F_{n}(\Gamma)$ like little balloons that we slowly inflate until they have diameter $1$.
The forces the inflating balloons exert upon each other spread them out, yielding a square configuration space.
Such a map can be formally obtained via the process described in the previous paragraph.
Moreover, one can check that this inflation process allows one to replace $n+1$ with $n-1$ in condition 1 of the lemma.

To complete our constructive proof of Abrams's theorem, it remains to prove that square configuration space deformation retracts onto discrete configuration space.
That is one must prove
\begin{lem}\label{square is discrete graph}
Let $\Gamma$ be a connected graph, i.e., a $1$-dimensional cube complex, with at least $n$ vertices.
Then $SF_{n}(\Gamma,1)$ deformation retracts onto $DF_{n}(\Gamma)$ if
\begin{enumerate}
    \item each path connecting distinct essential vertices of $\Gamma$ has length at least $n+1$, and
    \item each homotopically essential path connecting a vertex to itself has length at least $n+1$.
\end{enumerate}
\end{lem}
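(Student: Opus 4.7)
The plan is to mirror closely the proof of Theorem \ref{square is discrete}, treating each vertex of $\Gamma$ as the graph-theoretic analogue of the cone point $p$ in $K_{g,0}(n)$. First I would identify the cells $\Sigma = \sigma_{1}\times\cdots\times\sigma_{n}$ of $\Gamma^{n}$ that are partially but not fully contained in $SF_{n}(\Gamma,1)$. The analogue of Proposition \ref{fully contained cell} holds with an identical argument: a cell is fully contained if and only if its factors have pairwise disjoint closures, i.e., it already lies in $DF_{n}(\Gamma)$. Moreover, if a pair $\sigma_{i},\sigma_{j}$ with $\overline{\sigma_{i}}\cap\overline{\sigma_{j}}\neq\emptyset$ has either factor a $0$-cell, the distance between any pair of points in $\sigma_{i}$ and $\sigma_{j}$ is strictly less than $1$, so $\Sigma\cap SF_{n}(\Gamma,1)=\emptyset$. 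Hence the partially contained cells are exactly those for which every overlapping pair consists of two open edges sharing a common vertex.

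Next I would set up local coordinates. The graph analogue of Proposition \ref{well-defined lift} holds because each connected component of $\bigcup_{i}\overline{\sigma_{i}}$ has combinatorial length at most $n$, which by the two hypotheses on $\Gamma$ embeds as a tree in the universal cover. Parametrizing each edge $e$ by $\hat{x}_{e}\in(0,1)$, each constraint $d(z_{i},z_{j})\geq 1$ arising from an overlapping pair of edges meeting at a vertex $v$ translates, upon orienting both coordinates to measure distance from $v$, into a single linear inequality $\hat{x}_{i}+\hat{x}_{j}\geq 1$. This is the graph analogue of Proposition \ref{local coordinate check boundary} restricted to the ``sum-type'' inequality, and since the barycenter $\textbf{m}$ of $\Sigma$ satisfies $\hat{x}_{i}=\tfrac{1}{2}$ for every edge coordinate, every associated inequality is weakly satisfied, giving the analogue of Proposition \ref{barycenter in square}.

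With these pieces in place I would replicate the retraction of Lemma \ref{retract partially contained cell no boundary} on each partially contained cell. Choose a ``bad point'' $\textbf{m}-\textbf{b}$, where $\textbf{b}$ is a small positive multiple of the vector recording which coordinates of $\hat{\textbf{m}}$ come from edge factors; this point strictly violates every associated inequality. Push each $\textbf{z}\in\overline{\Sigma}\cap SF_{n}(\Gamma,1)$ outward along the ray from $\textbf{m}-\textbf{b}$ through $\textbf{z}$ until it first meets $\partial\overline{\Sigma}$. The key verification, exactly parallel to the surface case, is that along the ray $\textbf{z}_{t}=\textbf{z}+t(\textbf{z}-\textbf{m}+\textbf{b})$ one has
\[
\hat{x}_{i,t}+\hat{x}_{j,t}=(\hat{x}_{i}+\hat{x}_{j})+t\bigl((\hat{x}_{i}+\hat{x}_{j}-1)+(\hat{b}_{i}+\hat{b}_{j})\bigr)\geq 1,
\]
since $\hat{x}_{i}+\hat{x}_{j}\geq 1$, $\hat{m}_{i}+\hat{m}_{j}=1$, and $\hat{b}_{i}+\hat{b}_{j}>0$ (because both factors are edges). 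Ordering the partially contained cells by non-increasing dimension and concatenating these retractions, as in the proof of Theorem \ref{square is discrete}, yields the desired deformation retraction of $SF_{n}(\Gamma,1)$ onto $DF_{n}(\Gamma)$.

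The main obstacle will be the combinatorial bookkeeping when a single edge $e$ participates in overlapping-pair constraints at both of its endpoints $v,v'$, so that the same coordinate $\hat{x}_{e}$ must be read both as ``distance from $v$'' and as ``$1$ minus distance from $v'$''. The hypotheses that distinct essential vertices are separated by paths of length at least $n+1$ and that every homotopically essential loop has length at least $n+1$ are used precisely to guarantee that $\bigcup_{i}\overline{\sigma_{i}}$ lifts to a tree in the universal cover, so the two resulting inequalities on $\hat{x}_{e}$ are globally consistent and the barycenter simultaneously satisfies them all. Without these hypotheses a short loop could produce incompatible inequalities of the form $\hat{x}_{i}+\hat{x}_{j}\geq 1$ and $(1-\hat{x}_{i})+\hat{x}_{k}\geq 1$, and the barycenter would fail to lie in $SF_{n}(\Gamma,1)$, breaking the argument at its foundational step.
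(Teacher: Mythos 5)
Your proposal follows the paper's own sketch of this lemma step for step---it is the same approach, fleshing out the remark in Section~\ref{remarks and future directions} that points to the methods of Section~\ref{discrete configuration spaces} and to Kim's thesis---but the paragraph you flag as the ``main obstacle'' does not resolve it correctly, and the missing care there is a real gap.

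The claim that $\textbf{m}-\textbf{b}$ strictly violates every associated inequality is false under your choice of $\textbf{b}$. Orienting ``both coordinates to measure distance from $v$'' is a per-constraint choice, not a global one: an edge $e$ constrained at both endpoints $v$ and $v'$ can have its coordinate $\hat{x}_{e}$ measured from only one of them, so the constraint at the other endpoint reads $\hat{x}_{k}-\hat{x}_{e}\geq 0$ rather than $\hat{x}_{k}+\hat{x}_{e}\geq 1$. With $\textbf{b}$ a positive multiple of the edge-factor indicator, $\textbf{m}-\textbf{b}$ satisfies those difference-type constraints with equality rather than violating them, and your displayed ray verification, which treats only the sum-type case, does not address them. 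Your diagnosis of the failure mode is also wrong: for $\Sigma=e_{1}\times\cdots\times e_{n}$ in the cycle graph $C_{n}$ the barycenter (all edge midpoints) \emph{does} lie in $SF_{n}(C_{n},1)$; what actually goes wrong is that the constraints force $\overline{\Sigma}\cap SF_{n}$ to be the diagonal $\hat{x}_{1}=\cdots=\hat{x}_{n}$, so $\textbf{m}-\textbf{b}$ itself lands in $SF_{n}$, the ray map is ill-defined there, and a segment does not retract onto its two endpoints. The hypotheses are needed precisely to make $\bigcup_{i}\overline{\sigma_{i}}$ a forest and rule out such degenerate cells. Once you have a forest, the fix is to root each component at a vertex where two factors actually meet, measure each $\hat{x}_{e}$ from the root-side endpoint, and observe that every constraint is then either $\hat{x}_{i}+\hat{x}_{j}\geq 1$ or $\hat{x}_{\text{child}}\geq\hat{x}_{\text{parent}}$ (never $\hat{x}_{i}+\hat{x}_{j}\leq 1$); the indicator bad point strictly violates the sum-type constraint at the root, hence lies outside $SF_{n}$, and the difference-type constraints are preserved along the ray since $\hat{b}_{\text{child}}-\hat{b}_{\text{parent}}=0$. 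Spelling this out closes the gap and is exactly the bookkeeping the paper's sketch leaves to the reader.
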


Again, one can obtain such a result via the methods of Section \ref{discrete configuration spaces}.
Indeed, Kim used similar techniques to prove a strengthened version of this lemma for configurations of squares on trees that removes condition $1$---condition $2$ is vacuously true in the tree case---\cite[Theorem 3.2.10]{kim2022configuration}.
Namely, one can find inequalities on the local coordinates of the points in a square configuration as in Proposition \ref{local coordinate check boundary}.
Note, that if $\Sigma=\sigma_{1}\times\cdots\times \sigma_{m}$ is partially contained in $SF_{m}(\Gamma)$, then its barycenter $\textbf{m}$ is in $SF_{m}(\Gamma)$, and one can use this fact to find a maximally bad configuration $\textbf{mb}$ in $\Sigma$ as in Lemma \ref{retract partially contained cell no boundary}. 
Pushing any configuration $\textbf{z}$ in $\overline{\Sigma}\cap SF_{n}(\Gamma)$ along the ray starting at $\textbf{mb}$ yields a deformation retraction from $\overline{\Sigma}\cap SF_{n}(\Gamma)$ onto $\partial \overline{\Sigma}\cap SF_{n}(\Gamma)$.
Performing these deformation retractions in sequence as in Theorem \ref{square is discrete} establishes the lemma, and together the deformation retractions of Lemmas \ref{point is square graphs} and \ref{square is discrete graph} yield a deformation retraction of $F_{n}(\Gamma)$ onto $DF_{n}(\Gamma)$, constructively proving Theorem \ref{Abrams theorem}.

Now that we have shown that we can recover an explicit deformation retraction proving Theorem \ref{Abrams theorem}, we use graph intuition to argue that our complexes $K_{g,b}(n)$ are optimal, i.e., are as small as possible for such a discretization.

\subsection{Optimality}
Though we do not give a rigorous proof that our constraint that $m\le n$ is optimal, we give a heuristic argument inspired by graph configuration spaces for the sharpness of this bound.
This argument also suggests that similar bounds would be needed for any model for $F_{m}(\Sigma)$ arising from a square-tiling of a surface $\Sigma$.

In Abrams's Theorem 2.1 \cite{abrams2000configuration} there are two conditions on the size of $\Gamma$:
\begin{enumerate}
\item each path connecting distinct essential vertices of $\Gamma$ has length at least $n+1$, and
    \item each homotopically essential path connecting a vertex to itself has length at least $n+1$.
\end{enumerate}
While the first condition can be tightened---see Kim, Ko, and Park \cite[Theorem 2.4]{ko2012characteristics}, Prue and Scrimshaw \cite[Theorem 3.2]{prue2014abrams}, and the previous subsection---the second cannot.
To see this, let $C_{n}$ denote cycle graph on $n$ vertices.
The complex $DF_{n}(C_{n})$ consists of $n!$ disjoint $0$-cells, one for each way to number the vertices of $C_{n}$.
On the other hand, $F_{n}(C_{n})$ is homotopy equivalent to $(n-1)!$ disjoint circles, corresponding to the $(n-1)!$ ways to place $n$ labeled points on a circle.
The two spaces are not homotopy equivalent, and inclusion of $C_{n}$ into an arbitrary graph $\Gamma$ shows the strictness of the second condition.

This problem of lacking a free higher dimensional cell to move a point from one $0$-cell to another, rears it head in the surface setting as well.
To see this, consider the $n-1$ cycle in $F_{n}(\R^{2})$ arising from a solar system on $n$ planets, i.e., the cycle that corresponds to the points $z_{1}$ and $z_{2}$ orbiting each other, the point $z_{3}$ orbiting this pair independently, the point $z_{4}$ orbiting this triple independently, and so on and so forth.
Converting these points to unit-squares, we see that the corresponding cycle needs at least an $n\times n$ square of space.
It follows that the centers of these squares must occupy at least an $(n-1)\times (n-1)$ square.
With this in mind, we note that in order to move this square about $K$, the open $1$-neighborhood about such an $(n-1)\times (n-1)$ square must be contractible or else one would run into a problem akin to that of configurations on the cycle graph.
As such, we need the injectivity radius of a model to be at least $\frac{n+1}{2}$.
Our models $K_{g,0}(n)$ satisfy this requirement optimally; moreover, one can check that $K_{g,0}(0)$ is a minimal tiling of $\Sigma_{g, 0}$, proving that $K_{g,0}(n)$ is an optimal starting point.

Replacing $K_{g,0}(0)$ with a model $K'_{g,0}(0)$ that has more singular points might allow one to use a coarser refinement $K'_{g,0}(n')$ when constructing a discrete model for $F_{m}(\Sigma_{g,0})$, as the added singularities might obviate any concerns about short geodesics.
Indeed, we suspect that distinct singularities in $K'_{g,0}(n')$ would only need to be a Chebyshev distance $n-1$ apart,
so it might be the case that $F_{m}(\Sigma_{g,0})$ is homotopy equivalent to $DF_{m}\big(K'_{g,0}(n-2)\big)$.
That said, $K'_{g,0}(0)$ would have more $2$-cells than $K_{g,0}(0)$, and the extra efficiency gained by a marginally coarser refinement does not outweigh this initial cost.
As such, we believe that the square-tiled surfaces $K_{g,0}(n)$ and $K_{g,1}(n)$ yield optimal models given our method of construction.

\subsection{Higher Dimensions}

Given Abrams's and our models, we believe that similar models should exist for the ordered configuration space of points in any pure dimensional cube complex with a well-defined frame---one should convince themselves that we can obtain models for $F_{m}(\Sigma_{g,b})$, with $b$ any non-negative integer, by sufficiently subdividing $K_{g,0}(n)$ and removing $2$-cells.
As such, we make the following conjecture:

\begin{conj}\label{higher dimensions}
Let $K$ be a cube complex such that if $i\le k$, then every $i$-cube in $K$ is the face of some $k$-cube.
Moreover, assume that $K$ is framed away from some codimension-$2$ subspace.
If $K(n)$ denotes the $n$-fold subdivision of $K$, then, for all $m\le n$, the ordered configuration space of $m$ points in $K(n)$ is homotopy equivalent to the $m^{\text{th}}$-discrete ordered configuration space of $K(n)$.
\end{conj}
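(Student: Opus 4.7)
The plan is to follow the two-step strategy that proves Theorem \ref{point is discrete}: first show that $F_{m}(K(n))$ deformation retracts onto a configuration space $SF_{m}(K(n),1)$ of unit $l^{\infty}$-cubes, then show that this cube configuration space retracts onto $DF_{m}(K(n))$. The framing hypothesis on $K$ away from a codimension-$2$ singular locus $S$ is exactly what is needed to define the $l^{\infty}$-metric and the corresponding $d$-cube configuration spaces in higher dimensions, while the purity-of-dimension condition ensures that top-dimensional cells are always available to accommodate the motion of a cube.

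The first technical ingredient I would develop is a higher-dimensional analog of the polar-coordinate Lemmas \ref{section of small set no boundary} and \ref{section of small set with boundary}. Near a stratum of $S$ the local picture is a branched cover of $\R^{k}$ ramified over an $\R^{k-2}$-flat, and gluing such local models produces a development map $\pi\colon \tilde{K}(n)\to K(n)$. The same contractibility-and-diameter hypothesis on $U\subset K(n)$ should yield a continuous isometric section $s\colon U\to \tilde{K}(n)$, since any non-trivial failure of well-definedness would force a non-contractible short loop around $S$. With this in hand I would define the tautological function $\theta_{m}$ using the particular distances $d_{i,j}$ and $d_{i}$, and refine the product cell structure on $(K(n))^{m}$ by finitely many hyperplanes that record which face of an $l^{\infty}$-cube realizes a given pairwise or boundary distance, generalizing Propositions \ref{distanct function between two points is affine Morse--Bott}--\ref{tautological function is affine Morse--Bott}.

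Once the development map and the affine Morse--Bott structure are established, the cone-and-flow argument of Proposition \ref{move points away from boundary} and Lemma \ref{increase tautological function} should carry over verbatim: on a small neighborhood of any $\textbf{z}$ with $\theta_{m}(\textbf{z})<\tfrac{1}{2}$, radially dilating the lifts of the points of $\textbf{z}$ away from the preimage of $S$ produces a smooth vector field landing in $\text{Cone}_{\textbf{z}'}(\theta_{m})$, and a partition-of-unity gluing yields the deformation retraction onto $SF_{m}(K(n),1)$. For the discretization step I would reprove Lemma \ref{retract partially contained cell no boundary} in this generality: a partially contained product cell $\Sigma=\sigma_{1}\times\cdots\times\sigma_{m}$ lifts to $\tilde{K}(n)^{m}$, and the conditions of Proposition \ref{local coordinate check boundary} generalize to a family of linear (and absolute-value) inequalities, one per coordinate axis for each pair of cells with overlapping closures. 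The barycenter of $\Sigma$ still lies in $SF_{m}(K(n),1)$, and retracting away from the ``maximally bad'' point $\textbf{m}-\textbf{b}$ constructed as in Lemma \ref{retract partially contained cell no boundary} should give the retraction cell-by-cell.

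The hard part will be the combinatorial analysis near $S$. In the surface case there was a single cone point of total angle $(2g-1)\cdot 2\pi$ and the enumeration of pairs of cells whose closures can meet fit into the eight cases of Figure \ref{fig:cellsofdistancezero}; in higher dimensions the local normal data along $S$ can be arbitrary, and the classification of pairs $(\sigma_{i},\sigma_{j})$ admitting a point at $l^{\infty}$-distance at least $1$ proliferates considerably. Writing down an exhaustive higher-dimensional analog of Proposition \ref{local coordinate check boundary} and then verifying that the straight-line flow from $\textbf{m}-\textbf{b}$ simultaneously preserves every such inequality is where the genuine work lies. A natural first attack is to treat the case in which $K$ is a translation $k$-manifold with isolated cone-type singular strata, where the local picture is uniform; the general case should then reduce to this via local coordinate patches and the same partition-of-unity machinery used in Lemma \ref{deformation retract for non crit values}.
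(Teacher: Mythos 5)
This statement is a \emph{conjecture} in the paper, not a theorem; the paper offers only heuristic motivation (the analogy with Abrams's theorem and with the surface case) and explicitly does not prove it. There is therefore no proof in the paper to compare against, and your ``proof proposal'' should be read as a research program rather than a completed argument---which is essentially how you have framed it yourself.

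As a program, your sketch is the natural one and correctly mirrors the two-step strategy of Theorems \ref{point is square} and \ref{square is discrete}: first deformation retract $F_{m}(K(n))$ onto a unit-cube configuration space via a tautological function $\theta_{m}$ and a Morse--Bott flow, then collapse cell-by-cell onto $DF_{m}(K(n))$ by pushing away from a ``maximally bad'' point. You also correctly identify the principal bottleneck, namely the combinatorics near the singular locus. That said, there are genuine gaps beyond the ones you flag. First, the hypothesis of the conjecture does \emph{not} assume $K$ is a manifold---only that it is pure dimensional and framed away from codimension $2$---so the local model you invoke near a stratum of $S$, ``a branched cover of $\R^{k}$ ramified over an $\R^{k-2}$-flat,'' need not be the correct picture: links of cells in a pure cube complex can fail to be spheres, in which case even the definition of the $l^{\infty}$-ball and the ``radial dilation away from $S$'' become problematic. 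Second, the surface argument leans crucially on the fact that the singular set is a single point $p$, which serves as a canonical origin for the development map $\pi\colon\tilde{K}(n)\to K(n)$ and for the dilation $\hat{\phi}_{R}$; when $S$ is a positive-dimensional stratified set there is no distinguished center, and constructing a global development on a small contractible $U$ requires more than ruling out short non-contractible loops---one must also control higher local topology (e.g., small essential spheres in a link). Third, the injectivity-radius input (Proposition \ref{injectivity radius}) and its role in bounding the diameter of contact graphs by $n$ has no obvious replacement in the conjectural generality: the $n$-fold subdivision controls short geodesics, but not the diameter of singular strata or the size of the smallest non-contractible configuration. Any serious attack on the conjecture will need to either add manifold and isolated-singularity hypotheses (as you suggest at the end), or supply an entirely new mechanism for the steps that currently use the cone-point picture.
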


We believe that Conjecture \ref{higher dimensions} is a reasonable starting point, though we believe that these constraints on $K$ can be weakened substantially.
Indeed, we claim that the methods of this paper can be used to discretize $F_{m}(S^{2})$ and $F_{m}(N_{g,0})$, where $N_{g,0}$ is the non-orientable surface of genus $g$.
Though cubulations of $N_{g,0}$ cannot be framable, and many of the naive cubulations of $S^{2}$ like the cube, are not framable even away from a set of $0$-cells, there is a method of subdividing the $2$-cells of such a complex to get a complex with well-defined notions of horizontal and vertical away from the vertices.
The fact that one cannot distinguish left from right, or up from down on large subsets of these cube complexes is not an issue to running our machinery.
As such, one can discretize $F_{m}(S^{2})$ and $F_{m}(N_{g,0})$, and this method of preemptively subdividing should be extendable to higher dimensional complexes, yielding a way to discretize the configuration spaces of many manifolds.

\subsection{Homology Calculations and Asymptotics}
Finally, we propose using discrete Morse theory on our model to study the (co)homology groups of surface configuration spaces.
We do so in light of the success of discrete Morse theory in the study of graph and disk configuration spaces.

In the study of graph configuration spaces, perhaps the most used tool is the discrete gradient vector field on Abrams's discrete configuration space constructed by Farley and Sabalka in \cite{farley2005discrete}.
This relatively simple construction has been used numerous times; we list just a few instances:
Farley and Sabalka used this discrete gradient vector field to partially describe the cohomology ring of the unordered configuration space of a tree in \cite{farley2008cohomology}, and Gonz\'{a}lez and Hoekstra-Mendoza furthered these computations in \cite{gonzalez2022cohomology}.
In \cite{ko2012characteristics}, Ko and Park used this discrete gradient vector field to compute the first homology of any unordered graph configuration space, while also determining the first homology of the ordered configuration space of $2$ points in any planar graph.
Scheirer used this discrete gradient vector field to compute the topological complexity of certain (un)ordered graph configuration spaces in \cite{scheirer2018topological}, and  Aguilar-Guzm\'{a}n, Gonz\'{a}lez, and Hoekstra-Mendoza \cite{aguilar2022farley} extended these calculations, finding the sequential topological complexities of ordered graph configuration spaces.
Additionally, Farley and Sabalka used their discrete gradient vector field to find presentations for graph braid groups in \cite{farley2012presentations}, and Ramos \cite{ramos2018stability} further used their discrete gradient vector field to prove that the homologies of these braid groups stabilize.

Discrete gradient vector fields have also been used to great success in the study of disk configuration spaces. 
Alpert found a discrete gradient vector field on $\text{cell}(n,2)$, a discrete combinatorial model for the ordered configuration space of $n$ open unit-diameter disks in the infinite strip of width $2$, and used it to compute a basis for the homology of this space in \cite{alpert2020generalized}.
Building on this, Alpert and Manin \cite{alpert2021configuration1} used discrete Morse theory to find a basis for the homology of the (un)ordered configuration space of open unit-diameter disks in an infinite strip of width $w$.
Discrete Morse theory was also used by Alpert, Kahle, and MacPherson to the calculate asymptotic Betti numbers for the ordered configuration space of open disks in the infinite strip of width $w$ in\cite{alpert2021configuration}, as well as bounds for the asymptotic Betti numbers of the ordered configuration space of square in a rectangle in \cite{alpert2024asymptotic}.
Alpert, Bauer, Kahle, MacPherson, and Spendlove \cite{alpert2023homology} furthered these calculations via discrete Morse theory.

Given the fruitfulness of discrete Morse theory in the study of graph and disk configuration spaces, we ask the following questions:

\begin{question}
Can one provably construct maximal discrete gradient vector fields on the discrete configuration spaces $DF_{m}\big(K_{g,0}(n)\big)$ and $DF_{m}\big(K^{*}_{g,1}(n)\big)$?
\end{question}

Recent work of Gonz\'{a}lez and Gonz\'{a}lez \cite{gonzalez2023algorithmic}
gives hope that such a maximal discrete gradient vector field is not far from reach.
Namely, they describe a method for constructing maximal discrete gradient vector fields on the discrete ordered configuration space of a simplical complex.
Can their methods be adapted to cube complex setting?

Given such a discrete gradient vector field, we hope to use it to compute the asymptotic Betti numbers of ordered surface configuration spaces.
Church, Ellenberg, and Farb proved that Betti numbers of the ordered configuration space of points in a connected non-compact manifold of finite-type of dimension at least $2$ behave polynomially in the number of points, i.e., $\beta_{i}\big(F_{m}(M)\big)=p(m)$, where $p$ is a polynomial of degree at most $2i$ \cite[Theorem 6.4.3]{church2015fi}.
That said, we do not know the order of this polynomial $p(m)$ or the leading coefficient of $p(m)$ for most manifolds $M$, e.g., see \cite[Conjecture 2.10]{pagaria2022asymptotic}, let alone explicit formulae for $\beta_{i}\big(F_{m}(M)\big)$.
With this in mind, we ask

\begin{question}
What is the leading coefficient of $\beta_{i}\big(F_{m}(\Sigma_{g,1})\big)$ as a function of $m$?
\end{question}

We hope that our discrete models yield new approaches to these questions, and give new insights on surface configuration spaces.

\bibliographystyle{amsalpha}
\bibliography{DiscreteConf}

\end{document}